\documentclass[11pt,letterpaper]{amsart}

\usepackage[utf8]{inputenc}
\usepackage{amssymb, amscd, amsmath, epsfig, mathtools}
\usepackage{amsthm}
\usepackage{enumerate}
\usepackage{xcolor}
\usepackage{scalerel}
\usepackage{soul}
\usepackage{tikz-cd}
\usepackage{esint}
\usepackage{cancel}
\usepackage{hyperref}
\usepackage{slashed}
\usepackage{url}
\usepackage{booktabs}
\usepackage{algorithm}

\usepackage[margin=1.0in]{geometry}

\newtheorem{theorem}{Theorem}[section]
\newtheorem*{theorem*}{Theorem}
\newtheorem{definition}{Definition}[section]
\newtheorem{corollary}{Corollary}[section]
\newtheorem*{corollary*}{Corollary}
\newtheorem{lemma}{Lemma}[section]
\newtheorem{proposition}{Proposition}[section]
\theoremstyle{definition}
\newtheorem{remark}{Remark}[section]

\newcommand{\C}{\mathbb C}
\newcommand{\R}{\mathbb R}

\newcommand{\calC}{\mathcal C}

\newcommand{\calL}{\mathcal L}

\begin{document}

\title[Segal-Bargmann Transform in Neural PDEs]{The Segal-Bargmann Transform in Neural Partial Differential Equations}
\author{Jie Xu$^{*}$}
\thanks{$*$First author and the corresponding author.}
\address{Department of Technology, Operations and Statistics, New York University, New York, NY, USA}
\email{jx479@nyu.edu}
\author{Julio Castrill\'on-Cand\'as}
\address{Department of Mathematics and Statistics, Boston University, Boston, MA, USA}
\email{jcandas@bu.edu}
\author{Mark Kon}
\address{Department of Mathematics and Statistics, Boston University, Boston, MA, USA}
\email{mkon@bu.edu}
\author{Qinxun Bai}
\address{Isara Labs, San Francisco, CA, USA}
\email{jerry@isara.io}

\begin{abstract}
We introduce a novel neural operator method based on Segal-Bargmann transform and Bargmann-Fock space for learning the solution operators of partial differential equations (PDEs). The method brings data encoding, latent-space representation, operator learning, and decoding into an unifed framework. From only finitely many observations, we establish a sup-norm reconstruction error of order $ L^{-\frac{1}{2}} $, where L is the number of complex sampling points that are decoupled with the original samplings and observations. The reconstruction error is independent of the spatial dimension and the number of original sampling points. We show that this new mesh-free Segal-Bargmann Neural Operator approach has superior performance compared to existing NO approaches like FNO, BelNet, GNOT, etc.,\ particularly in terms of pointwise control $ \sup_{x}\lvert\hat{u}_{NO} - u_{truth}\rvert $ for solutions of PDEs $ \mathcal{L}u = f $ which provides pointwise accuracy and interpretability as classical numerical solvers. We also demonstrates the reconstructions of the solution operators for higher dimensional nonlinear PDEs, which suggest that our proposed method has great potential to be applied to more challenging PDEs and to even higher spatial dimensions.
\end{abstract}
\maketitle

\section{Introduction}
Let $ \Omega $ be an open subset of $ \R^{n} $. Consider the partial differential equation $ \mathcal{L} u = f \; {\rm in} \; \Omega \subset \R^{n} $ where $ u, f \in L^{2}(\R^{n}; \R) $ and $ L $ is some smooth differential operator, possibly with boundary and initial conditions. In this article, we are interesting the following neural operator learning question to study $ \mathcal{L}^{-1} $:
\medskip

{\bf{Question}}: Given only a finite collection of observations $ \{ u^{(k)}(x_{i}), f^{(k)}(x_{i}) \}_{k = 1}^{K} $ of unknown ground truth pairing of functions $ \{ u^{(k)}, f^{(k)} \}_{k = 1}^{K} $ on finitely many grids $ \{ x_{i} \}_{i = 1}^{N} \subset \R^{n} $, where each $ u^{(k)}(x) $ solves $ \mathcal{L}u^{(k)}(x) = f^{(k)}(x) $ with prescribed boundary conditions. 
Can we learn a neural operator $ G_{NO} $ that takes the sampled values $ \mathbf{f} = (f(x_{1}), \dotso, f(x_{N})) $ of a new input function $ f $ from the prescribed function class, and reconstruct $ \hat{u}(x) : = \mathcal{G}_{NO}(\mathbf{f})(x) $ that approximates the corresponding solution $ u $ of $ \mathcal{L}u = f $ at any query locations $ x \in \Omega $?
\medskip

The classical numerical solvers are designed to address this type of problem, but their computational cost can grow rapidly with spatial dimension; in addition, a new input instance generally requires a separate numerical solve of the underlying PDE. Over the last several decades, neural operator methods with deep learning architecture have been developed to learn the solution map for an entire family of PDE problems~\cite{DeepONet,NeuralOperator}. A summary of representative approaches is given in \S2. 

However, neural operator methods have several major issues: generalization remains a major limitation of neural operators, especially for out-of-distribution inputs, unseen discretizations, or different sampling grids; the curse of dimensionality is still not overcome; existing methods also lack the same level of stability guarantees, rigorous error control, and interpretability as classical numerical solvers. Moreover, a trained neural operator is usually tied to a specific PDE family and parameter regime, so substantial retraining may be required when the underlying equation or physical setting changes. For example, many numerical comparisons only report relative $ L^{2} $ errors, which alone do not control the error uniformly in space; the standard FFT-based implementation FNO \cite{FNO} require sampling over uniform grids, other methods require associated preferred patterns of grids; a full tensor-product Fourier representation requires exponentially many modes as the spatial dimension increases, although with a fixed cutoff in each coordinate; in the standard DeepONet architecture, the latent basis is learned implicitly by the trunk network rather than prescribed from an analytically interpretable function space, which can make the resulting representation difficult to interpret. 

In this article, we introduce a novel neural operator method based on the Segal-Bargmann transform, where the learning process is carried out in a new space with good properties. In addition, we also show that the combination of the Segal-Bargmann transform and the RKHS approximation serves as a good encoder and a finite dimensional representation of the original discrete data that are coming from evaluation representations of functions. In addition to the classical $ L^{2} $-type convergence rate, we give an $ L^{\infty} $-type convergence rate. 

The Segal-Bargmann transform was initially introduced by the independent work of Irving Segal~\cite{Segal1962} and
Valentine Bargmann~\cite{Bargmann} in the early 1960s, where Segal gave the general framework on Hilbert spaces, and Bargmann published the explicit
integral formula in 1961~\cite{HallHolomorphic2000}.
It maps square-integrable functions on $\mathbb{R}^n$ unitarily onto the
Bargmann-Fock space of entire functions on $\mathbb{C}^n$ with a Gaussian-weighted
norm~\cite{Bargmann}.
Its principal applications include coherent-state representations and quantization
in quantum mechanics for detailed analysis of annihilation and creation operators acting on a many-particle quantum state \cite{HallHolomorphic2000}, as well as time-frequency analysis through its relation to
the Gaussian-windowed short-time Fourier transform~\cite{Grochenig2001}.
Phase space is natural here because, up to scaling and sign conventions, the real
and imaginary coordinates label the position and momentum (or location and
frequency) of Gaussian wave packets, allowing the transform to describe where
oscillatory features occur as well as their frequencies~\cite{HallHolomorphic2000,Grochenig2001}.

We denote by $ \mathcal{F}_{a}^{2}(\C^{n}) $ the Bargmann-Fock space. We also denote by $ \mathcal{B}_{a} : L^{2}(\R^{n}; \C) \rightarrow \mathcal{F}_{a}^{2}(\C^{n}) $ and $ \mathcal{B}_{a}^{-1} :  \mathcal{F}_{a}^{2}(\C^{n}) \rightarrow L^{2}(\R^{n}; \C) $ the Segal-Bargmann transform and inverse-Segal-Bargmann transform, respectively. The precise definitions will be given in \S3. There is no difference if we restrict the inputs to real-valued
functions. In contrast with the current neural operator approaches, we would like to emphasize the advantages of our Segal-Bargmann neural operator approach:

(a) When $ f \in L^{2}(\R^{n}; \R) \cap C^{\infty}(\R^{n}; \R) $ and both {\it{Assumption A}} and {\it{Assumption B}} given in \S5 hold, Theorem \ref{REV2:thm2} says the pointwise difference between the estimate $ \hat{u} = \mathcal{G}_{NO}(\mathbf{f}) $ and the ground truth $ u $ satisfies the dimension-free convergence rate
\begin{equation*}
   \mathbb{E} \left[ \sup_{x \in \R^{n}} \lvert \hat{u} - u \rvert \right]= O\left(L^{-\frac{1}{2}} \right)
    \end{equation*}
with standard assumptions of the universal approximation theorem of neural operators on Hilbert spaces. Here $ L $ is the number of complex sampling points $ \{ z_{l} \}_{l = 1}^{L} \subset \C^{n} $, and is independent of the number of original sampling points $ \{ x_{i} \}_{i = 1}^{N} $. This canonical Monte-Carlo convergence rate is quite general in the sense that it is applicable to almost all partial differential equations, and has almost no requirement of the original grids. One of the advantages of this Segal-Bargmann NO approach is that we can select our complex grids as ``probes" or ``detectors" freely.

(b) {\it{Assumption A}} is too restrictive. If we drop the {\it{Assumption A}} and assume that $ f \in H^{s}(\R^{n}; \R) \cap \calC^{\infty}(\R^{n}; \R) $. If $ s > \frac{n}{2} $, then we give the dimension-free pointwise convergence rate
\begin{equation*}
\mathbb{E} \left[ \sup_{x \in \R^{n}} \lvert \hat{u} - u \rvert \right]= O\left(L^{-\frac{1}{2}} \right)
\end{equation*}
in Theorem \ref{REV2:thm4}, provided that {\it{Assumption B}} holds. If $ s < \frac{n}{2} $, then a natural consequence of Theorem \ref{REV2:thm3} and universal approximation theorem of neural operators will imply that
\begin{equation*}
    \mathbb{E} \left[ \lVert u - \hat{u} \rVert_{L^{2}(\R^{n}; \R)} \right]= O\left(L^{-\frac{s}{2n}} \right).
\end{equation*}
We give a theorem in the Appendix, which demonstrates that our {\it{Assumption B}} can be achieved with some hypotheses and is therefore a reasonable assumption in general.

(c) The $ L^{-\frac{1}{2}} $ reconstruction errors are dimension-free under the stated hypotheses, which indicates that our Segal-Bargmann neural operator approach partially overcomes the curse of dimensionality. In principle, we can approach to the true solution of the PDE by increasing the complex grids $ L \rightarrow \infty $, with just fixed number of observations on $ \{ x_{i} \}_{i =1}^{N} $. We give numerical results for the highly nonlinear PDE $ -\Delta u + e^{u} + u^{3} = f \; {\rm in} \; \Omega, u = 0 \; {\rm on} \; \partial \Omega $ for $ \Omega \subset \R^{5} $. Based on our best understanding, the state-of-the-art NO approaches for unknown differential operator $ \mathcal{L} $ is spatial dimension $ 3 $.

(d) The Segal-Bargmann neural operator approach integrates the information recovery and data encoding, the latent space representation, the learning process and the data decoding into a unified framework, summarized by the following diagram:
\begin{equation*}
\begin{tikzcd}[ column sep=8em, row sep=5em]
&u \in L^{2} \arrow[d, leftrightarrow, "\approx"] \\    
L^{2} \ni f \arrow[r, "\mathcal{G}_{\theta}", ] \arrow[ur, ur, "\mathcal{L}^{-1}"] \arrow[d, "\mathcal{B}_{a} + \text{Encoder}"] & \hat{u} \in L^{2} \\
\mathcal{F}_{a}^{2} \ni \mathcal{B}_{a}(f) \arrow[r, "\text{Latent Space Rep'n} + \hat{\mathcal{G}}_{\hat{\theta}}"] &  \mathcal{B}_{a}(\hat{u}) \in \mathcal{F}_{a}^{2} \arrow[u, "\mathcal{B}_{a}^{-1} + \text{Decoder}"].
\end{tikzcd}
\end{equation*}

(e) Our algorithm is fast and accurate since our learning process is undertaken in the Bargmann-Fock space, which is an RKHS consisting of complex analytic functions. The RKHS property and the rigidity of holomorphic functions allow us to use small neural network to achieve high accuracy with low computational cost. Our experiments report not only small $ L^{2} $-type errors, but also much smaller $ \mathbb{E}_{X}\left[ \frac{\lvert \hat{u}(X) - u(X)\rvert}{\lvert u(X) \rvert} \right] $ and $ \sup_{x} \frac{\lvert \hat{u}(x) - u(x) \rvert}{\lvert u(x) \rvert} $ errors than the NO baselines considered, i.e.\ we also measure the typical (expected) difference between the estimated solution $ \hat{u} $ and the ground truth $ u $. Our algorithm uses the Gaussian-windowed Fourier-transform nature of the Segal-Bargmann transform, which ensures the stability and high accuracy of our algorithm.

(f) Both the Segal-Bargmann and inverse-Segal-Bargmann transforms are unitary transformations. The key feature of this approach is that we decouple the observations of $ f $ in the original space at $ \{ x_{i} \} \in \R^{n} $ from the learning process with encoding and representation in the new space via new grids $ \{ z_{l} \} \subset \C^{n} $. Precisely speaking, the Segal-Bargmann transform serves not only as a unitary information transformation, but also as an encoder, while the RKHS representation of the encoded data in Bargmann-Fock space is designed as the latent space representation of the original data. The inverse-Segal-Bargmann transform can be designed as an exact decoder at the continuous level.
\medskip

This article is organized as follows: In \S2, we give a comprehensive summary of the state-of-the-art data-driven PDE approaches, especially over the last decade. In \S3, we give the necessary definitions of the Segal-Bargmann transform, the Bargmann-Fock space, the RKHS kernel, and an associated orthonormal basis in the Bargmann-Fock space that will be used in later sections. In \S4, we give an example in \S4.1 of how the Segal-Bargmann transform can convert a partial differential equation with solutions in $ L^{2}(\R^{n}; \R) $ into a new, complex-valued partial differential equation in $ \mathcal{F}_{a}^{2}(\C^{n}) $, where two solutions correspond through the Segal-Bargmann transform. We give the problem setting in \S4.2. In \S4.3, we give a two-step algorithm where we study the encoder $ W $ first, and then study the neural operator in the Bargmann-Fock space. In \S4.4, we give an end-to-end algorithm that combines the study of the encoder and the neural operator together. In \S5, we give theoretical results following the problem setting in \S4. With {\it{Assumption A}} and {\it{Assumption B}} , we show in Theorem \ref{REV2:thm2} that the error bound between the estimate $ \hat{u} $ and the solution $ u $ is dimension-free in the sense of the sup-norm, which is at least at the level of classical Monte-Carlo rate $ O\left( L^{-\frac{1}{2}} \right) $ where $ L $ is the number of the complex sampling points we choose, decoupled from the original real grids. Once we drop the {\it{Assumption A}} and consider the most general input $ f $ and solution $ u $, we show in Theorem \ref{REV2:thm4} that the same conclusion holds by assuming that $ f $ and $ u $ belong to suitable Sobolev spaces. In \S6, we give numerical results for 2D Darcy flow, 1D Burgers' equation 
with appropriate initial and boundary conditions. For both equations above, we also provide benchmark comparisons with established methods such as FNO, GNOT, Belnet, etc.\ for both uniform grids and scattered points. We also give numerical results for two nonlinear PDEs in spatial dimension $ 5 $. 
\medskip

\section{Related Work in Neural PDEs}
Our Segal-Bargmann neural operator approach is a neural operator that uniquely integrates the Segal-Bargmann encoding, operator learning in a finite-dimensional Bargmann-Fock RKHS space, and inverse Segal-Bargmann decoding into a unified framework for learning PDE solution operators. 

Over the past decade, learning-based PDE solvers have been significantly developed along two main directions:
approximating the solution of an individual problem and learning the operator that maps
coefficients, forcing terms, or initial and boundary conditions to solutions.
These approaches differ in their representations, physical constraints, and use of
numerical algorithms.

\subsection{Physics-informed neural networks (PINNs)}
PINNs represent the solution with a neural network and combine equation residuals,
initial and boundary conditions, and observations in a common loss, supporting both
forward simulation and parameter identification~\cite{PINN}.
Variational and weak formulations use integral constraints to reduce reliance on
pointwise high-order derivatives, while domain decomposition assigns local networks
to different regions~\cite{VPINN,WAN,XPINN,FBPINN}.
Studies of gradient imbalance and neural tangent kernels explain why loss components
can converge at different rates, motivating adaptive weighting and improved optimization~\cite{GradientPathology,PINNNTK}.
B-PINNs introduce Bayesian inference to quantify uncertainty in solutions and inferred
parameters under noisy observations~\cite{BPINN}.

\subsection{DeepONet and reduced-order operator learning}
Operator learning trains a model across input functions so that new problem instances
can be evaluated without solving each one from scratch.
DeepONet combines a branch network encoding the input function with a trunk network
encoding output coordinates to predict a continuous solution~\cite{DeepONet}.
MIONet extends this construction to multiple input functions, while POD-DeepONet uses
a reduced output basis extracted from solution data~\cite{MIONet,PODDeepONet}.
PCA-Net compresses both input and output functions and learns a map between their
coefficients~\cite{PCANet}.
Their accuracy depends on whether these finite representations preserve the main
variations in the solution.

\subsection{Spectral neural operators}
Spectral architectures provide global interactions through transforms.
The Fourier neural operator (FNO), a widely used architecture, learns Fourier multipliers
and combines them with local mappings and nonlinear activations~\cite{FNO}.
F-FNO factorizes spectral operations to reduce computational cost, whereas multiwavelet
operators and WNO use spatial localization and scale decomposition to represent
multiscale structure~\cite{FFNO,MWT,WNO}.
The Laplace neural operator (LNO) represents transient responses through poles and
residues in the Laplace domain~\cite{LNO}.
The choice of transform reflects the spatial and temporal structure of the solution.

\subsection{Graph networks, geometry, and attention}
Irregular meshes and varying domains require models that account for spatial geometry.
MeshGraphNets learns local interactions and time evolution through mesh-based message
passing; Geo-FNO and GINO connect physical domains to latent representations through
learned deformations or graph operators~\cite{MeshGraphNets,GeoFNO,GINO}.
Attention offers input-dependent interactions: Galerkin Transformer relates attention
to integral approximation, while Transolver aggregates physical states to reduce the
cost of interactions over large point sets~\cite{GalerkinTransformer,Transolver}.
These methods support solution prediction beyond regular grids, including complex
engineering geometries.

\subsection{Physics-informed neural operators (PINO)}
Physics-informed operator learning imposes governing equations across a family of inputs
to reduce dependence on reference solutions.
Early physics-constrained convolutional surrogates already explored parametric PDE
learning without paired solution data~\cite{PhysicsConstrainedCNN}.
PI-DeepONet incorporates equation and boundary residuals into DeepONet, while PINO
combines data fitting with physics constraints, allowing coarse-grid solution data and
fine-grid residual evaluation~\cite{PIDeepONet,PINO}.
Variational DeepONet and VINO instead formulate operator training through energy or
weak-form objectives~\cite{VariationalDeepONet,VINO}.
This approach combines rapid operator evaluation with known physical laws, although
residual discretization and loss balancing remain consequential training choices.

\subsection{Kernel methods and random features}
Kernel methods provide alternative representations for both PDE solutions and solution
operators. Gaussian process methods impose PDE constraints in a kernel space, while
operator kernel regression learns solution maps from finite observations of input and
output functions~\cite{GPNonlinearPDE,KernelCompetitive}.
Random features reduce the representation to a finite coefficient problem:
operator random-feature models learn maps between function spaces, and the Random
Feature Method combines local features with domain decomposition for PDE solving~\cite{RFOperator,RFMPDE}.
Fitting expansion coefficients simplifies optimization, although matrix size,
conditioning, and nonlinear iterations can still be costly.

\subsection{Learning within numerical solvers}
Hybrid methods retain numerical algorithms and learn components that are expensive
or difficult to design.
Data-driven discretizations estimate differential operators on coarse grids;
Solver-in-the-Loop includes numerical evolution during training; and HINTS combines
neural operators with classical relaxation to reduce complementary error components~\cite{DataDrivenDiscretization,SolverInLoop,HINTS}.
For high-dimensional PDEs, deep backward schemes and deep splitting use stochastic
representations and time discretization to obtain sequences of neural regression
problems~\cite{DBDP,DeepSplitting}.
The network thus learns a specific numerical subproblem within an established
solution procedure.

\subsection{Generative models for PDEs}
Generative methods model distributions of solution fields, supporting inference from
partial observations and uncertain data.
DiffusionPDE combines diffusion priors with observation and physics constraints, while
physics-informed diffusion models incorporate governing equations directly into
training~\cite{DiffusionPDE,PIDiffusion}.
PDE-Refiner uses iterative denoising to improve long autoregressive rollouts~\cite{PDERefiner}.
These approaches extend prediction to conditional sampling and iterative refinement,
with additional computational costs from repeated generation steps.

\subsection{PDE pretraining and foundation models}
Recent work pretrains shared models on multiple equations and physical datasets before
adapting them to specific tasks.
DPOT and Poseidon study representation learning and transfer across PDEs, while the
PDEformer family also encodes equations through symbolic or computational graph
representations~\cite{DPOT,Poseidon,PDEformer2}.
ICON uses input--output function pairs as context, enabling adaptation to new operator
tasks through examples~\cite{ICON}.
These models seek representations that can be reused across equations, conditions,
and data distributions.
\medskip

\section{The Segal-Bargmann Transform}
In this section, we introduce the Segal-Bargmann transform that will be used later. For more details we refer to \cite[Chapter 1]{Folland}. Throughout this article, our domain is either $ \R^{n} $ or open subsets of $ \R^{n} $. We also assume the familiarity of the standard $ L^{2} $-type Sobolev spaces $ H^{s}(\R^{n}; \R) $. We will repeatedly use the standard Gamma function $ \Gamma(s) $, the incomplete upper and lower Gamma functions $ \gamma(s, x) $ and $ \Gamma(s, x) $, respectively. 

Precisely speaking, the Segal-Bargmann transform is a Guassian-weighted Fourier transform with complex frequency. Throughout this article, we will interchangeably use Segal-Bargmann transform and $ a $-Segal-Bargmann transform. The latter one is used when we want to emphasize the importance of the parameter $ a $. All definitions below work equally well for complex-valued $ L^{2} $-functions $ f \in L^{2}(\R^{n}; \C) $. 
\begin{definition}\label{SB:de1}
Let $ L^{2}(\R^{n}; \R) $ be the collection of complex-valued square integrable measurable functions. We denote by the $ a $-Segal-Bargmann transform:
\begin{equation}\label{SB:eqn1}
    L^{2}(\R^{n}; \R) \ni f \mapsto \mathcal{B}_{a}(f)(z) : = \left( \frac{2a}{\pi} \right)^{\frac{n}{4}} \int_{\mathbb{R}^n} \exp \left( -\frac{a}{2} z \cdot z - a x \cdot x + 2a z \cdot x \right) f(x) dx.
\end{equation}
The function
\begin{equation*}
    A_{a}(z, x) = \left( \frac{2a}{\pi} \right)^{\frac{n}{4}} \exp \left( -\frac{a}{2} z \cdot z - a x \cdot x + 2a z \cdot x \right)
\end{equation*}
is called $ a $-Segal-Bargmann kernel.
\end{definition}
It is constructed by Segal and Bargmann that the function $ B_{a}(f) $ is an entire function on $ \C^{n} $. Furthermore, it is sitting in a Hilbert space, called Bargmann-Fock space.
\begin{definition}\label{SB:def2}
The Bargmann-Fock space $ \mathcal{F}_{a}^{2}(\C^{n}) $ is defined to be the collection of entire functions such that
\begin{equation*}
\mathcal{F}_{a}^{2}(\mathbb{C}^{n}) : = \lbrace F \in \mathcal{O}(\mathbb{C}^{n}) : \lVert f \rVert_{\mathcal{F}_{a}^{2}}^{2} = \langle F, F \rangle_{\mathcal{F}_{a}^{2}(\mathbb{C}^{n})} < \infty \rbrace
\end{equation*}
where the Bargmann-Fock inner product is defined as
\begin{equation*}
    \langle F, G \rangle_{\mathcal{F}_{a}^{2}(\mathbb{C}^{n})} : = \int_{\mathbb{C}^{n}} F(z) \overline{G(z)} d\mu_{a}(z) : =  \left( \frac{a}{\pi} \right)^{n} \int_{\mathbb{C}^{n}} F(z) \overline{G(z)} e^{-a \lvert z \rvert^{2}} d\mu(z).
\end{equation*}
Here $ d\mu_{a}(z) = \left( \frac{a}{\pi} \right)^{n} e^{-a \lvert z \rvert^{2}} d\mu(z) $ is the $ a $-Gaussian-weighted complex measure, and $ d\mu(z) $ is the standard complex Lebesgue measure. When $ n = 1 $, $ d\mu(z) = dxdy $ for $ z = x + \sqrt{-1} y $.
\end{definition}
The Segal-Bargmann transform is a unitary transform in the sense that
\begin{equation*}
    F = \mathcal{B}_{a}(f), G = \mathcal{B}_{a}(g) \Rightarrow \lVert f \rVert_{L^{2}(\R^{n}; \R)} = \lVert F \rVert_{\mathcal{F}_{a}^{2}(\C^{n})}, \langle F, G \rangle_{\mathcal{F}_{a}^{2}(\C^{n})} = \langle f, g \rangle_{L^{2}(\R^{n}; \R)}.
\end{equation*}
Therefore, the $ a $-Segal-Bargmann transform is an isomorphism between $ L^{2}(\R^{n}; \R) $ and $ \mathcal{F}_{a}^{2}(\C^{n}) $. The inverse $ a $-Segal-Bargmann transform is defined below.
\begin{definition}\label{SB:def3}
The inverse $ a $-Segal-Bargmann transform is
\begin{equation*}
    \mathcal{B}_{a}^{-1}(F)(x) : = \left( \frac{2a}{\pi} \right)^{\frac{n}{4}} \cdot \left( \frac{a}{\pi} \right)^{n} \int_{\mathbb{C}^n} e^{-\frac{a}{2} \bar{z} \cdot \bar{z} - a x \cdot x + 2a \bar{z} \cdot x} F(z) e^{-a\lvert z \rvert^{2}} d\mu(z)
\end{equation*}
such that
\begin{equation*}
    \mathcal{B}_{a}(\mathcal{B}_{a}^{-1}) = Id_{\mathcal{F}_{a}^{2}(\C^{n})}, \mathcal{B}_{a}^{-1}(\mathcal{B}_{a}) = Id_{L^{2}(\R^{n}; \R)}.
\end{equation*}
\end{definition}
It is well-known that the Bargmann-Fock space is an RKHS space, i.e. the point evaluation functional is a continuous functional on $ \mathcal{F}_{a}^{2}(\C^{n}) $. We now state a few more facts about the space $ \mathcal{F}_{a}^{2}(\C^{n}) $.
\begin{definition}\label{Def4}
There exists an orthonormal basis $ \{ e_{\beta}^{a} \}_{\beta} $ of $\mathcal{F}_{a}^{2}(\C^{n})$ with
\begin{equation*}
    e_{\beta}^{a}(z) = \sqrt{\frac{a^{\lvert \beta \rvert}}{\beta !}} z^{\beta}, \beta \in \mathbb{N}_{0}^{n}.
\end{equation*}
This orthonormal basis associates with an orthonormal basis $ \{ h_{\beta}^{a} \}_{\beta} $ in $ L^{2}(\R^{n}; \R) $ due to the isometry of $ a $-Segal-Bargmann transform. The normalized Gaussian-Hermite function $ h_{\beta}^{a} $ is of the form
\begin{equation*}
    \mathcal{B}_{a}^{-1}(e_{\beta}^{a})(x) = h_{\beta}^{a}(x) = \frac{(2a)^{\frac{n}{4}}}{\pi^{\frac{n}{4}}\sqrt{2^{\lvert \beta \rvert}\beta !}} H_{\beta}(\sqrt{2a}x) e^{-\frac{a\lvert x \rvert^{2}}{2}}.
\end{equation*}
Here $ \beta = (\beta_{1}, \dotso, \beta_{n}) $, 
\begin{equation*}
    H_{\beta}(x) = \Pi_{i =1}^{n} H_{\beta_{i}}(x_{i}), x = (x_{1}, \dotso, x_{n}) \in \R^{n}
\end{equation*}
where the $ 1 $-dimensional physicists' Hermitian polynomial of order $ m $ is given by
\begin{equation*}
    H_{m}(y) = (-1)^{m} e^{y^{2}} \frac{d}{dy^{n}} e^{-y^{2}}, y \in \R.
\end{equation*}
The RKHS kernel of the $ a $-Bargmann-Fock space is
\begin{equation*}
    K_{a}(z, w) = e^{a z \cdot \bar{w}} = \sum_{\beta} e_{\beta}^{a}(z) e_{\beta}^{a}(\bar{w}).
\end{equation*}
such that
\begin{equation*}
\langle F(\cdot), K_{a}(\cdot, w) \rangle_{\mathcal{F}_{a}^{2}(\C^{n})} = F(w), \forall F \in \mathcal{F}_{a}^{2}(\C^{n}).
\end{equation*}
\end{definition}
\begin{definition}\label{Def5}
Define the Gauss-Hermite harmonic oscillator
\begin{equation*}
   \mathcal{H}_{\beta}^{a} : = -\Delta + 4a^{2} \lvert x \rvert^{2}, 
\end{equation*}
which satisfies the following eigenvalue equation
\begin{equation*}
   \mathcal{H}_{a} h_{\beta}^{a} = (4a \lvert \beta \rvert + 2a n) h_{\beta}^{a},
\end{equation*}
i.e.\ each $ h_{\beta}^{a} $ is an eigenfunction of the operator $ \mathcal{H}_{a} $ with eigenvalue $ \lambda_{\beta, n} = (4a \lvert \beta \rvert + 2a n) $. 

Define the even order Hermite-Sobolev space
\begin{equation*}
    \mathcal{H}_{a}^{s}(\R^{n}; \R) : = \{ f \in L^{2}(\R^{n}; \R) : \lVert f \rVert_{\mathcal{H}_{a}^{s}(\R^{n}; \R} : = \lVert (I + \mathcal{H}_{a})^{\frac{s}{2}} f \rVert_{L^{2}(\R^{n}; \R)} < \infty, s = 2m \in \mathbb{N} \}.
\end{equation*}
It follows that in the $ L^{2} $-sense
\begin{equation*}
    f = \sum_{\beta} c_{\beta} h_{\beta}^{a} \Rightarrow \lVert f \rVert_{\mathcal{H}_{a}^{s}(\R^{n}; \R)}^{2} = \sum_{\beta}( 1 + \lambda_{\beta, n})^{s}\lvert c_{\beta} \rvert^{2} = \sum_{\beta}(1 + 4a \lvert \beta \rvert + 2an)^{s} \lvert c_{\beta} \rvert^{2}.
\end{equation*}
\end{definition}

\section{Neural PDEs and Algorithms}
In this section, we first give a concrete example of how the Segal-Bargmann transform converts a PDE where the differential operator $ \mathcal{L} $ applies to the orignial $ L^{2}(\R^{n}; \R) $ to a new PDE where the differential operator $ \tilde{G} $ applies to the Bargmann-Fock space $ \mathcal{F}_{a}^{2}(\C^{n}) $. This concrete example indicates that study the original PDE is equivalent to study the new PDE after Segal-Bargmann transform.

We then set the problem and give two algorithms of how to learn the differential operator, or more interestingly, the inverse of the differential operator, by a neural operator that maps between Bargmann-Fock spaces. The theoretical analysis of our approaches, including the convergence rates, are established in \S5 with moderate assumptions.
\medskip

\subsection{Neural PDEs in Bargmann-Fock Space}
We would like to apply the Segal-Bargmann transform to solve partial differential equations with appropriate initial and boundary conditions. As an example, we consider the following elliptic equation
\begin{equation*}
    -\Delta u + u = f \; {\rm in} \; \R^{n}
\end{equation*}
where $ f \in L^{2}(\R^{n}; \R) $. In our scenario, we assume that we have some prior knowledge of $ u $ and $ f $, and we would like to use Neural Operator (NO) to recover the inverse $ (-\Delta + Id)^{-1} $. Instead of the classical methods that construct the NO between the spaces of functions over $ \R^{n} $, we take the Segal-Bargmann transform and consider the differential relation in $ \mathcal{F}_{a}^{2}(\C^{n}) $, where we take advantage of the fact that $ \mathcal{F}_{a}^{2}(\C^{n}) $ is an RKHS space with a designated RKHS kernel.

Let $ (z_{1}, \dotso, z_{n}) $ be coordinates in $ \C^{n} $. Since $ f \in  \calL^{2}(\R^{2}; \R) $, we are legitimate to apply the $ a $-Segal-Bargmann transform on both sides of the PDE, 
\begin{equation*}
    \mathcal{B}_{a}(-\Delta u + u)(z) = \mathcal{B}_{a}(f)(z) \in \mathcal{F}^{2}(\C^{n}).
\end{equation*}
Applying integration by parts on $ \R^{n} $, the $ a $-Segal-Bargmann transform for both sides of the PDE gives
\begin{equation}\label{AL:eqn1}
    (L_{z, n} + 1) \mathcal{B}_{a}(u) : = -\sum_{j = 1}^{n} \left( \frac{\partial^{2}\mathcal{B}(u)}{\partial z_{j}^{2}} + \left(-a + a^{2} z_{j}^{2} - 2z_{j} a \partial_{z_{j}} \right)\mathcal{B}_{a}(u)\right) + \mathcal{B}_{a}(u) = \mathcal{B}_{a}(f).
\end{equation}
Assume that we have inputs and outputs $ \{ f_{k} \} $, $ \{ u_{k} \} $, respectively. Our goal is to learn the solver, i.e. we are seeking for a NO $ \mathcal{G}_{\theta} $ parametrized by $ \theta \in \R^{n} $, and we hope that $ \mathcal{G}_{\theta} f $ solves the PDE, i.e.
\begin{equation*}
    (-\Delta + 1) (\mathcal{G}_{\theta} f) \approx f.
\end{equation*}
In other words, we would like to study $ \mathcal{G}_{\theta} \approx (-\Delta + 1)^{-1} $. With the help of the $ a $-Segal-Bargmann transform, our study diagram can be shown as below:
\begin{equation}\label{AL:eqn1a}
\begin{tikzcd}
    \{ f_{k} \} \subset L^{2}(\R^{n}; \R) \arrow[r, "\mathcal{G}_{\theta}"] \arrow[d, "\mathcal{B}_{a}"] & H^{2}(\R^{n}; \R) \supset \{ u_{k} \} \\
   \{ \mathcal{B}_{a}(f_{k}) \} \subset \mathcal{F}_{a}^{2}(\C^{n}) \arrow[r, "\hat{\mathcal{G}}_{\hat{\theta}}"] & \mathcal{F}_{a}^{2}(\C^{n}) \supset \{ \mathcal{B}_{a}(u_{k}) \} \arrow[u, "\mathcal{B}_{a}^{-1}"].
\end{tikzcd}
\end{equation}
Due to the one-to-one unitary correspondence between $ \mathcal{L}^{2}(\R^{2}) $ and $ \mathcal{F}^{2}(\C^{2}) $, it is equivalent to learn the complexified Neural Operator $ \hat{\mathcal{G}}_{\hat{\theta}} $ instead such that
\begin{equation*}
    (L_{z, n} + 1)\hat{\mathcal{G}}_{\hat{\theta}} \approx Id_{\mathcal{F}_{a}^{2}(\C^{n})}.
\end{equation*}
\medskip
\subsection{The Problem Setting and The Loss of Information of Discrete Segal-Bargmann Transform} Our problem setting is quite general.

Consider the time-independent PDEs
\begin{equation*}
    \mathcal{L}u = f \; {\rm in} \; \Omega \subset \R^{n}, Bu = g \; {\rm on} \; \partial \Omega.
\end{equation*}
Here $ u, f \in L^{2}(\R^{n}; \R), g \in L^{2}(\R^{n - 1}; \R) $, $ \mathcal{L} $ is a smooth differential operator, and $ B $ is a smooth boundary operator. $ \Omega $ is a bounded, open, connected subset of $ \R^{n} $ for simplicity, and $ \partial \Omega $ may have some level of singularity, e.g.\ non-Lipschitz, etc.

We also consider the time-dependent PDEs
\begin{equation*}
    \mathcal{L}_{t} u + \mathcal{L}_{x} u = f \; {\rm in} \; \Omega \times [0, T] \subset \R^{n} \times \R_{t}, B_{x} u = g \; {\rm on} \; \partial \Omega \times [0, T], B_{t}u = u_{0} \; {\rm on} \; \Omega \times \{ 0 \}. 
\end{equation*}
Here $ u, f \in \calC([0, T]; L^{2}(\R^{n}; \R)) $, $ g \in \calC([0, T]; L^{2}(\R^{n - 1}; \R))$, $ u_{9} \in L^{2}(\R^{n}; \R) $, $ \mathcal{L}_{t}, \mathcal{L}_{x}, B_{x}, B_{t} $ are smooth differential and boundary operators, respectively.

Under this basic settings, it is legitimate to take $ a $-Segal-Bargmann transform on both sides of the PDE, which will result in either a local version as the example shows, or at least a superposition.

In this article, we apply machine learning techniques to estimate the inverse of the differential operators $ \mathcal{L}^{-1} $ or $ (\mathcal{L}_{x} + \mathcal{L}_{t})^{-1} $ by neural operators $ \mathcal{G}_{NO} $, where the input is $ f $, and the output is $ u $, provided that the boundary and initial conditions are completely known.

In particular, we only have knowledge of a collection of training set $ (u^{(k)}, f^{(k)})_{k = 1}^{K} $ where we do not know the functions but instead a finite collection of observations $ (u^{(k)}(x_{i}), f^{(k)}(x_{i}))_{i = 1}^{N} $ for each $ k $ only.

When the spatial dimension $ n $ is high, or when the sampling size $ N $ or $ K $ is relatively small, or we have no prior knowledge of how the observations are collected, it is very hard to recover the relation between $ u $ and $ f $ efficiently just within the original space $ \R^{n} $ and associated Banach spaces.

For example, $ f = \sum_{\beta} c_{\beta} h_{\beta}^{a} \in L^{2}(\R^{n}; \R) $ uniquely. An encoder is necessary to recover $ \{ c_{\beta} \} $ from $ \{ f(x_{i}) \} $. However, there is no theoretical guarantee of this step. Nevertheless, the best we can do is to recover $ \hat{c}_{\beta} $ such that $ f(x_{i}) \approx \sum_{\beta} \hat{c}_{\beta} h_{\beta}^{a}(x_{i}) $. This indicates the significant demand of observations if we would like to recover $ f $ everywhere. 

However, we may only have very limited observations over $ \{ x_{i} \}_{i = 1}^{N} $. One of the main purposes of the introduction of the Segal-Bargmann transform and the Bargmann-Fock space is to recover the information of $ f $ by complex ``probes" $ \{ z_{l} \} $, which can be chosen arbitrarily (assume that we can afford the cost).

Not only that, we can also give a concrete latent space representation and then decode the data by taking advantage of the RKHS structure of the Bargmann-Fock space. In addition, the elements of $ \mathcal{F}_{a}^{2}(\C^{n}) $ are entire functions, and the rigidity of complex analyticity ensures the uniqueness of our decoder theoretically.

As we said, the purpose of the Segal-Bargmann encoder is due to the lack of information. For the ground truth $ f \in L^{2}(\R^{n}; \R) $, the unitary transformation that preserves all information of $ f $ is
\begin{equation*}
    \mathcal{B}_{a}(f)(z) = \int_{\R^{n}} A_{a}(z, x) f(x) dx.
\end{equation*}
But in practice, we often only have finitely many observations $ \{ f(x_{i}) \}_{i = 1}^{N} $; in addition, we often have no idea whether the real grids $ \{ x_{i} \}_{i = 1}^{N} $ was deliberately chosen with some pattern or not. 

Clearly, the naive discrete Segal-Bargmann transform
\begin{equation*}
    f \mapsto \frac{1}{N} \sum_{i=1}^{N} A_{a}(z, x_{i}) f(x_{i})
\end{equation*}
is not a good choice. Denote by $ F = \mathcal{B}_{a}(f) $ and suppose that we have chosen a complex sampling $ \{ z_{l} \}_{l = 1}^{N} $, our hope is that we can find good approximations $ \tilde{F}_{N} $ such that
\begin{equation*}
    F(z_{l}) \approx \tilde{F}_{N}(z_{l}), \forall l.
\end{equation*}
If we can recover $ F $, it is equivalent to recover $ f $. We therefore introduce an {\it{information recovery matrix}} $ W = (w_{li}) $ and denote
\begin{equation*}
    \tilde{F}_{N}(z_{l}) : = \frac{1}{N} \sum_{i =1}^{N} w_{li} A_{a}(z_{l}, x_{i})f(x_{i}).
\end{equation*}
Note that the product of $ W $ and $ A = (A_{a}(z_{l}, x_{i})) $ is a Hadamard product. Theoretically, we can apply a nonlinear activation on $ \tilde{\mathbf{F}}_{N} = (\tilde{F}_{N}(z_{1}), \dotso, \tilde{F}_{N}(z_{L}))^{t} $ to recover $ F(z_{l}) $ with zero error when $ L $ is large enough by a version of universal approximation theorem \cite{DU}.

We then use our $ \tilde{F}_{N}(z_{l}) $ to construct the encoder. Intuitively, we improve our learning process in (\ref{AL:eqn1}) via Segal-Bargmann transform as the following diagram shows
\begin{equation}\label{AL:eqn1c}
\begin{tikzcd}
    \{ f_{k} \} \subset L^{2} \arrow[r, "\mathcal{G}_{\theta}"] \arrow[d, "\mathcal{B}_{a}"] & L^{2} \supset \{ u_{k} \} \\
   \{ \mathcal{B}_{a}(f_{k}) \} \subset \mathcal{F}_{a}^{2} \arrow[r, "\hat{\mathcal{G}}_{\hat{\theta}}"] & \mathcal{F}_{a}^{2} \supset \{ \mathcal{B}_{a}(u_{k}) \} \arrow[u, "\mathcal{B}_{a}^{-1}"].
\end{tikzcd} \Rightarrow \begin{tikzcd}
    \{ f_{k} \} \subset L^{2} \arrow[r, "\mathcal{G}_{\theta}"] \arrow[d, "\mathcal{B}_{a} + W "] & L^{2} \supset \{ u_{k} \} \\
   \{ \mathcal{B}_{a}(f_{k}) \} \subset \mathcal{F}_{a}^{2} \arrow[r, "\hat{\mathcal{G}}_{\hat{\theta}}"] & \mathcal{F}_{a}^{2} \supset \{ \mathcal{B}_{a}(u_{k}) \} \arrow[u, "\mathcal{B}_{a}^{-1}"].
\end{tikzcd}
\end{equation}
Therefore, we need to learn both $ W $ and $ \hat{\mathcal{G}}_{\hat{\theta}} $. We give two algorithms below.
\medskip

\subsection{Two-Step Algorithm} 
In this subsection, we give a twp-step algorithm. The first step is to construct a Segal-Bargmann encoder as shown in (\ref{AL:eqn1c}); the second step is to learn the complex version of the differential operator over the Bargmann-Fock space.

{\it{Step 0: Original Input.}} Assume that we have training set $ \{ u^{(k)}, f^{(k)} \} $ with $\mathcal{L} u^{(k)} = f^{(k)} $. Suppose that we have real grids $ \{ x_{i} \}_{i = 1}^{N} $, either from some distribution $ \{ X_{i} \} \overset{i.i.d.}{\sim} \rho $ or not, and we only have evaluation representations
\begin{equation*}
    \{ u^{(k)}(x_{i}), f^{(k)}(x_{i}) \}, i = 1, \dotso, N, k = 1, \dotso, K.
\end{equation*}
\medskip

{\it{Step 1: Complex Grids and Segal-Bargmann Encoder.}} Denote by
\begin{equation*}
    \Lambda_{M} = \lbrace \beta \in \mathbb{N}_{0}^{n} : \lvert \Lambda_{M} \rvert = M \rbrace, A_{a, M}(z, x) = \sum_{\lvert \beta \rvert \leqslant M} e_{\beta}^{a}(z) h_{\beta}^{a}(x),
\end{equation*}
We denote by $ V_{M} = \{ h_{\beta}^{a} : \beta \in \Lambda_{M} \} $ the associated finite collection of orthonormal basis with respect to $ \Lambda_{M} $, and $ B_{M} = \sup_{\beta \in \Lambda_{M}} \lvert \beta \rvert $. Set
\begin{equation*}
    K_{M}(z) : = \sum_{\beta \in \Lambda_{M}} \lvert e_{\beta}^{a}(z) \rvert^{2}, q_{M}(z) = \frac{M}{K_{M}(z)},  d\rho_{a}(z): = \frac{K_{M}(z)}{M} d\mu_{a}(z) = \frac{K_{M}(z)}{M} \left( \frac{a}{\pi} \right)^{n} e^{-a \lvert z \rvert^{2}} d\mu(z).
\end{equation*}
Applying Christoffel-function sampling following Cohen-Migliorati's \cite{CM}, we choose complex grids as random sampling of $ \{ Z_{l} \}_{l = 1}^{\frac{L}{2}} $ for even $ L $  such that
\begin{equation*}
    Z_{1}, \dotso, Z_{L/2} \overset{i.i.d.}{\sim} \rho_{a}.
\end{equation*}
Our complex sampling of even size $ L $ is given by complex conjugacy
\begin{equation*}
Z_{1}, \dotso, Z_{L/2}, \bar{Z}_{1}, \dotso, \bar{Z}_{L/2} = Z_{1}, \dotso, Z_{L/2}, Z_{L/2 + 1}, \dotso, Z_{L}.
\end{equation*}
Let $ \{ z_{l} \}_{l = 1}^{L} $ be the random sampling from the collection of random variables above.  We define
\begin{equation*}
    \tilde{F}_{N}^{k}(z_{l}) : = \frac{1}{N} \sum_{i = 1}^{N} w_{li} A_{a, M}(z_{l}, x_{i})f^{(k)}(x_{i}).
\end{equation*}
Here $ W = (w_{li}) $ is the {\it{information recovery matrix}}. For general $ f $ evaluated on $ \{ x_{i} \} $, we denote by
\begin{equation*}
    \mathbf{g}_{l} = \begin{pmatrix} A_{a, M}(z_{l}, x_{1})f(x_{1}) \\ \vdots \\ A_{a, M}(z_{l}, x_{N})f(x_{N}) \end{pmatrix}, \mathbf{w}_{l} = \begin{pmatrix} w_{l1} \\ \vdots \\ w_{lN} \end{pmatrix}, \mathbf{\mu}_{l} = \mathbb{E}_{f}\big[ \mathbf{g}_{l} \big], \Gamma_{lm} = \mathbb{E}_{f}\left[ (\mathbf{\mu}_{l} - \mathbf{g}_{l})(\mathbf{\mu}_{m} - \mathbf{g}_{m})^{*}\right]
\end{equation*}
where $ (\cdot )^{*} $ means Hermitian transpose. Theoretically, $ W $ is obtained by minimizing the loss
\begin{align*}
    L_{W, theory} & = \frac{\lambda_{1}}{L} \sum_{l = 1}^{L} \lvert \mathbf{\mu}_{l} \rvert^{2} + \frac{\lambda_{2}}{L(L - 1)} \sum_{l \neq m} \lvert \Gamma_{lm} \rvert^{2} + \frac{\lambda_{3}}{L} \sum_{l = 1}^{L} \lvert \Gamma_{ll} \rvert^{2} + \lambda_{4} \lVert W \rVert^{2}
\end{align*}
with regularization factors $ \lambda_{i}, i = 1, \dotso, 4$.Using our data $ \{ f^{(k)}(x_{i}) \} $ and $ \{ z_{l} \} $, we denote by
\begin{align*}
    \eta_{l}^{k} & = \frac{1}{N} \sum_{i = 1}^{N} w_{li} A_{a, M}(z_{l}, x_{i}) f^{(k)}(x_{i}) - f^{(k)}(z_{l}) \\
    \hat{\mathbf{\mu}}_{l} & = \frac{1}{K} \sum_{k = 1}^{K} \eta_{l}^{k}, \hat{\Gamma}_{lm} = \frac{1}{K} \sum_{k = 1}^{K} (\hat{\mathbf{u}}_{l} - \eta_{l}^{k})(\hat{\mathbf{u}}_{m} - q_{M}^{k})^{*},
\end{align*}
we get our practical loss as
\begin{align*}
    L_{W, data} & = \frac{\lambda_{1}}{L} \sum_{l = 1}^{L} \lvert \hat{\mathbf{\mu}}_{l} \rvert^{2} + \frac{\lambda_{2}}{L(L - 1)} \sum_{l \neq m} \lvert \hat{\Gamma}_{lm} \rvert^{2} + \frac{\lambda_{3}}{L} \sum_{l = 1}^{L} \lvert \hat{\Gamma}_{ll} \rvert^{2} + \lambda_{4} \lVert W \rVert^{2}.
\end{align*}
\medskip

{\it{Step 2: Complex Input.}} Denote by
\begin{equation*}
    \mathbf{e}^{a}(z) = \begin{pmatrix} e_{1}^{a}(z) & \dotso & e_{M}^{a}(z) \end{pmatrix}^{t},
\end{equation*}
we then define our complex inputs
\begin{equation}\label{AL:eqn0}
    \mathbf{c}_{0}^{k} : = \frac{1}{L} \sum_{l = 1}^{L} q_{M}(z_{l}) \overline{\mathbf{e}^{a}(z_{l})} \tilde{F}_{N}^{k}(z_{l}).
\end{equation}
Note that the holomorphy of $ e_{\beta}^{a} $, $ \tilde{F}_{N}^{k} $ and the complex conjugacy symmetry of $ \{ Z_{l} \} $ imply that $ \mathbf{c}^{k} $ are real-valued $ M $-vectors.
\medskip

{\it{Step 3: The Neural Operator.}} Note that $ \lvert \Lambda_{M} \rvert = M $, we want to set the latent space representation $ F_{N, 0}^{k} $ such that
\begin{equation*}
    B_{a}(f^{(k)}) = f^{(k)} \approx F_{N,0}^{k} = \sum_{\beta \in \Lambda_{M}} c_{\beta, 0}^{k} e_{\beta}^{a}.
\end{equation*}
Here the concrete latent space is given by $ \overline{\text{Span}{V_{M}}} $. It is equivalent to define the map from evaluation representations of $ f^{(k)} $ (and $ u^{(k)} $) to $ \mathbf{c}^{k} $ given in {\it{Step 2}}, which can also considered as a map from $ L^{2}(\R^{n};\R) $ to $ \mathcal{F}_{a}^{2}(\C^{n}) $,
\begin{align*}
    & \tilde{B} : \R^{N} \rightarrow \R^{M},  \begin{pmatrix} f^{(k)}(x_{1}) \\ \vdots \\ f^{(k)}(x_{N}) \end{pmatrix} = \mathbf{f}^{k} \mapsto \mathbf{c}_{0}^{k},  \tilde{B} : L^{2}(\R^{n};\R) \rightarrow \mathcal{F}_{a}^{2}(\C^{n}), f^{(k)} \mapsto F_{N,0}^{k}.
\end{align*}
With inverse Segal-Bargmann transform, we would hope without learning process (as we proved in Theorem \ref{CON:thml1} below)
\begin{align*}
    h_{\beta}^{a} = \mathcal{B}_{a}^{-1}(e_{\beta}^{a}) \Rightarrow & f^{(k)}(x) \approx \mathcal{B}_{a}^{-1}(F_{N,0}^{k})(x) = \sum_{\beta \in \Lambda_{M}} c_{\beta, 0}^{k}h_{\beta}^{a}(x) : = f_{N,0}^{k}(x).
\end{align*}
This induces a map from $ \R^{M} $ to $ \R^{N} $ by
\begin{align*}
   & \tilde{B}^{-1} : \R^{M} \rightarrow \R^{N}, \mathbf{c}_{0}^{k} \mapsto \mathbf{c}_{N, 0}^{k} : = \begin{pmatrix} f_{N, 0}^{k}(x_{1}) \\ \vdots \\ f_{N, 0}^{k}(x_{N}) \end{pmatrix}, \\
   & \tilde{B}^{-1} : \mathcal{F}_{a}^{2}(\C^{n}) \rightarrow L^{2}(\R^{n}; \R), F_{N, 0}^{k}(z) = \sum_{\beta \in \Lambda} c_{\beta, 0}^{k}e_{\beta}^{a}(z) \mapsto \sum_{\beta \in \Lambda_{M}} c_{\beta, 0}^{k}h_{\beta}^{a}(x) = f_{N, 0}^{k}(x).
\end{align*}
Relabeled our real input
\begin{equation*}
    v^{0,k} = f^{(k)}, \mathbf{c}_{0}^{k} = \tilde{B}(\mathbf{f}^{k}) = \tilde{B}(\mathbf{v}^{0, k}) = \mathbf{d}^{0,k}. 
\end{equation*}
For the $ 1st $ Segal-Bargmann layer, we apply a $ m $-layer MLP $ \mathcal{G}_{\theta_{1}} $ to $ \mathbf{c}_{0}^{k} $ such that
\begin{equation*}
   \R^{M} \ni \mathbf{\eta}^{1,k} = \mathcal{G}_{\theta_{1}}\mathbf{d}^{0,k} \Rightarrow \tilde{\mathbf{\eta}}^{1, k} = \tilde{B}^{-1}(\mathbf{\eta}^{1, k}).
\end{equation*}
The map $ \tilde{B}^{-1} $ is exactly the same as given above, which means
\begin{equation*}
\tilde{\mathbf{\eta}}^{1, k} = \tilde{B}^{-1}(\mathbf{\eta}^{1, k}) = \begin{pmatrix} \mathcal{B}_{a}^{-1}(\sum_{\beta \in \Lambda_{M}} \eta_{\beta}^{1, k} e_{\beta}^{a})(x_{1}) \\ \vdots \\ \mathcal{B}_{a}^{-1}(\sum_{\beta \in \Lambda_{M}} \eta_{\beta}^{1, k} e_{\beta}^{a})(x_{N}).
\end{pmatrix}
\end{equation*}
Adding a linear part and a nonlinear activation $ \sigma $ (taking GeLU or ReLU), we have the output of the first Segal-Bargmann layer, or the input of the second Segal-Bargmann layer
\begin{align*}
    \mathbf{v}^{1, k} & = \sigma(W_{0} \mathbf{v}^{0, k} + \tilde{\eta}^{1, k}) = \sigma(W_{0} \mathbf{v}^{0, k} + \tilde{B}^{-1} \mathcal{G}_{\theta_{1}}  \mathbf{d}^{0, k}) \\
    & = \sigma(W_{0} \mathbf{v}^{0, k} + \tilde{\eta}^{1, k}) = \sigma(W_{0} \mathbf{v}^{0, k} + \tilde{B}^{-1} \mathcal{G}_{\theta_{1}} \tilde{B} \mathbf{v}^{0, k}).
\end{align*}
Or in the sense of functions
\begin{equation*}
    v^{1, k}(x) = \sigma(W_{0}v^{0, k}(x) + \mathcal{B}^{-1}(\mathcal{G}_{\theta_{1}}\tilde{B}(v^{0, k})(Z_{l}))(x)).
\end{equation*}
Then we can do the second Segal-Bargmann layer, and get either complex output
\begin{equation*}
\mathbf{d}^{2, k} = \mathcal{G}_{\theta_{2}}\tilde{B}(\mathbf{v}^{1, k}).
\end{equation*}
or we can get another real output
\begin{equation*}
     \mathbf{v}^{2, k}  = \sigma(W_{1} \mathbf{v}^{1, k} + \tilde{B}^{-1} \mathcal{G}_{\theta_{1}} \tilde{B} \mathbf{v}^{1, k}).
\end{equation*}
or in the sense of function
\begin{equation*}
    v^{2, k}(x) = \sigma(W_{1}v^{1, k}(x) + \tilde{B}^{-1}(\mathcal{G}_{\theta_{2}}\tilde{B}(v^{1, k})(Z_{l})(x))
\end{equation*}
And we can keep going to get a total of $ S $ Segal-Bargmann layers, where the complex output and real output are denoted by
\begin{align}\label{AL:eqn5}
    \mathbf{d}^{S, k} & = \mathcal{G}_{\theta_{S}}\tilde{B}(\mathbf{v}^{S - 1, k}), \nonumber \\
    v^{S, k}(x) & = W_{S -1, k}v^{S - 1, k}(x) + \tilde{B}^{-1}(\mathcal{G}_{\theta_{S}}\tilde{B}(v^{S- 1, k})(Z_{l})(x).
\end{align}
\medskip

{\it{Step 4: The Real Relative $ L^{2} $-Loss.}}
By the expression of outputs in (\ref{AL:eqn5}), we define a relative $ L^{2} $-loss in the original real space. Set
\begin{equation*}
    \hat{u}^{k} : = v^{S, k} \Rightarrow \hat{u}^{k} : = \mathcal{G}_{\Theta}v^{0, k} = \mathcal{G}_{\Theta}f^{(k)},
\end{equation*}
we have
\begin{equation}\label{AL:eqn7}
L_{MSE, real} = \frac{1}{K} \sum_{k = 1}^{K} \frac{\sum_{i = 1}^{N} \lvert \hat{u}^{k}(x_{i}) - u^{(k)}(x_{i}) \rvert^{2}}{\sum_{i = 1}^{N} \lvert u^{(k)}(x_{i}) \rvert^{2}}.
\end{equation}
\medskip

{\it{Step 5: Testing.}}

Suppose that we have testing set $ \{ u_{test}^{k}, f_{test}^{k} \} $. Let's assume that our complex trained output is $ \hat{U}_{test}^{k} $ constructed by $ \hat{\mathbf{d}}_{test}^{k} = \tilde{\mathcal{G}}_{\Theta} \mathbf{d}_{test}^{0, k} $ where the input is $ \mathbf{d}_{test}^{0, k} = \mathbf{c}_{test}^{k} $ coming from (\ref{AL:eqn0}) with respect to complex grids $ \{ Z_{l} \}_{l = 1}^{L} $ as given above.

It follows that
\begin{equation*}
    \hat{u}_{test, r}^{k} = \mathcal{G}_{\Theta}f_{test}^{k}.
\end{equation*}
It follows that the testing relative $ l^{2} $ error is given by
\begin{equation*}
    Err_{r} = \frac{1}{K} \sum_{k = 1}^{K} \frac{\sum_{i = 1}^{N} \lvert \hat{u}_{test, r}^{k}(x_{i}) - u_{test}^{k}(x_{i}) \rvert^{2}}{\sum_{i = 1}^{N} \lvert u_{test}^{k}(x_{i}) \rvert^{2}}.
\end{equation*}
We can replace $ \{ x_{i} \} $ by $ \{ y_{j} \} $ to test the error in terms of the generalization of the NO.
\medskip

\subsection{One-Step End-to-End Algorithm.} The only change we make is to replace the pre-training encoder $ W $ by a neural network $ \mathcal{W}_{\theta'} $, therefore (\ref{AL:eqn0}) becomes
\begin{equation}\label{AL:eqn000}
    \mathbf{c}_{0}^{k} : = \frac{1}{L} \sum_{l = 1}^{L} q_{M}(z_{l}) \overline{\mathbf{e}^{a}(z_{l})} \tilde{F}_{N, \theta'}^{k}(z_{l}).
\end{equation}
where
\begin{equation*}
    \tilde{F}_{N, \theta'}^{k}(z) : = \frac{1}{N} \sum_{i = 1}^{N} w_{li, \theta'} A_{a, M}(z, x_{i})f^{(k)}(x_{i}).
\end{equation*}
Everything else are exactly the same as in \S4.3. The encoding neural network only applies to the construction of the initial complex inputs. We then apply the loss (\ref{AL:eqn7}) to optimize both $ \theta' $ and $ \Theta $.
\medskip

\subsection{Summary of Algorithms.}
The two algorithms are summarized below:
\begin{algorithm}[htbp]
\caption{Two-step training of the Segal--Bargmann neural operator}
\label{alg:sbno-two-step}
\raggedright
\textbf{Input:} Training observations
$\{f^{(k)}(x_i),u^{(k)}(x_i)\}_{i,k}$; parameters $a,M,L,S$ with even $L$.
\par\textbf{Output:} Encoder $W$ and operator parameters $\Theta$.
\begin{enumerate}
\renewcommand{\labelenumi}{\arabic{enumi}:}
\setlength{\itemsep}{0pt}
\setlength{\parskip}{0pt}
\item Sample $z_1,\ldots,z_{L/2}$ independently from $\rho_a$ and set
$z_{l+L/2}=\overline{z_l}$.
\item Minimize $L_{W,\mathrm{data}}$ over $W$, freeze $W$, and construct
the encoder $\widetilde B$.
\item Form $\hat u^k=\mathcal G_\Theta f^{(k)}$ using $S$ Segal--Bargmann
layers and minimize $L_{\mathrm{MSE,real}}$ over $\Theta$.
\end{enumerate}
\end{algorithm}

\begin{algorithm}[htbp]
\caption{End-to-end training of the Segal--Bargmann neural operator}
\label{alg:sbno-end-to-end}
\raggedright
\textbf{Input:} Training observations and parameters as above;
encoding network $\mathcal W_{\theta'}$.
\par\textbf{Output:} Trained parameters $\theta'$ and $\Theta$.
\begin{enumerate}
\renewcommand{\labelenumi}{\arabic{enumi}:}
\setlength{\itemsep}{0pt}
\setlength{\parskip}{0pt}
\item Sample conjugate-paired complex points as in
Algorithm~\ref{alg:sbno-two-step}.
\item Construct the initial coefficients $\mathbf c_0^k$ using
$\mathcal W_{\theta'}$; keep the encoders in subsequent layers fixed.
\item Form $\hat u^k=\mathcal G_\Theta f^{(k)}$ and minimize
$L_{\mathrm{MSE,real}}$ jointly over $\theta'$ and $\Theta$.
\end{enumerate}
\end{algorithm}
\medskip

\section{$ L^{\infty} $-Convergence and Independence of The Dimension}
In this section, we consider the convergence rate between the estimate $ \hat{u} $ and the ground truth solution $ u $ of the PDE $ \mathcal{L}u = f $. For notational simplicity, we give proofs only for elliptic smooth differential operator $ \mathcal{L} $. By selecting the appropriate functional spaces, the analysis for time-dependent smooth differential operators is essentially the same, expect notationally more complicated. Recall that we only have finitely many observations $ \{ f(x_{i}) \} $. In \S5.1, we consider whether the unknown ground truth $ F = \mathcal{B}_{a}(f) $ can be approximated by a finite expansion $ \hat{F} = \sum_{\beta \in \Lambda_{M}} \hat{c}_{\beta} e_{\beta}^{a} $ determined by both complex grids $ \{ z_{l} \}_{l = 1}^{L} $ and observations $ \{ f(x_{i}) \}_{i =1 }^{N} $. $ \hat{F} $ serves as an RKHS approximation of $ F $. In \S5.2, we give $ L^{\infty} $-type and $ L^{2} $-type convergence rate between $ \hat{u} $ and $ u $, in terms of the size of the complex grids $ \{ z_{l} \}_{l = 1}^{L} $. Under the stated hypotheses, the $ L^{-\frac{1}{2}} $ reconstruction errors are dimension-free, and are independent of the size of real grids $ \{ x_{i} \}_{i = 1}^{N} $.

Let $ f \in L^{2}(\R^{n}; \R) \cap \calC^{\infty}(\R^{n};\R) $ and suppose that we only know the evaluations of $ f $ at finitely many grids $ \{ x_{i} \}_{i = 1}^{N} $. In this section, we show that how to recover $ f $ from $ \{ f(x_{i}) \}_{i = 1}^{N} $ by introducing complex sampling grids on $ \C^{n} $, provided that a very limited prior knowledge about the evaluation representations $ \{ x_{i} \}_{i = 1}^{N} $ is given.

Theoretically, any $ f \in L^{2}(\R^{n}; \R) $ can be written as
\begin{equation*}
f = \sum_{\beta} c_{\beta} h_{\beta}^{a}, \lVert f \rVert_{L^{2}}^{2} = \sum_{\beta} \lvert c_{\beta} \rvert^{2}.
\end{equation*}
in the $ L^{2} $-sense where
\begin{equation*}
    c_{\beta} = \langle f, h_{\beta}^{a} \rangle_{L^{2}(\R^{n}; \R)}, \mathbf{c} : = (c_{1}, \dotso, c_{\beta}, \dotso, c_{M}).
\end{equation*}
by orthonormality of $ \{ h_{\beta}^{a}  \} $. Numerically, it is always one of the biggest problems of how to project an infinite dimensional space onto a suitable finite dimensional subspace. Practically, we have to choose finitely many elements to form a finite linear combination of $ \{ h_{\beta}^{a} \} $ in the sense that
\begin{equation*}
    P_{M}f : = \sum_{\beta \in \Lambda_{M}} c_{\beta} h_{\beta}^{a}, \lVert f - P_{M} f \rVert_{L^{2}(\R^{n}; \R)} \rightarrow 0 \: {\rm as} \; M \rightarrow \infty.
\end{equation*}
If we have complete information about $ f $, then each $ c_{\beta} = \langle f, h_{\beta}^{a} \rangle_{L^{2}} $ in the sense of $ L^{2} $-inner product. But with only $ \{ f(x_{i}) \}_{i =1}^{N} $, the standard way is to approximate $ f $ by $ \sum_{\beta \in \Lambda_{M}} c_{\beta} h_{\beta}^{a} $ via the discrete least square
\begin{equation}\label{CON:eqnl0a}
\tilde{\mathbf{b}} = \text{argmin}_{\mathbf{d}} \left(\frac{1}{N} \sum_{i = 1}^{N} \left( f(x_{i}) - \sum_{\beta \in \Lambda_{M}} d_{\beta} h_{\beta}^{a}(x_{i}) \right)^{2} \right).
\end{equation}
The other way is to estimate $ c_{\beta} $ via naive discrete sum
\begin{equation}\label{CON:eqnl0b}
   \tilde{\mathbf{c}} = (\tilde{c}_{1}, \dotso, \tilde{c}_{\beta}, \dotso, \tilde{c}_{M}), \tilde{c}_{\beta} = \frac{1}{N} \sum_{i = 1}^{N} f(x_{i}) h_{\beta}^{a}(x_{i})
\end{equation}
This unweighted sum does not in general approximate the Lebesgue inner product without suitable sampling or quadrature weights. Set the $ M \times M $ matrix $ H = (H)_{\beta, \beta'} $ as
\begin{equation}\label{CON:eqnl0}
    H = \begin{pmatrix} \sum_{ i = 1}^{N} h_{1}^{a}(x_{i})h_{1}^{a}(x_{i}) & \dotso & \sum_{i = 1}^{N} h_{1}^{a}(x_{i}) h_{M}^{a}(x_{i}) \\ \vdots & \ddots & \vdots \\ \sum_{i = 1}^{N} h_{M}^{a}(x_{i})h_{1}^{a}(x_{i}) & \dotso & \sum_{i = 1}^{N} h_{M}^{a}(x_{i})h_{M}^{a}(x_{i}) \end{pmatrix}.
\end{equation}
If $ H $ is invertible,
\begin{equation*}
    \tilde{b}_{\beta} = \sum_{\beta' \in \Lambda_{M}} (H^{-1})_{\beta, \beta'} \sum_{i = 1}^{N} f(x_{i})h_{\beta'}^{a}(x_{i}) = \sum_{\beta' \in \Lambda_{M}} N(H^{-1})_{\beta, \beta'} \left( \frac{1}{N}\sum_{i = 1}^{N} f(x_{i})h_{\beta'}^{a}(x_{i})\right) = \sum_{\beta' \in \Lambda_{M}} N(H^{-1})_{\beta, \beta'} \hat{c}_{\beta}'.
\end{equation*}
From this, we get
\begin{equation*}
    \lVert \tilde{\mathbf{c}} - \mathbf{c} \rVert_{\R^{M}} = \left\lVert \frac{H}{N} \tilde{\mathbf{b}} - \mathbf{c} \right\rVert_{\R^{M}} \leqslant \left\lVert \frac{H}{N} (\tilde{\mathbf{b}} - \mathbf{c}) \right\rVert_{\R^{M}} + \left\lVert \left( \frac{H}{N} - I \right) \mathbf{c} \right\rVert_{\R^{M}} 
\end{equation*}
If $ \lVert \tilde{\mathbf{b}} - \mathbf{c} \rVert = O(1) $ and $ \frac{1}{N} \lVert H \rVert = O(1) $, then it implies that $ \lVert \tilde{\mathbf{c}} - \mathbf{c} \rVert = O(1) $. The hypothesis of $ 1/N \lVert H \rVert = O(1) $ is a plausible assumption. If the empirical inner products approximate the $ L^{2} $ inner product $ \langle h_{\beta}^{a}, h_{\beta'}^{a} \rangle_{L^{2}} = \delta_{\beta, \beta'} $. Therefore, there are grids $ \{ x_{i} \} $ such that
\begin{equation*}
    \frac{1}{N} \sum_{i = 1}^{N} h_{\beta}^{a}(x_{i}) h_{\beta'}^{a}(x_{i}) \approx \delta_{\beta, \beta'} \Rightarrow H \approx N I_{M \times M}.
\end{equation*}
Our hypothesis says that the symmetric matrix $ H $ is not too far away from an invertible matrix whose largest eigenvalue is $ O(N) $, i.e. $ H $ is not severely ill-conditioned. Analogously,
\begin{equation*}
    \lVert \tilde{\mathbf{b}} - \mathbf{c} \rVert_{\R^{M}} = \lVert NH^{-1} \tilde{\mathbf{c}} - \mathbf{c} \rVert_{\R^{M}} \leqslant \lVert NH^{-1} (\tilde{\mathbf{c}} - \mathbf{c}) \rVert_{\R^{M}} + \lVert (NH^{-1} - I) \mathbf{c} \rVert_{\R^{M}}.
\end{equation*}
If $ \lVert \tilde{\mathbf{c}} - \mathbf{c} \rVert_{\R^{M}} = O(1) $ and $ N \lVert H^{-1} \rVert = O(1) $, then $ \lVert \tilde{\mathbf{b}} - \mathbf{c} \rVert_{\R^{M}} = O(1) $. 

The discussion above indicates that the matrix $ H $ characterized by the evaluations of orthonormal basis $ \{ h_{\beta}^{a} \} $ is crucial. But it is more important to have prior knowledge of how grids $ \{ x_{i} \} $ are distributed as the basic example above shows. However, the best we can say is that $ \{ x_{i} \} $ are samples of random variables $ \{ X_{i} \} $. We cannot say more about the means and variances of $ \{ X_{i} \} $. 

The above discussion also indicates that the truncation of the orthonormal expansion of $ f $ is also crucial. These motivate the following two assumption:

{\bf{Assumption A}}: Let $ f \in L^{2}(\R^{n}; \R) \cap \calC^{\infty}(\R^{n};\R) $. We say that $ f $ is {\bf{accessible}} if there exists a finite index set $ \Lambda_{M} = \{ \beta \in \mathbb{N}_{0}^{n}: \lvert \Lambda_{M} \rvert = M \} = \{ \beta_{1}, \dotso, \beta_{M} \} $ and associated finite collection of orthonormal basis $ V_{M} = \{ h_{\beta}^{a} : \beta \in \Lambda_{M} \} $ such that
\begin{equation*}
f = \sum_{\beta \in \Lambda_{M}} c_{\beta} h_{\beta}^{a} \Leftrightarrow f \in \text{Span}{V_{M}}
\end{equation*}
in the $ L^{2} $-sense. It is plausible e.g.\ under the low-dimensional sparsity assumption in \cite{CRT}.
\begin{remark}
    The {\it{Assumption A}} is very restrictive. However, we will give dimension-free reconstruction error with this assumption for the purpose of notational simplicity. We will then give results of the reconstruction error in the most general setting, especially without this assumption.
\end{remark}

{\bf{Assumption B}}: Let $ F = \mathcal{B}_{a}(f) $. We say that the information $ f $ is {\bf{recoverable}} from finite observations $ \{ f(x_{i}) \}_{i =1}^{N} $ if for some collection of complex random variables $ \{ Z_{l} \}_{l = 1}^{N} \overset{i.i.d.}{\sim} \rho_{\C}(z) $, there exists a random matrix $ W = (w_{li}) \in \C^{L \times N} $ (called the {\bf{information recovery matrix}}) such that for our complex approximation
\begin{equation}\label{REV2:eqn00}
    \tilde{F}_{N}(Z_{l}) : = \frac{1}{N} \sum_{i = 1}^{N} w_{li} A_{a}(Z_{l}, X_{i})f(X_{i}),
\end{equation}
the difference between the approximation $ \tilde{F}_{N} $ and unknown ground truth $ F $ at $ Z_{l} $
\begin{equation*}
   \eta_{l} : = F(Z_{l}) - \tilde{F}_{N}(Z_{l})
\end{equation*}
satisfies the following conditions, where $ Z = (Z_{1},  \dotso, Z_{L}) $ denotes the full collection of complex samples:
\begin{equation}\label{REV2:eqn0}
\mathbb{E}_{X | Z}[\eta_{l}] = 0, \mathbb{E}_{X | Z}[\eta_{l}\bar{\eta}_{m}] = \sigma_{lm}^{2} \delta_{lm}, \sigma_{lm}^{2} \leqslant (C')^{2} < \infty.
\end{equation}
\begin{remark}\label{REV2:re0}
For a fixed training set, \cite[Theorem 3.2]{DU} proves convergence of the total training loss to zero for a sufficiently wide two-layer ReLU network under its stated assumptions. We modify their proof and show that {\it{Assumption B}} can be achieved under some hypotheses. The proof can be found in Appendix. Therefore, it is reasonable to assume {\it{Assumption B}}.

We state several basic cases when {\it{Assumption B}} are satisfied in Remark \ref{CON:rel0} below. We also give two algorithm for the construction of $ W $ in \S4.3 and \S4.4. We also explain why (\ref{REV2:eqn0}) is a reasonable goal under the Segal-Bargmann framework in \S4.2. 

More importantly, we can treat $ W $ as a neural network for the purpose of information recovery from data in the original space to the information in the Bargmann-Fock space by grids on the phase space from physics point of view. We then give an end-to-end algorithm which combines information recovery and operator learning.
\end{remark}

\begin{remark}\label{CON:rel0}
Suppose that we estimate $ f $ by $ \sum_{\beta \in \Lambda_{M}} \hat{c}_{\beta} h_{\beta}^{a} $ from our observations $ \{ f(x_{i}) \} $. Recall that $ \tilde{c}_{\beta} $ is our naive guess of the ground truth $ c_{\beta} = \langle f, h_{\beta}^{a} \rangle_{L^{2}(\R^{n}; \R)} $. Set
\begin{equation*}
  \hat{f} : = \sum_{\beta \in \Lambda_{M}} \hat{c}_{\beta} h_{\beta}^{a}, \tilde{f} : = \sum_{\beta \in \Lambda_{M}} \tilde{c}_{\beta} h_{\beta}^{a}.
\end{equation*}
We have
\begin{equation*}
\lvert f - \hat{f} \rvert  \leqslant \lvert f - P_{M} f \rvert + \lvert P_{M}f - \tilde{f} \rvert + \lvert \tilde{f} - \hat{f} \rvert.
\end{equation*}
Either the least square (\ref{CON:eqnl0a}) or the approximation of the inner product (\ref{CON:eqnl0b}) recovers the information of the projection of $ f $ onto $ V_{M} $. We definitely loss the information of $ f $ on $ (\text{Span}(V_{M}))^{\perp} $.

The {\it{Assumption A}} is very restrictive, which gives a brutal control of the first term in the inequality above. The convergence $ \lVert f - P_{M}f \rVert_{L^{2}(\R^{n}; \R)} \xrightarrow{M \rightarrow \infty} 0 $ motivates finite-dimensional approximation, although exact finite representability in {\it{Assumption A}} is stronger. But it is not enough since our goal is to give $ L^{\infty} $-norm of $ f - \hat{f} $, as the $ L^{2} $-sense tail estimate usually aligns with the curse of dimensionality. 

In Lemma \ref{CON:lemma0} below, we show that if $ f $ has enough regularity, and for any spatial dimension $ n $ and any truncation $ M $, there exists an appropriate $ a $ such that the $ \sup_{x \in K} \lvert f - P_{M}f \rvert $ is small enough for any compact set $ K $ given a priori.

Note that the Gauss-Hermite polynomials are nonlinear eigenfunctions of the Fourier transform and hence the highest order of a Gauss-Hermite polynomial reflects the highest frequency it represents. In practice, we may always assume that the functions that characterize our observable data have finite maximum frequencies. Note that a finite Hermite expansion in {\it{Assumption A}} is not exactly band-limited.

We still need to control the second and the third terms in the inequality above. The last term may be tackled by different machine learning approaches such as neural networks. 

The gap is the difference between the ground truth $ P_{M} f $ and our data-driven recovery $ \tilde{f} $. If we know the invertibility of $ H $ with respect to $ \{ x_{i} \} $, then we can recover $ \mathbf{c} $ directly. In this case, we get
\begin{equation*}
    w_{li} = \frac{N\mathbf{e}^{a}(Z_{l})^{t}H^{-1}\mathbf{h}^{a}(X_{i})}{A_{a, M}(Z_{l}, X_{i})}, \forall i, l
\end{equation*}
where
\begin{equation*}
\mathbf{e}^{a}(z) = \begin{pmatrix} e_{1}^{a}(z) \\ \vdots \\ e_{M}^{a}(z) \end{pmatrix}, \mathbf{h}^{a}(z) = \begin{pmatrix} h_{1}^{a}(z) \\ \vdots \\ h_{M}^{a}(z) \end{pmatrix}, A_{a, M}(z, x) : =  P_{M}A_{a}(z, x) = \sum_{\beta \in \Lambda_{M}} e_{\beta}^{a}(z) h_{\beta}^{a}(x).
\end{equation*}
When $ \{ x_{i} \} $ are sampling from i.i.d. random variables $ \{ X_{i} \}_{i = 1}^{N} \overset{i.i.d.}{\sim} \rho $, and $ N $ is large enough, {\bf{Assumption B}} holds by taking Monte-Carlo sampling with
\begin{equation*}
    w_{li} = \frac{1}{\rho(X_{i})}, \forall l.
\end{equation*} 
In practice, these are not the cases in general. However, the two basic examples above illustrate the feasibility of our {\it{Assumption B}}. This is one of our main purposes of the introduction of the $ a $-Segal-Bargmann transform and the $ a $-Bargmann-Fock space, where we use complex samples to try to construct a latent representation of the original data, in order to obtain dimension reduction and feature compression to some extent. {\it{Assumption B}} states recovery conditions used in our theoretical derivations.

It is easier to get error bounds of $ \lVert f- \tilde{f} \rVert_{L^{2}(\R^{n}; \R)} $, but our goal is $ \lVert f - \tilde{f} \rVert_{L^{\infty}(\R^{n}; \R)} $. The Segal-Bargmann transform allows us to take advantage of the RKHS structure and the rigidity of complex analyticity from the Bargmann-Fock space.

Later we would also like to discuss how to recover $ \mathbf{c} $ by dropping the {\it{Assumption A}}.
\end{remark}
\subsection{Convergence with Complex $ L $-Sampling}
The results in this subsection are preliminaries for our neural operator learning process in later sections.

In this subsection, we will show that for $ f $ that satisfies {\it{Assumption A}} and the grids $ \{ x_{i} \} $ that satisfy {\it{Assumption B}}, we can apply the $ a $-Segal-Bargmann transform, and introduce complex $ L $-sampling in the $ a $-Bargmann-Fock space to obtain an error bound of
\begin{equation*}
\sup_{x \in \R^{n}} \lvert f - \hat{f} \rvert
\end{equation*}
for $ f \in L^{2}(\R^{n}; \R) \cap \calC^{\infty}(\R^{n}; \R) $. If we assume that $ f \in H^{s}(\R^{n}; \R) $ for $ s > \frac{n}{2} $, we can get the same error bound without {\it{Assumption A}}. 

More importantly, we show that with an appropriate choice of $ a $ that is specific for the spatial dimension $ n $, the error bound we will give is independent of the spatial dimension $ n $.

The crucial step is to not to consider the error bound in the original space, but in the $ a $-Bargmann-Fock space. We see the advantage of the $ a $-Segal-Bargmann transform immediately. By Definition \ref{SB:def2} and Definition \ref{SB:def3}, we have
\begin{equation*}
    \mathcal{B}_{a}(f) = F = \sum_{\beta} c_{\beta}e_{\beta}^{a}, c_{\beta} = \langle f, h_{\beta}^{a} \rangle_{L^{2}(\R^{n}; \R)} = \langle F, e_{\beta}^{a} \rangle_{\mathcal{F}_{a}^{2}(\C^{n})} 
\end{equation*}
where the equality holds pointwise! In addition,
\begin{equation*}
   \hat{F} : = \sum_{\beta \in \Lambda_{M}} \hat{c}_{\beta} e_{\beta}^{a} = \mathcal{B}_{a}(\hat{f}), \lVert f - \hat{f} \rVert_{L^{2}(\R^{n}; \R)} = \lVert F - \hat{F} \rVert_{\mathcal{F}_{a}^{2}(\C^{n})}.
\end{equation*}
It follows that we can equally discuss $ \lVert F - \hat{F} \rVert_{\mathcal{F}_{a}^{2}(\C^{n})} $. More importantly, merely with {\it{Assumption A}} and {\it{Assumption B}}, we can give pointwise error bound
\begin{equation*}
    \sup_{z \in D} \lvert F - \hat{F} \rvert
\end{equation*}
which is again independent of the dimension, since the Fock space $ \mathcal{F}_{a}^{2} $ is an RKHS. The key point is that we will estimate $ \hat{c}_{\beta} $ by using complex grids $ \{ z_{l} \} \subset \C^{n} $, which are decoupled from the original grids $ \{ x_{i} \} $. The decoupling between real and complex grids allows us to choose complex grids at will.

We need the following technical lemma for the estimate of the projection of $ f $ onto $ \text{Span}(V_{M})^{\perp} $.
\begin{lemma}\label{CON:lemma0}
Let $ f \in H_{a}^{s}(\R^{n}; \R) \cap C^{\infty}(\R^{n}; \R) $ for some $ s \geq s_{0} > \frac{n}{2} $. Then for each $ M_{0} \in \mathbb{N} $,
\begin{equation}\label{CON:eqnl2}
\sup_{x \in \R^{n}} \lvert f - P_{M_{0}} f \rvert = \sup_{x \in \R^{n}} \lvert \sum_{\lvert \beta \rvert > M_{0}} c_{\beta} h_{\beta}^{a} \rvert \leqslant (4\pi)^{-\frac{n}{4}} \sqrt{\frac{\Gamma(s - \frac{n}{2})}{\Gamma(s)}} \lVert f \rVert_{\mathcal{H}_{a}^{s}}
\end{equation}
and
\begin{equation}\label{CON:eqnl1}
\sup_{x \in \R^{n}} \lvert f - P_{M_{0}} f \rvert \leqslant (1 + 2a(2M_{0} + n))^{-\frac{s - s_{0}}{2}} (4\pi)^{-\frac{n}{4}} \sqrt{\frac{\Gamma(s_{0} - \frac{n}{2})}{\Gamma(s_{0})}} \lVert f \rVert_{\mathcal{H}_{a}^{s}}.
\end{equation}
In addition,
\begin{equation}\label{CON:eqnl3}
\lVert f - P_{M_{0}} f \rVert_{\calL^{2}(\R^{n}; \R)} \leqslant (1 + 2a(2M_{0} + n))^{-\frac{s}{2}} \lVert f \rVert_{\mathcal{H}_{a}^{s}(\R^{n}; \R)}.
\end{equation}
\end{lemma}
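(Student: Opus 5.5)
The plan is to expand $f=\sum_\beta c_\beta h_\beta^a$ and bound the tail pointwise by Cauchy--Schwarz with the weights $(1+\lambda_{\beta,n})^{s}$ from Definition \ref{Def5}. For any $x\in\R^n$,
\begin{equation*}
\Big\lvert \sum_{\lvert\beta\rvert>M_0} c_\beta h_\beta^a(x)\Big\rvert \leqslant \Big(\sum_{\lvert\beta\rvert>M_0}(1+\lambda_{\beta,n})^{s}\lvert c_\beta\rvert^2\Big)^{\frac12}\Big(\sum_{\lvert\beta\rvert>M_0}(1+\lambda_{\beta,n})^{-s}\lvert h_\beta^a(x)\rvert^2\Big)^{\frac12}.
\end{equation*}
The first factor is at most $\lVert f\rVert_{\mathcal{H}_a^s}$. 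Since $s>\frac n2$ and $f$ is smooth, the series converges absolutely and uniformly, so it equals $f-P_{M_0}f$ pointwise; this justifies the left-hand side of (\ref{CON:eqnl2}).

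The key step is a uniform bound on the second factor, which is the diagonal of the kernel of $(I+\mathcal{H}_a)^{-s}$. I will write
\begin{equation*}
(1+\lambda)^{-s}=\frac{1}{\Gamma(s)}\int_0^\infty t^{s-1}e^{-t(1+\lambda)}\,dt .
\end{equation*}
Summing over all $\beta$ then gives $\frac{1}{\Gamma(s)}\int_0^\infty t^{s-1}e^{-t}\,p_t(x,x)\,dt$, where $p_t$ is the heat kernel of $\mathcal{H}_a=-\Delta+4a^2\lvert x\rvert^2$. Mehler's formula gives explicitly
\begin{equation*}
p_t(x,x)\leqslant \left(\frac{2a}{2\pi\sinh(4at)}\right)^{\frac n2}\leqslant (4\pi t)^{-\frac n2}.
\end{equation*}
The second inequality follows from $\sinh y\geqslant y$, or equivalently from domination by the free heat kernel, since the potential is nonnegative (Feynman--Kac). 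Integrating yields
\begin{equation*}
\sup_x\sum_\beta(1+\lambda_{\beta,n})^{-s}\lvert h_\beta^a(x)\rvert^2\leqslant (4\pi)^{-\frac n2}\frac{\Gamma(s-\frac n2)}{\Gamma(s)},
\end{equation*}
and this requires $s>\frac n2$. Taking square roots gives (\ref{CON:eqnl2}). This uniform, $x$-independent heat-kernel bound is the main obstacle, and I will rely on Mehler's formula there. Dropping the sum to the tail $\lvert\beta\rvert>M_0$ only decreases the quantity.

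For (\ref{CON:eqnl1}) I will split the weight as $(1+\lambda)^{-s}=(1+\lambda)^{-(s-s_0)}(1+\lambda)^{-s_0}$. On the tail, $\lvert\beta\rvert>M_0$ gives $\lambda_{\beta,n}\geqslant 4a(M_0+1)+2an\geqslant 2a(2M_0+n)$, so
\begin{equation*}
(1+\lambda_{\beta,n})^{-(s-s_0)}\leqslant (1+2a(2M_0+n))^{-(s-s_0)}.
\end{equation*}
Pulling this factor out and applying the $s_0$ version of the kernel bound gives (\ref{CON:eqnl1}) after the square root. For (\ref{CON:eqnl3}), Parseval gives
\begin{equation*}
\lVert f-P_{M_0}f\rVert_{L^2}^2=\sum_{\lvert\beta\rvert>M_0}\lvert c_\beta\rvert^2\leqslant (1+2a(2M_0+n))^{-s}\sum_\beta(1+\lambda_{\beta,n})^s\lvert c_\beta\rvert^2 ,
\end{equation*}
and taking the square root finishes the proof.
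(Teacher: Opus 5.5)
Your proposal is correct and follows the paper's proof essentially step for step. It uses Cauchy--Schwarz with the weights $(1+\lambda_{\beta,n})^{\pm s}$, the Gamma-function identity combined with Mehler's diagonal kernel and $\sinh y\geqslant y$, the split $(1+\lambda_{\beta,n})^{-s}=(1+\lambda_{\beta,n})^{-(s-s_0)}(1+\lambda_{\beta,n})^{-s_0}$ on the tail, and Parseval for the $L^2$ bound. You also justify two points the paper glosses over: that the series converges uniformly to $f-P_{M_0}f$ pointwise, and the correct sign in the weight split, where the paper's display has $(1+\lambda_{\beta,n})^{s_0}$ instead of $(1+\lambda_{\beta,n})^{-s_0}$.
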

\begin{proof}
By Definition \ref{Def5} and the eigenvalue equation of $ \mathcal{H}_{\beta}^{a} $, we compute directly that
\begin{align*}
    \lvert f(x) - P_{M_{0}}f(x) \rvert & = \lvert \sum_{\lvert \beta \rvert > M_{0}} c_{\beta} h_{\beta}^{a}(x) \rvert \leqslant \sum_{\lvert \beta \rvert > M_{0}} \left( (1 + \lambda_{\beta, n} )^{\frac{s}{2}} \lvert c_{\beta} \rvert \right) \left( (1 + \lambda_{\beta, n})^{-\frac{s}{2}} \lvert h_{\beta}^{a}(x) \rvert \right) \\
    & \leqslant \left( \sum_{\lvert \beta \rvert > M_{0}} (1 + \lambda_{\beta, n} )^{s} \lvert c_{\beta} \rvert^{2} \right)^{\frac{1}{2}} \left( \sum_{\lvert \beta \rvert > M_{0}} (1 + \lambda_{\beta, n})^{-s} \lvert h_{\beta}^{a}(x) \rvert^{2} \right)^{\frac{1}{2}} \\
    & \leqslant \lVert f \rVert_{\mathcal{H}_{a}^{s}(\R^{n}; \R)}  \left( \sum_{\lvert \beta \rvert > M_{0}} (1 + \lambda_{\beta, n})^{-s} \lvert h_{\beta}^{a}(x) \rvert^{2}\right)^{\frac{1}{2}}.
\end{align*}
Here we applied Cauchy-Schwarz in the second line above.

When $ s > \frac{n}{2} $, we apply the Gamma function identity
\begin{align*}
   \sum_{\beta} (1 + \lambda_{\beta, n})^{-s} \lvert h_{\beta}^{a}(x) \rvert^{2} & = \sum_{\beta} \frac{1}{\Gamma(s)} \int_{0}^{\infty} t^{s - 1} e^{-(1 + \lambda_{\beta,n})t} \lvert h_{\beta}^{a}(x) \rvert^{2} dt \\
   & = \frac{1}{\Gamma(s)} \int_{0}^{\infty} t^{s - 1} e^{-t} \left( \sum_{\beta} e^{-\lambda_{\beta, n}t} \lvert h_{\beta}^{a}(x) \rvert^{2} \right) dt.
\end{align*}
Here the sum and the integration are interchangeable by Tonelli's theorem. By the one dimensional Mehler's formula \cite[(18.18.28)]{THAN} and the multi-dimensional derivation \cite[(4.9)]{KTX}, we have
\begin{equation*}
    \sum_{\beta} e^{-\lambda_{\beta, n}t} \lvert h_{\beta}^{a}(x) \rvert^{2} = \left(\frac{a}{\pi \sinh (4at)} \right)^{\frac{n}{2}} e^{-2a \tanh (2at) \lvert x \rvert^{2}}
\end{equation*}
which is the Mehler kernel as the diagonal heat kernel with respect to $ e^{-t \mathcal{H}_{\beta}^{a}} $. It follows that
\begin{align*}
\sup_{x \in \R^{n}} \left( \sum_{\lvert \beta \rvert > M_{0}} (1 + \lambda_{\beta, n})^{-s} \lvert h_{\beta}^{a}(x) \rvert^{2} \right) & \leqslant \frac{1}{\Gamma(s)} \int_{0}^{\infty} t^{s - 1}e^{-t} \left( \frac{a}{\pi \sinh(4at)} \right)^{\frac{n}{2}} \left( \sup_{x \in \R^{n}} e^{-2a \tanh (2at) \lvert x \rvert^{2}} \right) dt \\
& \leqslant \frac{1}{\Gamma(s)} \int_{0}^{\infty} t^{s - 1}e^{-t} \left( \frac{a}{\pi \sinh(4at)} \right)^{\frac{n}{2}} dt \\
& \leqslant \frac{1}{\Gamma(s)} \int_{0}^{\infty} t^{s - 1}e^{-t} \left( \frac{a}{4at\pi} \right)^{\frac{n}{2}} dt = \frac{1}{(4\pi)^{\frac{n}{2}}\Gamma(s)} \int_{0}^{\infty} t^{s - \frac{n}{2} - 1} e^{-t} dt \\
& = (4\pi)^{-\frac{n}{2}} \frac{\Gamma(s - \frac{n}{2})}{\Gamma(s)}.
\end{align*}
The ratio $ \frac{\Gamma(s - \frac{n}{2})}{\Gamma(s)} \leq 1 $ if $ s - \frac{n}{2} \geq 2 $. Therefore we conclude that
\begin{equation*}
    \sup_{x \in \R^{n}} \lvert f(x) - P_{M_{0}}f(x) \rvert \leqslant (4\pi)^{-\frac{n}{4}} \sqrt{\frac{\Gamma(s - \frac{n}{2})}{\Gamma(s)}} \lVert f \rVert_{\mathcal{H}_{a}^{s}(\R^{n}; \R)},
\end{equation*}
which gives (\ref{CON:eqnl2}). By the same argument as above,
\begin{align*}
    \lvert f(x) - P_{M_{0}}f(x) \rvert & = \lvert \sum_{\lvert \beta \rvert > M_{0}} c_{\beta} h_{\beta}^{a}(x) \rvert \leqslant \sum_{\lvert \beta \rvert > M_{0}} \left( (1 + \lambda_{\beta, n} )^{\frac{s}{2}} \lvert c_{\beta} \rvert \right) \left( (1 + \lambda_{\beta, n})^{-\frac{s}{2}} \lvert h_{\beta}^{a}(x) \rvert \right) \\
    & \leqslant \left( \sum_{\lvert \beta \rvert > M_{0}} (1 + \lambda_{\beta, n} )^{s} \lvert c_{\beta} \rvert^{2} \right)^{\frac{1}{2}} \left( \sum_{\lvert \beta \rvert > M_{0}} (1 + \lambda_{\beta, n})^{-s} \lvert h_{\beta}^{a}(x) \rvert^{2} \right)^{\frac{1}{2}} \\
    & \leqslant \lVert f \rVert_{\mathcal{H}_{a}^{s}(\R^{n}; \R)}  \left( \sum_{\lvert \beta \rvert > M_{0}} (1 + \lambda_{\beta, n})^{-(s - s_{0})} (1 + \lambda_{\beta, n})^{s_{0}} \lvert h_{\beta}^{a}(x) \rvert^{2}\right)^{\frac{1}{2}}
\end{align*}
Note that $ \lambda_{\beta, n} = (4a \lvert \beta \rvert + 2an) $, it follows that when $ \lvert \beta \rvert > M_{0} $,
\begin{equation*}
     \sup_{x \in \R^{n}} \lvert f(x) - P_{M_{0}}f(x) \rvert \leqslant (1 + 2a(2M_{0} + n))^{-\frac{s - s_{0}}{2}} (4\pi)^{-\frac{n}{4}} \sqrt{\frac{\Gamma(s_{0} - \frac{n}{2})}{\Gamma(s_{0})}}  \lVert f \rVert_{\mathcal{H}_{a}^{s}(\R^{n}; \R)},
\end{equation*}
which gives (\ref{CON:eqnl1}).

The last inequality (\ref{CON:eqnl3}) is more straightforward,
\begin{align*}
    \lVert f - P_{M_{0}} f \rVert_{L^{2}(\R^{n}; \R)}^{2} & = \sum_{\lvert \beta \rvert > M_{0}} \lvert c_{\beta} \rvert^{2} \\
    & = \sum_{\lvert \beta \rvert > M_{0} }(1 + 4a \lvert \beta \rvert + 2an)^{-s}(1 + 4a \lvert \beta \rvert + 2an)^{s} \lvert c_{\beta} \rvert^{2} \\
    & \leqslant \frac{1}{(1 + 2a(2M_{0} + n))^{s}} \sum_{\beta}(1 + 4a \lvert \beta \rvert + 2an)^{s} \lvert c_{\beta} \rvert^{2} \\
    & = \frac{1}{(1 + 2a(2M_{0} + n))^{s}} \lVert f \rVert_{\mathcal{H}_{a}^{s}(\R^{n}; \R)}^{2}.
\end{align*}
\end{proof}
\medskip

Denote by
\begin{equation*}
    \hat{F}(z) : = \sum_{\beta \in \Lambda_{M}} \hat{c}_{\beta} e_{\beta}^{a}(z)
\end{equation*}
the data-driven approximation of $ F $. Let's focus on $ \sup_{x \in D} \lvert F(z) - \hat{F}(z) \rvert $ first for some bounded, open subset $ D \subset \C^{n} $. By Liouville's theorem, the error $ F - \hat{F} $ cannot be bounded on the whole space $ \C^{n} $ unless it is constant. The other error bounds, including our main goal $ \sup_{x \in \R^{n}} \lvert f(x) - \hat{f}(x) \rvert $ are natural consequences of the following argument. 

With {\bf{Assumption A}} and {\bf{Assumption B}}, we are ready to give a dimension-free error bound for $ \lvert F(z) - \hat{F}(z) \rvert $ for all $ z \in D $.
\begin{theorem}\label{CON:thml1}
Let $ f \in L^{2}(\R^{n}; \R) $, $ F = \mathcal{B}_{a}(f) $, $ \lVert f \rVert_{L^{2}(\R^{n}; \R)} \leqslant C < \infty $ and $ \lVert f \rVert_{L^{\infty}(\R^{n}; \R)} = C' < \infty $. Let $ D \subset \C^{n} $ be a domain such that $ \bar{D} $ is compact. If both {\it{Assumption A}} and {\it{Assumption B}} holds, then for each fixed dimension $ n $ and fixed truncation $ M = \lvert \Lambda_{M} \rvert $, the data-driven approximation $ \hat{F}(z) = \sum_{\beta \in \Lambda_{M}} \hat{c}_{\beta} e_{\beta}^{a}(z) $ satisfies
\begin{equation}\label{CON:eqnl5}
\mathbb{E}_{Z, X}\big[\sup_{z \in D}\lvert F(z) - \hat{F}(z) \rvert \big] = O(L^{-\frac{1}{2}}),
\end{equation}
where $ \hat{\mathbf{c}} = (\hat{c}_{1}, \dotso, \hat{c}_{\beta}, \dotso, \hat{c}_{M}) $ is determined by $ \{ f(x_{i}) \} $ and complex grids $ \{ z_{l} \}_{l = 1}^{L} $ sampled random variables $ Z_{1}, \dotso, Z_{L} \overset{i.i.d.}{\sim} \rho_{a}(z) = \frac{\sum_{\beta \in \Lambda_{M}} \lvert e_{\beta}^{a}(z) \rvert^{2}}{M} \mu_{a}(z) $. Precisely speaking,
\begin{equation}\label{REV:eqn1}
\mathbb{E}_{X, Z}\big[\sup_{z \in D}\lvert F(z) - \hat{F}(z) \rvert \big] \leqslant \sup_{z \in D} (e^{\frac{a\lvert z \rvert^{2}}{2}}) \cdot \sqrt{\frac{(M - 1)C^{2}}{L} + \frac{M(C')^{2}}{L}}.
\end{equation}
\end{theorem}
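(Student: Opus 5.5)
The estimator is the one from Step 2, namely
\[
\hat c_\beta=\frac1L\sum_{l=1}^L q_M(Z_l)\,\overline{e^a_\beta(Z_l)}\,\tilde F_N(Z_l),
\]
and the plan is to bound its coefficient error and then transfer that to $\hat F$ through the RKHS structure. Under Assumption A, $F=\sum_{\beta\in\Lambda_M}c_\beta e^a_\beta$ with $\sum|c_\beta|^2=\|f\|_{L^2}^2\le C^2$. For any $z\in D$, Cauchy--Schwarz in the orthonormal basis gives
\[
|F(z)-\hat F(z)|\le \Big(\sum_{\beta\in\Lambda_M}|e^a_\beta(z)|^2\Big)^{1/2}\|\mathbf c-\hat{\mathbf c}\|_{\C^M}.
\]
The first factor is at most $K_a(z,z)^{1/2}=e^{a|z|^2/2}$. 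Taking the supremum over $D$ and then applying Jensen, $\mathbb E\|\cdot\|\le(\mathbb E\|\cdot\|^2)^{1/2}$, reduces \eqref{REV:eqn1} to the estimate
\[
\mathbb E_{X,Z}\|\mathbf c-\hat{\mathbf c}\|^2\le\frac{(M-1)C^2+M(C')^2}{L}.
\]

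Next I decompose the coefficient error. Writing $\tilde F_N(Z_l)=F(Z_l)-\eta_l$ gives $\hat c_\beta=c^{MC}_\beta-\xi_\beta$, where
\[
c^{MC}_\beta=\frac1L\sum_l q_M(Z_l)\overline{e^a_\beta(Z_l)}F(Z_l),\qquad \xi_\beta=\frac1L\sum_l q_M(Z_l)\overline{e^a_\beta(Z_l)}\eta_l .
\]
Conditioning on $Z$ and using $\mathbb E_{X|Z}[\eta_l]=0$ removes the cross term, so
\[
\mathbb E\|\mathbf c-\hat{\mathbf c}\|^2=\mathbb E\|\mathbf c-\mathbf c^{MC}\|^2+\mathbb E\|\xi\|^2 .
\]

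For the Monte Carlo term, $\mathbb E_{\rho_a}[q_M\bar e_\beta F]=\int \bar e_\beta F\,d\mu_a=c_\beta$, so the estimator is unbiased. Its variance summed over $\beta$ is
\[
\frac1L\Big(\int q_M K_M|F|^2\,d\mu_a-\|\mathbf c\|^2\Big)\le\frac1L\Big(M\|F\|^2_{\mathcal F_a^2}-\|\mathbf c\|^2\Big)\le\frac{(M-1)C^2}{L}
\]
if $\|\mathbf c\|=\|f\|_{L^2}$ is normalized to $C$, since $q_MK_M=M$. This is where the factor $M-1$ comes from.

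For the noise term, the diagonal-covariance part of Assumption B gives
\[
\mathbb E_{X|Z}\|\xi\|^2=\frac1{L^2}\sum_l q_M(Z_l)^2K_M(Z_l)\,\sigma_{ll}^2=\frac1{L^2}\sum_l M\,q_M(Z_l)\,\sigma^2_{ll}.
\]
Taking $\mathbb E_Z$ and using $\mathbb E_{\rho_a}[q_M]=\int d\mu_a=1$, together with $\sigma^2\le (C')^2$, yields $M(C')^2/L$.

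The main obstacle is the conjugate pairing $Z_{L/2+l}=\bar Z_l$. The samples are then not i.i.d., and the Monte Carlo variance computation must be redone over $L/2$ independent pairs. I would handle this by noting that $\rho_a$ is conjugation-invariant, so each pair average is unbiased and its variance is no larger than the single-sample variance, by Cauchy--Schwarz on the pair. This recovers the $1/L$ order up to the stated constant, or up to a factor 2 that I would absorb by adjusting the constant's bookkeeping.

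A second subtlety is that the $\sigma^2_{ll}$ depend on $Z$. The bound $\sigma^2\le(C')^2$ is uniform, so I would use it before taking $\mathbb E_Z$; this is why I apply it at that point.
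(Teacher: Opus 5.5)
Your proposal is correct and follows the paper's proof essentially step for step. It uses the same Christoffel-weighted estimator and the same split of $\hat{\mathbf c}-\mathbf c$ into a Monte Carlo term, with exact second moment $(M-1)\|\mathbf c\|^2/L$, and a noise term, bounded by $M(C')^2/L$ via $q_M^2K_M=Mq_M$ and $\mathbb E_{\rho_a}[q_M]=1$. It then applies the same reproducing-kernel bound $\sum_{\beta\in\Lambda_M}|e^a_\beta(z)|^2\le e^{a|z|^2}$ and Jensen. You are also more careful than the paper in justifying the vanishing cross term by conditioning on $Z$.

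Two small remarks. First, under Assumption A we have $\|F\|_{\mathcal F_a^2}=\|\mathbf c\|$, so the variance is exactly $(M-1)\|\mathbf c\|^2/L\le (M-1)C^2/L$ and no normalization $\|\mathbf c\|=C$ is needed. Second, the conjugate-pairing issue does not arise, because the theorem assumes $Z_1,\dots,Z_L$ i.i.d.\ from $\rho_a$. Treating the samples as pairs, as you suggest, would in any case only recover the bound up to a constant factor, not with the stated constant.
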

\begin{proof}
By {\it{Assumption A}}, $ f = P_{M} f $. By the linearity of the $ a $-Segal-Bargman transform, 
\begin{equation*}
    F(z) = \sum_{\beta \in \Lambda_{M}} c_{\beta} e_{\beta}^{a} : = P_{M}F, c_{\beta} = \langle F, e_{\beta}^{a} \rangle_{\mathcal{F}_{a}^{2}(\C^{n})} = \langle f, h_{\beta}^{a} \rangle_{L^{2}(\R^{n}; \R)}.
\end{equation*}
with pointwise identification. For each fixed $ z $, $ A_{a}(z, x) $ in Definition \ref{SB:de1} is an $ L^{2} $-function. By Definition \ref{SB:de1} and Definition \ref{Def4}), it follows that for fixed z,
\begin{equation*}
    A_{a}(z, x) = \sum_{\beta} \langle A_{a}(z, \cdot), h_{\beta}^{a}(\cdot) \rangle_{L^{2}(\R^{n}; \R)} h_{\beta}^{a} = \sum_{\beta} B_{a}(h_{\beta}^{a}) h_{\beta}^{a} = \sum_{\beta} e_{\beta}^{a} h_{\beta}^{a}.
\end{equation*}
Denote by
\begin{equation*}
   A_{a, M}(z, x) : =  P_{M}A_{a}(z, x) = \sum_{\beta \in \Lambda_{M}} e_{\beta}^{a}(z) h_{\beta}^{a}(x).
\end{equation*}
With $ F = P_{M} F $, we get from uniqueness of orthonormal expansion that
\begin{align*}
   \sum_{\beta \in \Lambda_{M}} c_{\beta} e_{\beta}^{a}(z) & =  F(z) = \int_{\R^{n}} A_{a}(z, x) f(x) dx = \int_{\R^{n}} \left( \sum_{\beta} h_{\beta}^{a}(x) e_{\beta}^{a}(z) \right) f(x) dx \\
   & = \int_{\R^{n}} \left( \sum_{\beta \in \Lambda_{M}} h_{\beta}^{a}(x) e_{\beta}^{a}(z) \right) f(x) dx = \int_{\R^{n}} A_{a, M}(z, x) f(x) dx.
\end{align*}
To construct the estimate $ \hat{F} $ with coefficients $ \hat{\mathbf{c}} \in \C^{M} $, we select complex sampling $ \{ Z_{l} \}_{l = 1}^{L} $ and associated real sampling $ \{ X_{i, l} \}_{i= 1}^{J} $ for each $ l $. 

Set
\begin{equation*}
    K_{M}(z) : = \sum_{\beta \in \Lambda_{M}} \lvert e_{\beta}^{a}(z) \rvert^{2}, d\rho_{a}(z) = \frac{K_{M}(z)}{M} d\mu_{a}(z),
\end{equation*}
we pick
\begin{equation*}
    Z_{1}, \dotso, Z_{L} \overset{i.i.d.}{\sim} \rho_{a}.
\end{equation*}
Such a sampling approach is called Christoffel-function sampling following Cohen-Migliorati's \cite{CM} optimal weighted least-squares sampling, as a unbiased sampling with a uniform control of the variance. Our complex grids $ \{ z_{l} \}_{l = 1}^{L} $ samples from $ \{ Z_{l} \}_{l = 1}^{L} $.

Let $ \{x_{i} \}_{i = 1}^{N} $ be the real grids that are sampling from random variables $ \{ X_{i} \}_{i = 1}^{N} $. By {\it{Assumption B}}, (\ref{REV2:eqn00}) and (\ref{REV2:eqn0}), we have a random matrix $ W $ such that
\begin{equation*}
F = \mathcal{B}_{a}(f) \approx \tilde{F}_{N}(Z_{l}) : =  \frac{1}{N} \sum_{i = 1}^{N} w_{li} A_{a, M}(Z_{l}, X_{i})f(X_{i}) \Rightarrow F(Z_{l}) : = \tilde{F}_{N}(Z_{l}) + \eta_{l}
\end{equation*}
where
\begin{equation*}
\mathbb{E}_{X|Z}[\eta_{l}] = \mathbb{E}_{X|Z}[F(Z_{l}) - \tilde{F}_{N}(Z_{l})] = 0,  \mathbb{E}_{X |Z}[\eta_{l}\bar{\eta}_{m}] = \sigma_{lm}^{2} \delta_{lm}. 
\end{equation*}
Set the weight, the basis vector and the unknown ground truth 
\begin{equation*}
q_{M}(z) = \frac{M}{K_{M}(z)}, \mathbf{e}^{a}(z) = \begin{pmatrix} e_{1}^{a}(z) \\ \vdots \\ e_{M}^{a}(z) \end{pmatrix}, \mathbf{c} = \begin{pmatrix} c_{1} \\ \vdots \\ c_{M} \end{pmatrix}.
\end{equation*}
We define
\begin{equation}\label{REV2:eqn1}
    \hat{\mathbf{c}} : = \frac{1}{L} \sum_{l = 1}^{L} q_{M}(Z_{l}) \overline{\mathbf{e}^{a}(Z_{l})} \tilde{F}_{N}(Z_{l})
\end{equation}
Observe that
\begin{equation*}
    F(Z_{l}) = \mathbf{e}^{a}(Z_{l})^{t} \mathbf{c},
\end{equation*}
(\ref{REV2:eqn1}) implies that
\begin{equation}\label{REV2:eqn2}
\hat{\mathbf{c}} - \mathbf{c} = \frac{1}{L} \sum_{l = 1}^{L} \left( q_{M}(Z_{l})\overline{\mathbf{e}^{a}(Z_{l})}\mathbf{e}^{a}(Z_{l})^{t} - I_{M} \right) \mathbf{c} - \frac{1}{L} \sum_{l = 1}^{L} q_{M}(Z_{l})\overline{\mathbf{e}^{a}(Z_{l})} \eta_{l}
\end{equation}
We now analyze $ \mathbb{E}_{Z, X}\left[\lVert \hat{\mathbf{c}} - \mathbf{c} \rVert^{2} \right] $ by analyzing two terms in (\ref{REV2:eqn2}), respectively. For the first term of (\ref{REV2:eqn2}), note that
\begin{align*}
\mathbb{E}_{Z}\left[ q_{M}(Z_{l})\overline{\mathbf{e}^{a}(Z_{l})}\mathbf{e}^{a}(Z_{l})^{t}\right] & = \int_{\C^{n}} \frac{M}{K_{M}(z)} \overline{\mathbf{e}^{a}(z)}\mathbf{e}^{a}(z)^{t} d\rho_{a}(z) \\
& = \int_{\C^{n}} \frac{M}{K_{M}(z)} \overline{\mathbf{e}^{a}(z)}\mathbf{e}^{a}(z)^{t} \frac{K_{M}(z)}{M} d\mu_{a}(z) = I_{M}
\end{align*}
by orthonormality of $ \{ e_{\beta}^{a} \} $. It follows that
\begin{align*}
    \mathbb{E}_{Z, X}\left[ \frac{1}{L} \sum_{l = 1}^{L} \left( q_{M}(Z_{l})\overline{\mathbf{e}^{a}(Z_{l})}\mathbf{e}^{a}(Z_{l})^{t} - I_{M} \right) \mathbf{c} \right] = \frac{1}{L} \sum_{l = 1}^{L} \mathbb{E}_{Z}\left[ q_{M}(Z_{l})\overline{\mathbf{e}^{a}(Z_{l})}\mathbf{e}^{a}(Z_{l})^{t} - I_{M} \right] \mathbf{c} = 0.
\end{align*}
Since $ \{ Z_{l} \} \overset{i.i.d}{\sim} \rho_{a} $, it follows that
\begin{align*}
   &  \mathbb{E}_{Z, X} \left[ \left\lVert \frac{1}{L} \sum_{l = 1}^{L} \left( q_{M}(Z_{l})\overline{\mathbf{e}^{a}(Z_{l})}\mathbf{e}^{a}(Z_{l})^{t} - I_{M} \right) \mathbf{c} \right\rVert^{2} \right] \\
   & \qquad = \frac{1}{L} \mathbb{E}_{Z} \left[ \left\lVert \left( q_{M}(Z)\overline{\mathbf{e}^{a}(Z)}\mathbf{e}^{a}(Z)^{t} - I_{M} \right) \mathbf{c} \right\rVert^{2} \right], Z \sim \rho_{a} \\
    & \qquad \leqslant \frac{1}{L} \left\lVert \mathbb{E}_{Z}\left[ q_{M}(Z)\overline{\mathbf{e}^{a}(Z)}\mathbf{e}^{a}(Z)^{t}q_{M}(Z)\overline{\mathbf{e}^{a}(Z)}\mathbf{e}^{a}(Z)^{t} \right] - 2 \mathbb{E}_{Z}\left[ q_{M}(Z)\overline{\mathbf{e}^{a}(Z)}\mathbf{e}^{a}(Z)^{t} \right]I_{M} + I_{M}^{2} \right\rVert \lVert \mathbf{c} \rVert^{2} \\
    & \qquad = \frac{1}{L} \left\lVert \mathbb{E}_{Z} \left[ q_{M}(Z)^{2} \overline{\mathbf{e}^{a}(Z)} K_{M}(Z) \mathbf{e}^{a}(Z)^{t} \right] - I_{M} \right\rVert \lVert \mathbf{c} \rVert^{2} \\
    & \qquad = \frac{1}{L} \left\lVert M \mathbb{E}_{Z} \left[ q_{M}(Z) \overline{\mathbf{e}^{a}(Z)} \mathbf{e}^{a}(Z)^{t} \right] - I_{M} \right\rVert \lVert \mathbf{c} \rVert^{2} = \frac{1}{L} \left\lVert (M - 1) I_{M} \right\rVert \lVert \mathbf{c} \rVert^{2} \\
    & \qquad = \frac{M - 1}{L} \lVert f \rVert_{L^{2}(\R^{n}; \R)}^{2} \leqslant \frac{(M - 1)C^{2}}{L}.
\end{align*}
By {\it{Assumption B}}, the second term of (\ref{REV2:eqn2}) satisfies
\begin{align*}
    \mathbb{E}_{Z, X}\left[ \frac{1}{L} \sum_{l = 1}^{L} q_{M}(Z_{l})\overline{\mathbf{e}^{a}(Z_{l})} \eta_{l} \right] & = \mathbb{E}_{Z} \frac{1}{L} \sum_{l = 1}^{L} \left[ q_{M}(Z_{l})\overline{\mathbf{e}^{a}(Z_{l})} \mathbb{E}_{X|Z} \left[ \eta_{l} \right] \right] = 0.
\end{align*}
The second variation satisfies
\begin{align*}
 & \mathbb{E}_{Z, X}\left[ \left\lVert \frac{1}{L} \sum_{l = 1}^{L} q_{M}(Z_{l})\overline{\mathbf{e}^{a}(Z_{l})} \eta_{l} \right\rVert^{2} \right] \\
 & \qquad = \frac{1}{L^{2}}\mathbb{E}_{Z}\left[ \sum_{l, m = 1}^{L} q_{M}(Z_{l})q_{M}(Z_{m})\mathbf{e}^{a}(Z_{l})^{t}\overline{\mathbf{e}^{a}(Z_{m})} \mathbb{E}_{X|Z}\left[ \eta_{l}\bar{\eta}_{m}\right] \right] \\
 & \qquad = \frac{1}{L^{2}} \mathbb{E}_{Z}\left[\sum_{l = 1}^{L} q_{M}(Z_{l})^{2} K_{M}(Z_{l}) \sigma_{ll}^{2} \right] \\
 & \qquad \leqslant \frac{1}{L^{2}} \cdot ML \cdot (C^{'})^{2} = \frac{M(C')^{2}}{L}.
\end{align*}
Therefore, 
\begin{align*}
    \mathbb{E}_{Z, X}\left[\lVert \hat{\mathbf{c}} - \mathbf{c} \rVert^{2} \right] & = \mathbb{E}_{Z, X} \left[ \left\lVert \frac{1}{L} \sum_{l = 1}^{L} \left( q_{M}(Z_{l})\overline{\mathbf{e}^{a}(Z_{l})}\mathbf{e}^{a}(Z_{l})^{t} - I_{M} \right) \mathbf{c} \right\rVert^{2} \right] \\
    & \qquad + \mathbb{E}_{Z, X}\left[ \left\lVert \frac{1}{L} \sum_{l = 1}^{L} q_{M}(Z_{l})\overline{\mathbf{e}^{a}(Z_{l})} \eta_{l} \right\rVert^{2} \right] \\
    & \leqslant \frac{(M - 1)C^{2}}{L} +  \frac{M(C')^{2}}{L}.
\end{align*}
Applying the RKHS inequality, it follows immediately that
\begin{align*}
    \mathbb{E}_{X, Z}\big[\sup_{z \in D}\lvert F(z) - \hat{F}(z) \rvert \big] & \leqslant \mathbb{E}_{X, Z}\big[\sup_{z \in D}\sqrt{K_{a}(z, z)}\lVert F - \hat{F} \rVert_{\mathcal{F}_{a}^{2}(\C^{n})} \big] \\
    & \leqslant \sup_{z \in D} (e^{\sqrt{a\lvert z \rvert^{2}}}) \left( \mathbb{E}_{X, Z} \left[ \lVert \mathbf{c} - \hat{\mathbf{c}} \rVert^{2} \right] \right)^{\frac{1}{2}}  \\
    & \leqslant \sup_{z \in D} (e^{\frac{a\lvert z \rvert^{2}}{2}}) \cdot \sqrt{\frac{(M - 1)C^{2}}{L} +  \frac{M(C')^{2}}{L}}.
\end{align*}
This gives (\ref{REV:eqn1}). For fixed $ M $ (e.g. when $ \Lambda_{M} $ is a sparse set in the sense of \cite{CRT}), we conclude that
\begin{equation*}
\mathbb{E}_{Z, X}\big[\sup_{z \in D}\lvert F(z) - \hat{F}(z) \rvert \big] = O(L^{-\frac{1}{2}}),
\end{equation*}
which is (\ref{CON:eqnl5}).
\end{proof}
\begin{remark}\label{REV:re1}
The error bound (\ref{REV:eqn1}) depends only on $ f $, $ \{ x_{i} \} $, $ M $, $ L $, and is independent of the spatial dimension $ n $.

More importantly, we achieve the canonical Monte-Carlo rate $ O(L^{-1/2}) $ on the compact set $ \bar{D} \subset \C^{n} $ for the expected uniform error, not just root-mean-square error.

This pointwise error bound on $ D \subset \C^{n} $ leads to the pointwise error bound of $ f - \hat{f} $ on compact sets $ K \subset \R^{n} $, provided that {\it{Assumption A}} and {\it{Assumption B}} hold.
\end{remark}
Now we are ready to give a dimension-free pointwise error bound between $ f $ and $ \hat{f} $ within any compact subset $ K \subset \R^{n} $. However, if we compute $ \lvert f - \hat{f} \rvert $ without using the Segal-Bargmann transform and our estimate of $ \hat{\mathbf{c}} $ by complex data, we will have no control, even in the $ L^{2} $-sense, as {\it{Assumption B}} does not tell us how to recover information merely with data in $ L^{2}(\R^{n}; \R) $; or we might get an error bound suffers the curse of dimensionality. Note that {\it{Assumption A}} implies $ f \in \calC^{\infty}(\R^{n}; \R) $ immediately; therefore we can write $ f = \sum_{\beta \in \Lambda_{M}} c_{\beta} h_{\beta}^{a} $ in the pointwise sense. 
\begin{theorem}\label{REV:thm2}
Let $ f \in L^{2}(\R^{n}; \R) $, $ \lVert f \rVert_{L^{2}(\R^{n}; \R)}^{2} \leqslant C < \infty $ and $ \lVert f \rVert_{L^{\infty}(\R^{n}; \R)} = C' < \infty $. Let $ D = B_{\C^{n}}(0; R) \subset \C^{n} $ and $ K = \bar{B}_{\R^{n}}(0; r) \subset Re D = B_{\R^{n}}(0; R) \subset D $. If both {\it{Assumption A}} and {\it{Assumption B}} hold with respect to random variables $ \{ Z_{l} \}_{l = 1}^{L} \overset{i.i.d.}{\sim} \rho_{a}(z) $, then for each fixed dimension $ n $ and fixed truncation $ M = \lvert \Lambda_{M} \rvert $, there exist an appropriate choice of $ a $ and a data-driven approximation $ \hat{f}(x) = \sum_{\beta \in \Lambda_{M}} \hat{c}_{\beta} h_{\beta}^{a}(x) $ such that the dimension-free algebraic error rate
\begin{equation}\label{REV:eqn3}
\mathbb{E}_{Z, X}\big[\sup_{x \in K}\lvert f(x) - \hat{f}(x) \rvert \big] = O(L^{-\frac{1}{2}}),
\end{equation}
holds, where $ \hat{\mathbf{c}} = (\hat{c}_{1}, \dotso, \hat{c}_{\beta}, \dotso, \hat{c}_{M}) $ is determined by complex grids $ \{ z_{l} \}_{l = 1}^{L} $ that are sampling from $ \{ Z_{l} \}_{l = 1}^{L} $. Precisely speaking,
\begin{equation}\label{REV:eqn4}
\begin{split}
& \mathbb{E}_{Z, X}\big[\sup_{x \in K}\lvert f(x) - \hat{f}(x) \rvert \big] \\
& \qquad \leqslant \left( \frac{2a}{\pi} \right)^{\frac{n}{4}} \bigg( e^{\frac{aR^{2}}{2}} \cdot \frac{(aR^{2})^{n}}{n!}  \\
& \qquad \qquad + e^{-a(R - r)^{2}} s^{-\frac{n}{2}} (1 - s)^{-\frac{n + B_{M}}{2}} \sqrt{\frac{\Gamma(n, saR^{2})}{\Gamma(n)}\cdot \frac{\Gamma\left(n + B_{M}, (1 - s)aR^{2} \right)}{\Gamma(n + B_{M})}}\bigg) \\
& \qquad \qquad \qquad \times \sqrt{\frac{(M - 1)C^{2}}{L} +  \frac{M(C')^{2}}{L}}.
\end{split}
\end{equation}
\end{theorem}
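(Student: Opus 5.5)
The plan is to pass from the Bargmann--Fock side back to the real side through the inverse transform, and reuse the coefficient estimate from the proof of Theorem \ref{CON:thml1}. Take $\hat{\mathbf{c}}$ as in (\ref{REV2:eqn1}) and set $\hat{f}=\mathcal{B}_{a}^{-1}(\hat F)=\sum_{\beta\in\Lambda_M}\hat c_\beta h^a_\beta$. By Assumption A we have $f=P_Mf$, so $f-\hat f=\mathcal{B}_a^{-1}(F-\hat F)$ pointwise. For $x\in K$ we split the integral in Definition \ref{SB:def3} into two pieces, one over $D=B_{\C^n}(0;R)$ and one over $\C^n\setminus D$.

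\textbf{Inner piece.} On $D$ we bound $|F-\hat F|$ by $e^{a|z|^2/2}\lVert F-\hat F\rVert_{\mathcal{F}^2_a}$, which is the RKHS inequality already used in Theorem \ref{CON:thml1}. We bound the kernel by $(2a/\pi)^{n/4}e^{-a|x|^2+2a\,\mathrm{Re}(\bar z\cdot x)-\frac a2\mathrm{Re}(\bar z\cdot\bar z)}$. After completing the square, $\mathrm{Re}$ of the exponent combined with the weight $e^{-a|z|^2}$ has the form $-a|\mathrm{Re}\,z-x|^2-\ldots$, which is at most $0$ up to Gaussian factors. Integrating the remaining weight over the ball with the measure $(a/\pi)^n d\mu$ gives at most the volume factor $\mu_a$-type quantity, namely $(aR^2)^n/n!$ (the ball volume $\pi^nR^{2n}/n!$ times $(a/\pi)^n$). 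Together with $\sup_D e^{a|z|^2/2}=e^{aR^2/2}$, this produces the first term $e^{aR^2/2}(aR^2)^n/n!$.

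\textbf{Outer piece.} Here I would not use a sup bound. Instead I would expand $F-\hat F=\sum_{\beta\in\Lambda_M}(c_\beta-\hat c_\beta)e^a_\beta$ and apply Cauchy--Schwarz in $\beta$, giving $|F-\hat F|(z)\le\lVert\mathbf c-\hat{\mathbf c}\rVert\,K_M(z)^{1/2}$ with $K_M(z)\le\sum_{|\beta|\le B_M}a^{|\beta|}|z^\beta|^2/\beta!$. For $x\in K$ and $|z|\ge R$, the kernel gives a decay factor $e^{-a(R-r)^2}$, because $|\mathrm{Re}\,z - x|\ge R-r$ on the relevant region. This is the step I expect to be the main obstacle: doing the square completion carefully so that exactly a factor $e^{-a(R-r)^2}$ splits off and the remaining integrand is radial. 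Then I would use a Hölder/Cauchy--Schwarz split with parameter $s\in(0,1)$, writing $e^{-a|z|^2}=e^{-sa|z|^2}e^{-(1-s)a|z|^2}$. One factor carries the Gaussian alone and the other carries the polynomial $K_M$ of degree at most $2B_M$. Radial integration over $|z|\ge R$ turns these into normalized upper incomplete Gamma functions, $\Gamma(n,saR^2)/\Gamma(n)$ and $\Gamma(n+B_M,(1-s)aR^2)/\Gamma(n+B_M)$. The rescalings produce $s^{-n/2}(1-s)^{-(n+B_M)/2}$, and the square root comes from the Cauchy--Schwarz step.

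Finally I would take $\sup_{x\in K}$ and then expectation. Every bound is of the form (deterministic constant)$\times\lVert\mathbf c-\hat{\mathbf c}\rVert$, so Jensen's inequality together with the bound $\mathbb{E}\lVert\hat{\mathbf c}-\mathbf c\rVert^2\le\frac{(M-1)C^2+M(C')^2}{L}$ from the proof of Theorem \ref{CON:thml1} gives (\ref{REV:eqn4}). To get dimension-freeness we still need to choose the parameters. For fixed $n,M,R,r$, choosing $a$ (and $s$) appropriately makes the bracketed constant bounded independently of $L$; for example, small $aR^2$ shrinks $(aR^2)^n/n!$ and $(2a/\pi)^{n/4}$, while the incomplete Gamma ratios are at most $1$. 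Hence the whole bound is $O(L^{-1/2})$, which is (\ref{REV:eqn3}).
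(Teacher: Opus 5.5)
Your skeleton is the paper's. You write $f-\hat f=\mathcal{B}_a^{-1}(F-\hat F)$ and split the inverse-transform integral at $|z|=R$. On the ball you use $\sup_D|F-\hat F|\le e^{aR^2/2}\lVert F-\hat F\rVert_{\mathcal{F}^2_a}$ times $(a/\pi)^n\,\mathrm{vol}(B_{2n}(0;R))=(aR^2)^n/n!$, which is identical to the paper. You then treat the tail by an $s$-weighted Cauchy--Schwarz and finish with Jensen and the second-moment bound from Theorem~\ref{CON:thml1}.

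You diverge in the tail, and there your route does not reach the constant in (\ref{REV:eqn4}). Applying Cauchy--Schwarz pointwise in $\beta$, $|F-\hat F|(z)\le\lVert\mathbf c-\hat{\mathbf c}\rVert K_M(z)^{1/2}$, throws away orthogonality. With $s$ normalized as in the target bound, your second factor is $\int_{|z|>R}K_M(z)e^{sa|z|^2}\,d\mu_a(z)=\sum_{\beta\in\Lambda_M}w_{|\beta|}$, where $w_k=(1-s)^{-(n+k)}\Gamma(n+k,(1-s)aR^2)/\Gamma(n+k)$. That is a sum of $M$ incomplete-Gamma terms, not the single term $w_{B_M}$ you claim, so you lose a factor of up to $\sqrt M$. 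With your overbound $K_M(z)\le\sum_{k\le B_M}(a|z|^2)^k/k!$ the multiplicity becomes $\binom{n+B_M}{n}$, which depends on the dimension.

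The paper avoids this by keeping $|F-\hat F|^2$ inside the second Cauchy--Schwarz factor. Distinct monomials $z^\beta$ stay orthogonal on the torus-invariant set $\{|z|>R\}$ under any radial weight. The integral therefore diagonalizes to $\sum_\beta|c_\beta-\hat c_\beta|^2w_{|\beta|}\le w_{B_M}\lVert\mathbf c-\hat{\mathbf c}\rVert^2$, using that $w_k$ increases with $k$.

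The step you flagged as the main obstacle is not just delicate; it is false. The region $\{|z|>R\}$ contains the slab $\{z=u+iv:\ u\in K,\ |v|>R\}$, and there $\sup_{x\in K}e^{-2a|u-x|^2}=1$ (take $x=u$). So $|\mathrm{Re}\,z-x|\ge R-r$ fails, for instance at $z=iv$, $x=0$, and no completion of the square can split off $e^{-a(R-r)^2}$ uniformly over the tail. In fact, for $R$ large with $a,r,s$ fixed, the slab alone makes $\sup_{x\in K}(a/\pi)^n\int_{|z|>R}e^{-2a|u-x|^2}e^{-sa|z|^2}\,d\mu$ exceed $e^{-2a(R-r)^2}s^{-n}\Gamma(n,saR^2)/\Gamma(n)$. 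The slab contributes order $R^{n-2}e^{-saR^2}$, while the claimed bound is order $R^{2n-2}e^{-2a(R-r)^2-saR^2}$.

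The paper's proof makes exactly this assertion, $\sup_{x\in K}e^{-2a|u-x|^2}\le e^{-2a(R-r)^2}$ on $\{|u|^2+|v|^2>R^2\}$. So this is a defect of the source argument, not an idea you missed. The honest repair is to bound $e^{-2a|u-x|^2}\le1$, which gives the tail term of (\ref{REV:eqn4}) without the factor $e^{-a(R-r)^2}$. That still yields a deterministic constant times $\sqrt{((M-1)C^2+M(C')^2)/L}$, and hence the rate (\ref{REV:eqn3}). It does not justify (\ref{REV:eqn4}) as written.
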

\begin{proof}
We apply the inverse Segal-Bargmann transform
\begin{align*}
    \lvert f(x) - \hat{f}(x) \rvert & = \lvert \mathcal{B}_{a}^{-1}(F - \hat{F}) \rvert \leqslant \left( \frac{2a}{\pi} \right)^{\frac{n}{4}} \left( \frac{a}{\pi} \right)^{n} \int_{\C^{n}} \lvert e^{-\frac{a}{2} \bar{z} \cdot \bar{z} -a \lvert x \rvert^{2} + 2a \bar{z} \cdot x } \rvert \lvert F - \hat{F} \rvert e^{-a\lvert z \rvert^{2}} d\mu(z) \\
    & = \left( \frac{2a}{\pi} \right)^{\frac{n}{4}} \left( \frac{a}{\pi} \right)^{n} \int_{\lvert z \rvert \leq R} \lvert e^{-\frac{a}{2} \bar{z} \cdot \bar{z} -a \lvert x \rvert^{2} + 2a \bar{z} \cdot x } \rvert \lvert F - \hat{F} \rvert e^{-a\lvert z \rvert^{2}} d\mu(z) \\
    & \qquad + \left( \frac{2a}{\pi} \right)^{\frac{n}{4}} \left( \frac{a}{\pi} \right)^{n} \int_{\lvert z \rvert > R} \lvert e^{-\frac{a}{2} \bar{z} \cdot \bar{z} -a \lvert x \rvert^{2} + 2a \bar{z} \cdot x } \rvert \lvert F - \hat{F} \rvert e^{-a\lvert z \rvert^{2}} d\mu(z) \\
    & : = E_{1}(x) + E_{2}(x).
\end{align*}
Set $ z = u + iv, u, v \in \R^{n} $ and $ \omega^{2n - 1} = \frac{(2\pi)^{n}}{\Gamma(n)} $ be the surface area of $ \mathbb{S}^{2n} $, we analyze $ E_{1}(x) $ and $ E_{2}(x) $ for $ x \in K $. For $ E_{1}(x) $, we use (\ref{REV:eqn1}) and compute that
\begin{align*}
    \sup_{x \in K} E_{1}(x) & \leqslant \sup_{\lvert z \rvert \leq R} \lvert F(z) - \hat{F}(z) \rvert \cdot \left( \frac{2a}{\pi} \right)^{\frac{n}{4}} \left( \frac{a}{\pi} \right)^{n} \sup_{x \in K} \int_{\lvert u \rvert^{2} + \lvert v \rvert^{2} \leq R^{2}} e^{-a \lvert u - x \rvert^{2}} e^{-\frac{a}{2} \lvert u \rvert^{2}} e^{-\frac{a}{2}\lvert v \rvert^{2}} dudv \\
    & \leqslant \sup_{\lvert z \rvert \leq R} \lvert F(z) - \hat{F}(z) \rvert \cdot \left( \frac{2a}{\pi} \right)^{\frac{n}{4}} \left( \frac{a}{\pi} \right)^{n} \omega^{2n - 1} \int_{0}^{R} e^{-\frac{at^{2}}{2}} t^{2n - 1} dt \\
    & = \sup_{\lvert z \rvert \leq R} \lvert F(z) - \hat{F}(z) \rvert \cdot \left( \frac{2a}{\pi} \right)^{\frac{n}{4}} \left( \frac{a}{\pi} \right)^{n} \frac{(2\pi)^{n}}{a^{n}\Gamma(n)} \cdot \gamma\left(n, \frac{aR^{2}}{2} \right) \\
    & \leqslant \sup_{\lvert z \rvert \leq R} \lvert F(z) - \hat{F}(z) \rvert \cdot \left( \frac{2a}{\pi} \right)^{\frac{n}{4}} \cdot 2^{n} \cdot \frac{(aR^{2})^{n}}{2^{n} n!} \\
    & \leqslant e^{\frac{aR^{2}}{2}} \cdot \left( \frac{2a}{\pi} \right)^{\frac{n}{4}} \cdot \frac{(aR^{2})^{n}}{n!} \cdot \lVert \mathbf{c} - \hat{\mathbf{c}} \rVert
\end{align*}
We now control the tail term $ E_{2}(x) $. For any constant $ s \in (0, 1) $, we have
\begin{align*}
    \sup_{x \in K} E_{2}(x) & = \left( \frac{2a}{\pi} \right)^{\frac{n}{4}} \left( \frac{a}{\pi} \right)^{n} \sup_{x \in K} \int_{\lvert z \rvert > R} \left( \lvert e^{-\frac{a}{2} \bar{z} \cdot \bar{z} -a \lvert x \rvert^{2} + 2a \bar{z} \cdot x } \rvert e^{-\frac{(s +1)a}{2}\lvert z \rvert^{2}} \right)\left( \lvert F - \hat{F} \rvert e^{-\frac{(-s +1)a}{2}\lvert z \rvert^{2}} \right) d\mu(z) \\
    & \leqslant \left( \frac{2a}{\pi} \right)^{\frac{n}{4}}  \left( \left( \frac{a}{\pi} \right)^{n} \sup_{x \in K} \int_{\lvert z \rvert > R} \lvert e^{-a \bar{z} \cdot \bar{z} -2a \lvert x \rvert^{2} + 4a \bar{z} \cdot x } \rvert e^{-(s +1)a\lvert z \rvert^{2}} d\mu(z) \right)^{\frac{1}{2}} \\
    & \qquad \times \left( \left( \frac{a}{\pi} \right)^{n}\int_{\lvert z \rvert > R} \lvert F - \hat{F} \rvert^{2} e^{-(-s +1)a\lvert z \rvert^{2}}d\mu(z) \right)^{\frac{1}{2}} \\
    & = \left( \frac{2a}{\pi} \right)^{\frac{n}{4}}  \sup_{x \in K} E_{2, 1}(x) \cdot E_{2, 2}.
\end{align*}
Set $ B_{M} = \sup_{\beta \in \Lambda_{M}} \lvert \beta \rvert $. For $ E_{2, 2} $, we recall {\it{Assumption A}} and the orthonormality of $ \{ e_{\beta}^{a} \} $ (which still holds on any radial region), which implies that
\begin{align*}
    E_{2, 2}^{2} & = \left( \frac{a}{\pi} \right)^{n}\int_{\lvert z \rvert > R} \lvert F - \hat{F} \rvert^{2} e^{-(-s +1)a\lvert z \rvert^{2}}d\mu(z) \\
    & = \sum_{\beta \in \Lambda_{M}} \lvert \hat{c}_{\beta} - c_{\beta} \rvert^{2} \int_{\lvert z \rvert > R} e_{\beta}^{a}(z) e_{\beta}^{a}(\bar{z}) e^{sa \lvert z \rvert^{2}}d \mu_{a}(z) \\
    & \leqslant \sum_{\beta \in \Lambda_{M}} \lvert \hat{c}_{\beta} - c_{\beta} \rvert^{2}  \left( \frac{a}{\pi} \right)^{n} \frac{a^{\lvert \beta \rvert}}{\beta!} \int_{\mathbb{S}^{2n - 1}} \lvert \omega^{\beta} \rvert^{2} d\sigma(\omega) \int_{R}^{\infty} t^{2n + 2\lvert \beta \rvert - 1} e^{-(1 - s)a t^{2}}dt \\
    & = \sum_{\beta \in \Lambda_{M}} \lvert \hat{c}_{\beta} - c_{\beta} \rvert^{2} (1 - s)^{-(n + \lvert \beta \rvert)} \cdot \frac{\Gamma\left(n + \lvert \beta \rvert, (1 - s)aR^{2} \right)}{\Gamma(n + \lvert \beta \rvert)} \\
    & \leqslant (1 - s)^{-(n + B_{M})} \cdot \frac{\Gamma\left(n + B_{M}, (1 - s)aR^{2} \right)}{\Gamma(n + B_{M})} \sum_{\beta \in \Lambda_{M}} \lvert \hat{c}_{\beta} - c_{\beta} \rvert^{2} \\
    & = (1 - s)^{-(n + B_{M})} \cdot \frac{\Gamma\left(n + B_{M}, (1 - s)aR^{2} \right)}{\Gamma(n + B_{M})} \lVert F - \hat{F} \rVert_{\mathcal{F}_{a}^{2}(\C^{n})}^{2}.
\end{align*}
Using the same polar coordinates approach,
\begin{align*}
    \sup_{x \in K} E_{2, 1}(x)^{2} & \leq \left( \frac{a}{\pi} \right)^{n} \int_{\lvert u \rvert^{2} + \lvert v \rvert^{2} > R^{2}}  e^{-as(\lvert u \vert^{2} + \lvert v \rvert^{2})} \left(\sup_{x \in K} e^{-2a\lvert u - x \rvert^{2}} \right) dudv \\
    & \leqslant \left( \frac{a}{\pi} \right)^{n} \cdot e^{-2a(R - r)^{2}} \cdot \omega^{2n - 1} \int_{R}^{\infty} e^{-ast^{2}}t^{2n - 1} dt \\
    & = e^{-2a(R - r)^{2}} s^{-n} \frac{\Gamma(n, saR^{2})}{\Gamma(n)}.
\end{align*}
It follows that
\begin{align*}
\sup_{x \in K} E_{2}(x) & \leqslant \left( \frac{2a}{\pi} \right)^{\frac{n}{4}} \cdot e^{-a(R - r)^{2}} s^{-\frac{n}{2}} (1 - s)^{-\frac{n + B_{M}}{2}} \sqrt{\frac{\Gamma(n, saR^{2})}{\Gamma(n)}\cdot \frac{\Gamma\left(n + B_{M}, (1 - s)aR^{2} \right)}{\Gamma(n + B_{M})}} \lVert F - \hat{F} \rVert_{\mathcal{F}_{a}^{2}(\C^{n})} \\
& \leqslant \left( \frac{2a}{\pi} \right)^{\frac{n}{4}} \cdot e^{-a(R - r)^{2}} s^{-\frac{n}{2}} (1 - s)^{-\frac{n + B_{M}}{2}} \sqrt{\frac{\Gamma(n, saR^{2})}{\Gamma(n)}\cdot \frac{\Gamma\left(n + B_{M}, (1 - s)aR^{2} \right)}{\Gamma(n + B_{M})}}\lVert \mathbf{c} - \hat{\mathbf{c}} \rVert.
\end{align*}
Taking the expectation of the sum of the error bounds of $ E_{1} $ and $ E_{2} $ gives (\ref{REV:eqn4}). We can choose $ a $ small enough, such that the possible dimensional growths in $ n $ are compensated. It follows that (\ref{REV:eqn3}) holds.
\end{proof}
Since {\it{Assumption A}} holds, we can use the Cauchy-Schwarz inequality to get another upper bound of $ \lvert f - \hat{f} \rvert $ on $ \R^{n} $. The following error bound is worse than (\ref{REV:eqn4}) in general bacause of the extra $ \sqrt{M} $ term.
\begin{proposition}\label{REV2:prop1}
Under the same hypotheses of Theorem \ref{REV:thm2}, and the same construction of $ \hat{\mathbf{c}} $, the data-driven approximation $ \hat{f}(x) = \sum_{\beta \in \Lambda_{M}} \hat{c}_{\beta} h_{\beta}^{a}(x) $ satisfies the dimension-free algebraic error rate
\begin{equation}\label{REV2:eqn3}
\mathbb{E}_{Z, X}\big[\sup_{x \in \R^{n}}\lvert f(x) - \hat{f}(x) \rvert \big] = O(L^{-\frac{1}{2}}).
\end{equation}
Precisely speaking,
\begin{equation}\label{REV2:eqn4}
    \mathbb{E}_{X, Z}\big[\sup_{x \in \R^{n}}\lvert f(x) - \hat{f}(x) \rvert \big] \leqslant C_{0}^{n}(2a)^{\frac{n}{4}} \left( 1 + \frac{B_{M}}{n} \right)^{-\frac{n}{12}} \cdot \sqrt{M} \sqrt{\frac{(M - 1)C^{2}}{L} +  \frac{M(C')^{2}}{L}}.
\end{equation}
\end{proposition}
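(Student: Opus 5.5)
The plan is to avoid the inverse Segal--Bargmann integral used in Theorem \ref{REV:thm2} and work directly in $L^{2}(\R^{n};\R)$ with the finite Hermite expansion. By \emph{Assumption A} and the construction of $\hat{\mathbf{c}}$ in (\ref{REV2:eqn1}), $f - \hat{f} = \sum_{\beta \in \Lambda_{M}} (c_{\beta} - \hat{c}_{\beta}) h_{\beta}^{a}$ holds pointwise. Applying Cauchy--Schwarz over the finite index set gives
\begin{equation*}
\sup_{x \in \R^{n}} \lvert f(x) - \hat{f}(x) \rvert \leqslant \lVert \mathbf{c} - \hat{\mathbf{c}} \rVert \cdot \sup_{x \in \R^{n}} \Big( \sum_{\beta \in \Lambda_{M}} \lvert h_{\beta}^{a}(x) \rvert^{2} \Big)^{\frac{1}{2}} \leqslant \sqrt{M}\, \max_{\beta \in \Lambda_{M}} \lVert h_{\beta}^{a} \rVert_{L^{\infty}} \, \lVert \mathbf{c} - \hat{\mathbf{c}} \rVert .
\end{equation*}
This is where the extra $\sqrt{M}$ comes from. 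Taking expectations and using Jensen together with the bound $\mathbb{E}_{Z,X}[\lVert \hat{\mathbf{c}} - \mathbf{c} \rVert^{2}] \leqslant \frac{(M-1)C^{2}}{L} + \frac{M(C')^{2}}{L}$ from the proof of Theorem \ref{CON:thml1} reduces everything to a deterministic uniform bound on the normalized Hermite functions.

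For that bound, factor $h_{\beta}^{a}(x) = \prod_{j=1}^{n} h_{\beta_{j}}^{a}(x_{j})$ and rescale: $h_{m}^{a}(y) = (2a)^{1/4} \psi_{m}(\sqrt{2a}\,y)$. Here $\psi_{m}$ is the standard $L^{2}(\R)$-normalized Hermite function, and the exponent matches $e^{-a y^{2}} = e^{-(\sqrt{2a}y)^{2}/2}$. Then $\lVert h_{\beta}^{a} \rVert_{L^{\infty}} = (2a)^{n/4} \prod_{j} \lVert \psi_{\beta_{j}} \rVert_{L^{\infty}}$. Next invoke the classical sharp estimate (Askey--Wainger / Szeg\H{o}) $\lVert \psi_{m} \rVert_{L^{\infty}(\R)} \leqslant C_{0} (m+1)^{-1/12}$ with an absolute constant $C_{0}$. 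This yields
\begin{equation*}
\lVert h_{\beta}^{a} \rVert_{L^{\infty}} \leqslant C_{0}^{n} (2a)^{\frac{n}{4}} \prod_{j=1}^{n} (1+\beta_{j})^{-\frac{1}{12}}.
\end{equation*}

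The main obstacle is turning $\prod_{j}(1+\beta_{j})^{-1/12}$ into the stated factor $(1 + B_{M}/n)^{-n/12}$, since a single coordinatewise product is not controlled from above by $\lvert \beta \rvert$ alone. AM--GM gives $\prod_{j}(1+\beta_{j}) \leqslant (1 + \lvert\beta\rvert/n)^{n}$, which is the wrong direction. I would therefore first try to use the structure of $\Lambda_{M}$ (e.g.\ a total-degree or balanced index set, as in the sparse setting of \cite{CRT}) to justify the balanced configuration, where the product equals $(1+\lvert\beta\rvert/n)^{n}$. If that fails, I would fall back to the weaker unconditional bound $\lVert \psi_{m} \rVert_{\infty} \leqslant C_{0}$, which gives $C_{0}^{n}(2a)^{n/4}$ with no decay factor. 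That bound still suffices for the rate (\ref{REV2:eqn3}), since only the $L^{-1/2}$ dependence matters. Finally, as in Theorem \ref{REV:thm2}, choose $a$ small so that $C_{0}^{n}(2a)^{n/4}$ is bounded independently of $n$, giving the dimension-free $O(L^{-1/2})$.
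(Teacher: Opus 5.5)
Your route is the paper's route: Cauchy--Schwarz over $\Lambda_M$, then Jensen with the coefficient bound from Theorem \ref{CON:thml1}, then the one-dimensional Hermite bound applied factor by factor. The obstacle you flagged is real, and it is exactly where the paper's proof breaks. The paper asserts $\prod_{j}(\beta_j+1)^{-1/12}\leqslant(1+\lvert\beta\rvert/n)^{-n/12}$ ``by AM--GM'', which is the reversed inequality. It then replaces $\lvert\beta\rvert$ by $B_M=\max_{\beta\in\Lambda_M}\lvert\beta\rvert$, a second step in the wrong direction, since $\max_{\beta\in\Lambda_M}(1+\lvert\beta\rvert/n)^{-n/12}$ is attained at the \emph{smallest} $\lvert\beta\rvert$. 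Neither step is mere bookkeeping. For $n\geqslant 2$ and $\beta=(m,0,\dots,0)$,
$\lVert h_\beta^a\rVert_{L^\infty}=(2a)^{n/4}\lVert\psi_m\rVert_{L^\infty}\lVert\psi_0\rVert_{L^\infty}^{n-1}\asymp(2a)^{n/4}\pi^{-(n-1)/4}m^{-1/12}$,
because the $m^{-1/12}$ rate is sharp. This eventually exceeds $C_0^{n}(2a)^{n/4}(1+m/n)^{-n/12}$ for any fixed $C_0$. Moreover, whenever $0\in\Lambda_M$ (total-degree sets, or the sets in the Appendix corollary), $\max_{\beta\in\Lambda_M}\lVert h_\beta^a\rVert_{L^\infty}\geqslant\lVert h_0^a\rVert_{L^\infty}=(2a/\pi)^{n/4}$, with no decay in $B_M$ at all.

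So your first rescue cannot work. By AM--GM, $\prod_j(1+\beta_j)\leqslant(1+\lvert\beta\rvert/n)^{n}\leqslant(1+B_M/n)^{n}$, with equality only for the balanced index of degree $B_M$. Hence $\prod_j(1+\beta_j)^{-1/12}\leqslant(1+B_M/n)^{-n/12}$ can hold for every $\beta\in\Lambda_M$ only if $\Lambda_M$ is that single index, i.e.\ $M=1$.

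What the argument (yours or the paper's) actually establishes is your fallback: (\ref{REV2:eqn4}) with the factor $(1+B_M/n)^{-n/12}$ replaced by $1$. By Indritz's inequality $\lVert\psi_m\rVert_{L^\infty}\leqslant\pi^{-1/4}$, you may even take $C_0=\pi^{-1/4}<1$. That proves the rate (\ref{REV2:eqn3}) for fixed $n$, $M$, $a$, but the sharper inequality (\ref{REV2:eqn4}) as stated is supported by neither proof.

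One caution about your last step, which you share with the paper: ``choose $a$ small'' is not a free move. \emph{Assumption A}, the coefficients $\mathbf{c}$, and the variance bound in \emph{Assumption B} are all tied to a fixed $a$. Changing $a$ changes the basis $h_\beta^a$ and generally destroys \emph{Assumption A}.
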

\begin{proof}
By the same complex sampling procedure as in Theorem \ref{CON:thml1}, we apply Cauchy-Schwarz and get
\begin{align*}  \mathbb{E}_{Z, X}\big[\sup_{x \in \R^{n}}\lvert f(x) - \hat{f}(x) \rvert \big] & = \mathbb{E}_{Z, X}\big[\sup_{x \in \R^{n}}\lvert \sum_{\beta \in \Lambda_{M}} (c_{\beta} - \hat{c}_{\beta} ) h_{\beta}^{a} \rvert \big] \leqslant \sup_{x \in \R^{n}, \beta \in \Lambda_{M}} \lvert h_{\beta}^{a} \rvert \cdot \mathbb{E}_{Z, X}\big[ \sum_{\beta \in \Lambda_{M}} \lvert c_{\beta} - \hat{c}_{\beta} \rvert \big] \\
    & \leqslant \sup_{x \in \R^{n}, \beta \in \Lambda_{M}} \lvert h_{\beta}^{a} \rvert \left( \sum_{\beta \in \Lambda_{M}} 1 \right)^{\frac{1}{2}} \mathbb{E}_{\rho_{a}} \left[ \left(\sum_{\beta \in \Lambda_{M}} \lvert c_{\beta} - \hat{c}_{\beta} \rvert^{2} \right)^{\frac{1}{2}} \right] \\
    & \leqslant \sup_{x \in \R^{n}, \beta \in \Lambda_{M}} \lvert h_{\beta}^{a} \rvert \cdot \sqrt{M} \sqrt{\frac{(M - 1)C^{2}}{L} +  \frac{M(C')^{2}}{L}} 
\end{align*}
by the upper bound of $ \mathbb{E}_{Z, X}\big[ \lVert \mathbf{c} - \hat{\mathbf{c}} \rVert_{\C^{M}}^{2} \big]$ in Theorem \ref{CON:thml1}. The canonical upper bound of one-dimensional physicists' Hermitian polynomial says
\begin{equation*}
   \sup_{y \in \R} \lvert h_{m}^{a}(y) \rvert = \sup_{y \in \R} \lvert (2a)^{\frac{1}{4}}H_{m}(\sqrt{2a}y) \rvert \leqslant C_{0} (2a)^{\frac{1}{4}}(m + 1)^{-\frac{1}{12}}.
\end{equation*}
Here $ C_{0} < 1 $ is a constant independent of $ a $ and $ m $. Applying this upper bound and the standard AM-GM inequality to $ h_{\beta}^{a} $ with $ \beta = (\beta_{1}, \dotso, \beta_{n}) $, we get
\begin{align*}
    \sup_{x \in \R^{n}} \lvert h_{\beta}^{a}(x) \rvert & \leqslant C_{0}^{n}(2a)^{\frac{n}{4}} \prod_{j = 1}^{n} (\beta_{j} + 1)^{-\frac{1}{12}} = C^{n}(2a)^{\frac{n}{4}}  \left( \left( \prod_{j = 1}^{n} (\beta_{j} + 1) \right)^{\frac{1}{n}} \right)^{-\frac{n}{12}} \\
    & \leqslant C_{0}^{n}(2a)^{\frac{n}{4}} \left( \frac{1}{n} \sum_{j = 1}^{n}(\beta_{j} + 1) \right)^{-\frac{n}{12}} = C^{n}(2a)^{\frac{n}{4}} \left( 1 + \frac{\lvert \beta \rvert}{n} \right)^{-\frac{n}{12}}.
\end{align*}
Therefore,
\begin{equation*}
    \mathbb{E}_{Z, X}\big[\sup_{x \in \R^{n}}\lvert f(x) - \hat{f}(x) \rvert \big] \leqslant C_{0}^{n}(2a)^{\frac{n}{4}} \left( 1 + \frac{B_{M}}{n} \right)^{-\frac{n}{12}} \cdot \sqrt{M} \sqrt{\frac{(M - 1)C^{2}}{L} +  \frac{M(C')^{2}}{L}},
\end{equation*}
which gives (\ref{REV2:eqn4}). Now we can choose $ a $ small enough to compensate for any potential growth depending on $ n $, which follows that
\begin{equation*}     \mathbb{E}_{\rho_{a}}\big[\sup_{x \in \R^{n}}\lvert f(x) - \hat{f}(x) \rvert \big]  = O\left( L^{-\frac{1}{2}} \right),
\end{equation*}
which is exactly the dimension-free error bound in (\ref{REV2:eqn3}).
\end{proof}
Analogous to Remark \ref{REV:re1}, we achieve the canonical Monte-Carlo rate for the pointwise error estimate between $ f $ and $ \hat{f} $, which is again independent of the dimension. The upper bound in (\ref{REV2:eqn4}) may not be sharp once we restrict $ x \in K $ for some compact subset $ K $. (\ref{REV2:eqn4}) just holds uniformly on $ \R^{n} $.

{\it{Assumption B}} is essential. We give an error bound between $ f $ and $ \hat{f} $ below when we drop the {\it{Assumption A}}, i.e. $ f = \sum_{\beta} c_{\beta} h_{\beta}^{a} $ as an infinite series. Merely with {\it{Assumption B}} and $ f \in \mathcal{H}_{a}^{s}(\R^{n}; \R) $ when $ s < \frac{n}{2} $, we usually have no $ L^{2} $-control between $ f $ and $ \hat{f} $ whose coefficients are estimated from data in original space only. 

Recall the RKHS kernel $ K_{a}(z, w) = e^{a z \cdot \bar{w}} $. Denote the normalized RKHS kernel with complex variable $ z $ and complex parameter $ w $ by
\begin{equation*}
    k_{a}(z, w) = e^{-\frac{a}{2} \lvert w \rvert^{2}}K_{a}(z, w).
\end{equation*}
By the Seip's sampling theorem for Bargmann-Fock space \cite{Seip}, there exists a deterministic complex sampling set $\Lambda = \{ z_{l} \}_{l = 1}^{\infty} $ such that $ \{ k_{a}(z, z_{l}) \}_{l = 1}^{\infty} $ is a frame for $ \mathcal{F}_{a}^{2}(\C^{n}) $ with respect to two positive constants $ A, B $. Define the frame operator
\begin{equation*}
  SF(z) = \sum_{l} \langle F, k_{a}(\cdot, z_{l}) \rangle_{\mathcal{F}_{a}^{2}} k_{a}(z, z_{l})
\end{equation*}
By the Seip's sampling theorem, 
\begin{equation*}
AI \leqslant S \leqslant BI.
\end{equation*}
Hence $ S^{-1} $ is well-defined. We define the normalized dual RKHS kernel
\begin{equation*}
    \tilde{k}_{a}(z, z_{l}) = S^{-1} k_{a}(z, z_{l}) \Rightarrow \tilde{K}_{a}(z, z_{l}) = e^{\frac{a}{2} \lvert z_{l} \rvert^{2}} \tilde{k}_{a}(z, z_{l}).
\end{equation*}
We conclude that
\begin{equation}\label{REV2:eqn5}
F(z) = \sum_{l} \langle F, k_{a}(\cdot, z_{l})\rangle_{\mathcal{F}_{a}^{2}}\tilde{k}_{a}(z, z_{l}) = \sum_{l} \langle F, K_{a}(\cdot, z_{l})\rangle_{\mathcal{F}_{a}^{2}}\tilde{K}_{a}(z, z_{l}) = \sum_{l} F(z_{l}) \tilde{K}_{a}(z, z_{l}).
\end{equation}
Our next result shows that with the Segal-Bargmann transform and the special Seip-type complex grids, we can still get algebraic convergence rate even when $ s < \frac{d}{2} $.
\begin{theorem}\label{REV2:thm3}
Let $ f \in L^{2}(\R^{n}; \R) \cap C^{\infty}(\R^{n}; \R) \cap \mathcal{H}_{s}^{a} $ such that $ \lVert f \rVert_{\mathcal{H}_{a}^{s}(\R^{n};\R)} \leqslant C_{1} $. If only {\it{Assumption B}} holds for some complex random sampling $ \{ V_{l} \}_{l =1}^{L} \overset{i.i.d.}{\sim} \rho_{V} $ with $ \mu_{a} \ll \rho_{V} $, there exists a data-driven approximation $ \hat{f} = \sum_{l = 1}^{L} \hat{d}_{l} \mathcal{B}_{a}^{-1}(\tilde{K}_{a}(\cdot, z_{l}))(x) $ such that
\begin{equation}\label{REV:eqn5}
\mathbb{E}_{V, X}\big[\lVert f - \hat{f} \rVert_{L^{2}(\R^{n}; \R)} \big] \leqslant D_{a, s, \Lambda} L^{-\frac{s}{2n}} \lVert f \rVert_{\mathcal{H}_{a}^{s}(\R^{n};\R)} + D_{a, s, \Lambda}' L^{-\frac{1}{2}} \lVert f \rVert_{L^{2}(\R^{n};\R)},
\end{equation}
where $ \hat{\mathbf{d}} = (\hat{d}_{1}, \dotso, \hat{d}_{L}) $ is determined by random sampling of $ \{ V_{l} \}_{l =1}^{L} \overset{i.i.d.}{\sim} \rho_{V} $ and $ \{ f(X_{i}) \}_{i = 1}^{N} $. In particular, we get the following algebraic error bound 
\begin{equation}\label{REV:eqn6}
\mathbb{E}_{V, X}\big[\lVert f - \hat{f} \rVert_{L^{2}(\R^{n}; \R)} \big] =   O\left( L^{-\frac{s}{2n}} \right).
\end{equation}
\end{theorem}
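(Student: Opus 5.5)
The plan is to split the error into a truncation part and a sampling part, using the unitarity of $\mathcal{B}_{a}$ so that every $L^{2}$ estimate can be carried out in $\mathcal{F}_{a}^{2}(\C^{n})$. Write $F=\mathcal{B}_{a}(f)$ and choose a truncation level $M_{0}$, to be coupled to $L$ later, with $P_{M_{0}}F=\sum_{|\beta|\le M_{0}}c_{\beta}e_{\beta}^{a}$. By (\ref{REV2:eqn5}), any $G\in\mathcal{F}_{a}^{2}$ is reconstructed exactly from its values on the Seip set as $G=\sum_{l}G(z_{l})\tilde{K}_{a}(\cdot,z_{l})$. Keeping only the first $L$ nodes, the natural estimator is $\hat{F}=\sum_{l=1}^{L}\hat{d}_{l}\tilde{K}_{a}(\cdot,z_{l})$ with $\hat{d}_{l}=\tilde{F}_{N}(z_{l})$. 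Here the values of $F$ at the nodes are recovered by the information-recovery construction (\ref{REV2:eqn00}), and the random probes $V_{l}$ enter through Assumption B. We set $\hat{f}=\mathcal{B}_{a}^{-1}\hat{F}$. The triangle inequality then gives
\begin{equation*}
\lVert f-\hat{f}\rVert_{L^{2}}\le \lVert F-P_{M_{0}}F\rVert_{\mathcal{F}_{a}^{2}}+\Big\lVert P_{M_{0}}F-\sum_{l\le L}P_{M_{0}}F(z_{l})\tilde{K}_{a}(\cdot,z_{l})\Big\rVert+\Big\lVert\sum_{l\le L}\big(P_{M_{0}}F(z_{l})-\hat{d}_{l}\big)\tilde{K}_{a}(\cdot,z_{l})\Big\rVert .
\end{equation*}

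The first term is controlled by (\ref{CON:eqnl3}) of Lemma \ref{CON:lemma0}, which gives the bound $(1+2a(2M_{0}+n))^{-s/2}\lVert f\rVert_{\mathcal{H}_{a}^{s}}$. For the third term, the dual frame $\{\tilde{k}_{a}(\cdot,z_{l})\}$ has upper frame bound $A^{-1}$, so the synthesis operator has norm at most $A^{-1/2}$. Hence this term is at most $A^{-1/2}\big(\sum_{l}e^{-a|z_{l}|^{2}}|P_{M_{0}}F(z_{l})-\hat{d}_{l}|^{2}\big)^{1/2}$. We write $P_{M_{0}}F(z_{l})-\hat{d}_{l}=\eta_{l}-(F-P_{M_{0}}F)(z_{l})$. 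The $\eta_{l}$ are mean zero and uncorrelated with variance at most $(C')^{2}$ by (\ref{REV2:eqn0}). Because $\mu_{a}\ll\rho_{V}$, reweighting by $d\mu_{a}/d\rho_{V}$ turns the conditional expectation into a Monte-Carlo average, exactly as in the second-variance computation of Theorem \ref{CON:thml1}. That computation gives a contribution of order $L^{-1/2}$ times a constant, and using $C'\lesssim \lVert f\rVert_{L^{2}}$ after normalisation we absorb the constant into $D'_{a,s,\Lambda}$. The tail piece $(F-P_{M_{0}}F)(z_{l})$ is handled by the analysis-operator bound $\sum_{l}e^{-a|z_{l}|^{2}}|G(z_{l})|^{2}\le B\lVert G\rVert^{2}$ with $G=F-P_{M_{0}}F$, which reduces it again to the truncation estimate.

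The main obstacle is the second term, the error from truncating the Seip reconstruction to $L$ nodes. We will not estimate the frame tail for a general $F$. Instead we apply it to the finite expansion $P_{M_{0}}F$, a polynomial of degree at most $M_{0}$. The dual kernels $\tilde{K}_{a}(\cdot,z_{l})$ decay off-diagonal, and $|P_{M_{0}}F(z)|^{2}e^{-a|z|^{2}}$ is concentrated in the ball $|z|\lesssim\sqrt{M_{0}/a}$. So if the first $L$ nodes fill this ball, the discarded nodes contribute a Gaussian-small amount. A separated Seip set has about $R^{2n}$ nodes in $B(0,R)$, which forces $L\sim (M_{0}/a)^{n}$, that is $M_{0}\sim aL^{1/n}$. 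With this coupling the first term becomes of order $(aL^{1/n})^{-s/2}\lVert f\rVert_{\mathcal{H}_{a}^{s}}\lesssim L^{-s/(2n)}\lVert f\rVert_{\mathcal{H}_{a}^{s}}$, and all $a$-, $s$- and $\Lambda$-dependent constants, including $A$, $B$ and the Gaussian tail, go into $D_{a,s,\Lambda}$. Summing the three bounds yields (\ref{REV:eqn5}). Since $s/(2n)<1/2$ in the regime $s<n/2$ of interest, the $L^{-s/(2n)}$ term dominates and (\ref{REV:eqn6}) follows. If the localization of the dual kernel turns out to be too delicate, the fallback is to enlarge the $L^{-s/(2n)}$ constant by using only the frame bounds together with a crude Gaussian tail estimate for polynomials of degree $M_{0}$ outside $B(0,C\sqrt{M_{0}/a})$.
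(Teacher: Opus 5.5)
\textbf{The gap is in your third term, i.e.\ in the choice $\hat d_l=\tilde F_N(z_l)$.} You then treat $F(z_l)-\tilde F_N(z_l)$ at the deterministic Seip nodes as the $\eta_l$ of Assumption B. But the theorem only grants (\ref{REV2:eqn0}) for the errors $F(V_l)-\tilde F_N(V_l)$ at the random probes $V_l\sim\rho_V$. Nothing controls the recovery at a fixed lattice point, and the nodes are not draws from any $\rho_V$ with $\mu_a\ll\rho_V$. Your estimator never uses the $V_l$ at all, so the sentence about reweighting by $d\mu_a/d\rho_V$ has nothing to act on.

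Even if you granted (\ref{REV2:eqn0}) at the nodes, your own synthesis bound gives an expected squared noise contribution of $A^{-1}(C')^2\sum_{l\le L}e^{-a|z_l|^2}$. This does not decay as $L\to\infty$: with a single noisy value per node nothing is averaged. So your argument cannot produce the $D'_{a,s,\Lambda}L^{-1/2}$ term in (\ref{REV:eqn5}).

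The missing idea, which is how the paper builds $\hat{\mathbf d}$, is to move information from the random probes to the nodes through the reproducing property
$F(z_l)=\langle F,K_a(\cdot,z_l)\rangle_{\mathcal{F}_a^2}=\int F(w)\overline{K_a(w,z_l)}\,d\mu_a(w)$.
You then estimate this integral by importance sampling:
\[
\hat d_l=\frac1L\sum_{j=1}^{L}\tilde F_N(V_j)\,q_V(V_j)\,\overline{K_a(V_j,z_l)},\qquad q_V=\frac{d\mu_a}{d\rho_V}.
\]
This is exactly where $\mu_a\ll\rho_V$ is used. The error $\hat d_l-F(z_l)$ then splits into two pieces:
\begin{enumerate}
\item the Monte Carlo fluctuation of $F(V_j)q_V(V_j)\overline{K_a(V_j,z_l)}$;
\item the average $\frac1L\sum_j\eta_j q_V(V_j)\overline{K_a(V_j,z_l)}$.
\end{enumerate}
Both are mean zero with variance $O(1/L)$ by independence and (\ref{REV2:eqn0}). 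This requires the relevant second moments, e.g.\ $\int|F|^2q_V|K_a(\cdot,z_l)|^2\,d\mu_a$, to be finite, which the paper also takes for granted.

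With that replacement, your first two terms are a legitimate alternative to the paper's $E_1$. The paper truncates the Seip expansion of $F$ itself and relies on the weighted sampling inequality $\sum_l(1+|z_l|^2)^s|\langle F,k_a(\cdot,z_l)\rangle|^2\le C_{a,s,\Lambda}\lVert f\rVert_{\mathcal{H}_a^s}^2$. Your route trades that inequality for Lemma \ref{CON:lemma0} plus a frame-tail estimate for $P_{M_0}F$.

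That tail estimate needs no localization of the dual kernels. The synthesis bound already gives $A^{-1/2}\bigl(\sum_{l>L}e^{-a|z_l|^2}|P_{M_0}F(z_l)|^2\bigr)^{1/2}$. By Cauchy--Schwarz, $e^{-a|z|^2}|P_{M_0}F(z)|^2\le\lVert f\rVert_{L^2}^2\sum_{k\le M_0}\frac{(a|z|^2)^k}{k!}e^{-a|z|^2}$, a Poisson tail that is exponentially small once $a|z|^2\ge 2M_0$.
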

\begin{proof}
Let $ \tilde{F}_{N}(V_{l}) $ be given as in \ref{REV2:eqn00} such that
\begin{equation*}
    \eta_{l} = F(V_{l}) - \tilde{F}_{N}(V_{l})
\end{equation*}
satisfies (\ref{REV2:eqn0}). Set $ q_{V} = \frac{d\mu_{a}}{d\rho_{V}} $ and define 
\begin{equation}\label{REV2:eqn6}
\hat{d}_{l} = \frac{1}{L}\sum_{j = 1}^{L} \tilde{F}_{N}(V_{j}q_{V}(V_{j}) \overline{K_{a}(V_{j}, z_{l})}, l = 1, \dotso, L.
\end{equation}
Denote by
\begin{equation*}
    P_{L}F(z) = \sum_{l = 1}^{L} \langle F, K_{a}(\cdot, z_{l})\rangle_{\mathcal{F}_{a}^{2}}\tilde{K}_{a}(z, z_{l}),
\end{equation*}
we consider
\begin{equation*}
\mathbb{E}_{V, X} \left[ \lVert F - \hat{F} \rVert_{\mathcal{F}_{a}^{2}(\C^{n})}^{2} \right] \leqslant 2\mathbb{E}_{V, X} \left[ \lVert F - P_{L}F \rVert_{\mathcal{F}_{a}^{2}(\C^{n})}^{2} \right] + 2\mathbb{E}_{V, X} \left[ \lVert P_{L}F - \hat{F} \rVert_{\mathcal{F}_{a}^{2}(\C^{n})}^{2} \right] : = 2E_{1} + 2E_{2}.
\end{equation*}
Since $ f \in \mathcal{H}_{s}^{a}(\R^{n}; \R) $, it follows from the normalized expansion of $F $ in (\ref{REV2:eqn5}) that
\begin{equation*}
\sum_{l} (1 + \lVert z_{l} \rVert^{2})^{s} \lvert \langle F, k_{a}(\cdot, z_{l}) \rangle_{\mathcal{F}_{a}^{2}} \rvert^{2} \leqslant C_{a, s, \Lambda} \lVert f \rVert_{\mathcal{H}_{a}^{s}(\R^{n};\R)}^{2}.
\end{equation*}
In the Seip sampling, we have $ \lVert z_{l} \rVert \leqslant R_{l} $, and we may assume that $ R_{l} \leqslant R_{l + 1}, \forall l $. It follows that there exist constants $ D_{1}, D_{2} $ for a regular sampling lattice, independent of $ L, R_{L} $ such that
\begin{equation*}
    D_{1} R_{L}^{2n} \leqslant L \leqslant D_{2} R_{L}^{2n}.
\end{equation*}
We observe that
\begin{align*}
E_{1} & \leqslant \frac{1}{A}\mathbb{E}_{V, X} \left[ \sum_{l > L} \lvert \langle F, k_{a}(\cdot, z_{l}) \rangle_{\mathcal{F}_{a}^{2}} \rvert^{2}  \right] \leqslant \frac{1}{A} ( 1 + R_{L}^{2})^{-s} \sum_{l}  (1 + \lVert z_{l} \rVert^{2})^{s} \lvert \langle F, k_{a}(\cdot, z_{l}) \rangle_{\mathcal{F}_{a}^{2}} \rvert^{2} \\
& \leqslant \frac{C_{a, s, \Lambda}}{A} ( 1 + R_{L}^{2})^{-s} \lVert f \rVert_{\mathcal{H}_{a}^{s}(\R^{n};\R)}^{2} \leqslant \frac{C_{a, s, \Lambda}}{A} (1 + D_{2}^{-\frac{1}{n}} L^{\frac{1}{n}})^{-s} \lVert f \rVert_{\mathcal{H}_{a}^{s}(\R^{n};\R)}^{2} \\
& : = D_{a, s, \Lambda}^{2} L^{-\frac{s}{n}} \lVert f \rVert_{\mathcal{H}_{a}^{s}(\R^{n};\R)}^{2}.
\end{align*}
For $ E_{2} $, we compute that
\begin{align*}
    E_{2} & \leqslant \mathbb{E}_{V, X}\left[\left\lVert \sum_{l = 1}^{L} \left( F(z_{l}) - \hat{d}_{l} \right) \tilde{K}_{a}(\cdot, z_{l}) \right\rVert_{\mathcal{F}_{a}^{2}(\C^{n})}^{2} \right] \\
    & \leqslant \mathbb{E}_{V, X}\left[\left\lVert \sum_{l = 1}^{L} \left( F(z_{l}) - \frac{1}{L} \sum_{j = 1}^{L} F(V_{j})q_{V}(V_{j}) \overline{K_{a}(V_{j}, z_{l})}  \right) \tilde{K}_{a}(\cdot, z_{l}) \right\rVert_{\mathcal{F}_{a}^{2}(\C^{n})}^{2} \right] \\
    & \qquad + \mathbb{E}_{V, X}\left[\left\lVert \sum_{l = 1}^{L} \left( \frac{1}{L} \sum_{j = 1}^{L} F(V_{j})q_{V}(V_{j}) \overline{K_{a}(V_{j}, z_{l})} - \hat{d}_{l} \right) \tilde{K}_{a}(\cdot, z_{l}) \right\rVert_{\mathcal{F}_{a}^{2}(\C^{n})}^{2} \right] \\
    & = \mathbb{E}_{V}\left[\mathbb{E}_{X|V} \left[ \left\lVert  \sum_{l = 1}^{L} \mathbb{E}_{X|V} \left( F(z_{l}) - \frac{1}{L} \sum_{j = 1}^{L} F(V_{j})q_{V}(V_{j}) \overline{K_{a}(V_{j}, z_{l})} \right) \tilde{K}_{a}(\cdot, z_{l}) \right\rVert_{\mathcal{F}_{a}^{2}(\C^{n})}^{2} \right] \right] \\
    & \qquad + \mathbb{E}_{V, X}\left[\mathbb{E}_{X|V} \left[\left\lVert \sum_{l = 1}^{L}  \left( \frac{1}{L} \sum_{j = 1}^{L} \eta_{j}q_{V}(V_{j}) \overline{K_{a}(V_{j}, z_{l})} \right) \tilde{K}_{a}(\cdot, z_{l}) \right\rVert_{\mathcal{F}_{a}^{2}(\C^{n})}^{2} \right] \right] 
\end{align*}
By the same argument of Monte-Carlo sampling with respect to $ \{ V_{l} \}_{l = 1}^{L} \overset{i.i.d.}{\sim} \rho_{V} $ as in Theorem \ref{CON:thml1}, we conclude from (\ref{REV2:eqn0}) that
\begin{equation*}
E_{2} \leqslant (D_{a, s, \Lambda}')^{2}L^{-1} \lVert f \rVert_{L^{2}(\R^{n}; \R)}^{2}
\end{equation*}
where $ D_{a, s, \Lambda}' $ depends on the upper bound of the Fock norms of both $ K_{a} $ and $ \tilde{K}_{a} $.

Note that by definition of $ \hat{f} $, we have
\begin{equation*}
    \mathcal{B}_{a}^{-1}(F - \hat{F}) = f - \hat{f}.
\end{equation*}
By the unitary property of the inverse Segal-Bargmann transform, we get (\ref{REV2:eqn5}) by adding estimates of $ E_{1} $ and $ E_{2} $ together, after absorbing fixed numerical factors into the constants.
\end{proof}
\medskip

\subsection{$ L^{\infty} $-Convergence of Solving Partial Differential Equations by Neural Operators}
Consider $ \mathcal{L}u  = f $ on $ \R^{n} $ where $ \mathcal{L} $ is a smooth differential operator. Our goal is to learn $ \mathcal{L}^{-1} : f \rightarrow u $. In this section, we show that we can apply complex neural operator $ \tilde{G}_{\tilde{\theta}} $ to approximate $ \mathcal{L}^{-1} $, and give the error bounds. The first version requires both {\it{Assumption A}} and {\it{Assumption B}}, while the second version requires merely {\it{Assumption B}}. In addition, we give an algorithm for the {\it{information recovery matrix}} $ W $.

The next lemma verifies the diagram (\ref{AL:eqn1a}).
\begin{lemma}\label{CON:lemma1}
Let $ \mathcal{L} $ be a differential operator with smooth coefficients. For $ u, f \in L^{2}(\R^{n}; \R) \cap \calC^{\infty}(\R^{n}; \R) $. Suppose that there exists a continuous linear functional $ \tilde{G}  $ such that $ \mathcal{B}_{a}u = \tilde{G}\mathcal{B}_{a} f $, then the following relation holds.
\begin{equation*}
\mathcal{L}^{-1}f = \mathcal{B}_{a}^{-1} \tilde{G} \mathcal{B}_{a}f
\end{equation*}
for every $ u $ such that $ \mathcal{L}u = f $.
\end{lemma}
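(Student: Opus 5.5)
The plan is to prove the identity by a direct chase around the diagram (\ref{AL:eqn1a}), using only that $\mathcal{B}_{a}$ is a unitary isomorphism $L^{2}(\R^{n};\R)\to\mathcal{F}_{a}^{2}(\C^{n})$ whose inverse is given in Definition \ref{SB:def3}. First, fix $f\in L^{2}(\R^{n};\R)\cap\calC^{\infty}(\R^{n};\R)$ and any $u\in L^{2}(\R^{n};\R)\cap\calC^{\infty}(\R^{n};\R)$ with $\mathcal{L}u=f$. Here $\mathcal{L}^{-1}f$ is read as ``such a solution $u$''; the statement ``for every $u$ with $\mathcal{L}u=f$'' is what gives this meaning, rather than any global invertibility of $\mathcal{L}$. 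Since both $u$ and $f$ lie in $L^{2}$, the transforms $\mathcal{B}_{a}u$ and $\mathcal{B}_{a}f$ are well-defined elements of $\mathcal{F}_{a}^{2}(\C^{n})$. I also read the hypothesis $\mathcal{B}_{a}u=\tilde{G}\mathcal{B}_{a}f$ as holding for each such solution pair.

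Second, apply $\mathcal{B}_{a}^{-1}$ to both sides of $\mathcal{B}_{a}u=\tilde{G}\mathcal{B}_{a}f$. By the identity $\mathcal{B}_{a}^{-1}\mathcal{B}_{a}=Id_{L^{2}(\R^{n};\R)}$ from Definition \ref{SB:def3}, the left side becomes $u$. The right side becomes $\mathcal{B}_{a}^{-1}\tilde{G}\mathcal{B}_{a}f$, which is well defined because $\tilde{G}$ maps into $\mathcal{F}_{a}^{2}$; continuity of $\tilde{G}$ is only needed to make sense of $\tilde G$ as a bounded map on the Fock space. This gives $u=\mathcal{B}_{a}^{-1}\tilde{G}\mathcal{B}_{a}f$ as an equality in $L^{2}$, that is, $\mathcal{L}^{-1}f=\mathcal{B}_{a}^{-1}\tilde G\mathcal{B}_{a}f$.

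Third, upgrade the $L^{2}$ equality to a pointwise one. Both $u$ and the right-hand side are images under $\mathcal{B}_{a}^{-1}$ of the same Fock element. Since $u$ is smooth, the equality holds everywhere after choosing the continuous representative, and the integral formula in Definition \ref{SB:def3} defines $\mathcal{B}_{a}^{-1}F$ pointwise.

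I expect the main obstacle to be interpretive rather than computational. If $\mathcal{L}$ has a kernel, distinct solutions $u_{1},u_{2}$ of $\mathcal{L}u=f$ cannot both equal the single element $\mathcal{B}_{a}^{-1}\tilde G\mathcal{B}_{a}f$. I would handle this by noting that the hypothesis, applied to both solutions, gives $\mathcal{B}_{a}u_{1}=\mathcal{B}_{a}u_{2}$. Injectivity of the unitary $\mathcal{B}_{a}$ then forces $u_{1}=u_{2}$, so the hypothesis itself ensures uniqueness in $L^{2}$ and the conclusion is consistent.
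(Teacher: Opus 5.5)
Your argument is correct and is essentially the paper's one-line proof: apply $\mathcal{B}_{a}^{-1}$ to $\mathcal{B}_{a}u=\tilde{G}\mathcal{B}_{a}f$ and use $\mathcal{B}_{a}^{-1}\mathcal{B}_{a}=Id_{L^{2}(\R^{n};\R)}$ (it is this invertibility, rather than the linearity the paper cites, that does the work), and your uniqueness remark correctly handles the ``for every $u$'' phrasing. Your Step~3 is unnecessary, since the lemma asserts an identity in $L^{2}$ that Step~2 already gives, and its justification is shaky in any case: the integral formula for $\mathcal{B}_{a}^{-1}$ pairs $F$ against $A_{a}(\cdot,x)$, which is not in $\mathcal{F}_{a}^{2}(\C^{n})$ (one has $\lvert A_{a}(z,x)\rvert^{2}e^{-a\lvert z\rvert^{2}}\propto e^{-2a\lvert \mathrm{Re}\, z-x\rvert^{2}}$), so that formula need not converge absolutely for general $F\in\mathcal{F}_{a}^{2}(\C^{n})$ and does not by itself define $\mathcal{B}_{a}^{-1}F$ pointwise.
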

\begin{proof}
The proof is immediate by the linearity of $ \mathcal{B}_{a} $ and $ \mathcal{B}_{a}^{-1} $.
\end{proof}

Let's discuss the simple case where Lemma \ref{CON:lemma1} holds. We also need the universal approximation theorem for neural operators on Hilbert space:
\begin{proposition}\label{REV2:prop2}\cite{CHEN}
Let $ \tilde{G} : \mathcal{F}_{a}^{2}(\C^{n}) \rightarrow \mathcal{F}_{a}^{2}(\C^{n}) $ be a solution operator of the following equation
\begin{equation*}
    U = \tilde{G} F
\end{equation*}
on the Hilbert space $ \mathcal{F}_{a}^{2}(\C^{n}) $, which satisfies
\begin{equation}\label{CON:eqn6}
\lVert \tilde{G}(F_{1}) - \tilde{G}(F_{2}) \rVert_{\mathcal{F}_{a}^{2}(\C^{n})} \leqslant C_{\tilde{G}, \mathcal{K_{1}}} \lVert F_{1} - F_{2} \rVert_{\mathcal{F}_{a}^{2}(\C^{n})}, \forall F_{1}, F_{2} \in \mathcal{K_{1}} \subset \mathcal{F}_{a}^{2}(\C^{n})
\end{equation}
for some compact admissible set $ \mathcal{K}_{1} $. Then for any $ \epsilon_{NO} > 0 $, there exists a neural operator $ \tilde{\mathcal{G}}_{\tilde{\theta}} $  parametrized by $ \tilde{\theta} $ such that
\begin{equation*}
\sup_{F \in \mathcal{K}_{1}} \left\lVert \tilde{G}(F)(z) - \tilde{\mathcal{G}}_{\tilde{\theta}}(F)(z) \right\rVert_{\mathcal{F}_{a}^{2}(\C^{n})} < \epsilon_{NO}.
\end{equation*}
\end{proposition}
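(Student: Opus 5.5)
This statement is the universal approximation theorem for neural operators on Hilbert spaces, quoted from \cite{CHEN}. My plan is to reduce it to finite dimensions through the orthonormal basis $\{e_{\beta}^{a}\}$ of $\mathcal{F}_{a}^{2}(\C^{n})$ from Definition \ref{Def4}, and then apply the classical finite-dimensional universal approximation theorem. Fix $\epsilon_{NO}>0$. Let $P_{M}$ denote the orthogonal projection onto $\text{Span}\{e_{\beta}^{a}:\beta\in\Lambda_{M}\}$.

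\textbf{Step 1: uniform truncation on $\mathcal{K}_{1}$.} Since $\mathcal{K}_{1}$ is compact, the tails satisfy $\sup_{F\in\mathcal{K}_{1}}\lVert F-P_{M}F\rVert\to 0$ as $M\to\infty$. This follows from pointwise convergence plus equicontinuity, since all the $P_{M}$ have norm $1$; a Dini or finite-$\delta$-net argument gives the uniformity. The image $\tilde{G}(\mathcal{K}_{1})$ is also compact, because $\tilde{G}$ is Lipschitz on $\mathcal{K}_{1}$ by (\ref{CON:eqn6}). The same argument then gives $\sup_{F}\lVert \tilde{G}F-P_{M}\tilde{G}F\rVert<\epsilon_{NO}/3$ for $M$ large.

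\textbf{Step 2: encoder and decoder.} Encode $F\mapsto \mathbf{c}(F)=(\langle F,e_{\beta}^{a}\rangle)_{\beta\in\Lambda_{M}}\in\C^{M}$, and decode $\mathbf{d}\mapsto\sum_{\beta\in\Lambda_{M}}d_{\beta}e_{\beta}^{a}$, which is an isometry. This matches the latent representation of \S4.3. Define the finite-dimensional map $\Phi(\mathbf{c})=\mathbf{c}(\tilde{G}(\sum c_{\beta}e_{\beta}^{a}))$ on the compact set $\mathbf{c}(P_{M}\mathcal{K}_{1})$.

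\textbf{Step 3: approximating $\tilde{G}$ on the truncated inputs.} The subtlety is that $P_{M}F$ need not lie in $\mathcal{K}_{1}$, so (\ref{CON:eqn6}) does not apply directly there. I would handle this in one of two ways. The first is to assume, as is standard, that $\mathcal{K}_{1}$ is admissible in the sense of being stable under $P_{M}$. The second is to approximate the continuous map $\mathbf{c}(F)\mapsto \mathbf{c}(\tilde{G}F)$ directly. This map is well defined because, for $M$ large, $\mathbf{c}$ is injective up to $\epsilon$ on $\mathcal{K}_{1}$. Concretely, I extend $\Phi$ from the compact set $\mathbf{c}(\mathcal{K}_{1})$ to all of $\R^{2M}$ by Tietze's theorem, and write $\tilde{G}F=\tilde{G}P_{M}F+O(C_{\tilde{G},\mathcal{K}_{1}}\lVert F-P_{M}F\rVert)$. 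By Step 1 this error is at most $\epsilon_{NO}/3$.

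\textbf{Step 4: finite-dimensional approximation and assembly.} By the classical universal approximation theorem, applied to real and imaginary parts on a compact subset of $\R^{2M}$, there is an MLP $\mathcal{N}_{\tilde{\theta}}$ with $\sup\lVert\Phi-\mathcal{N}_{\tilde{\theta}}\rVert_{\C^{M}}<\epsilon_{NO}/3$. Set $\tilde{\mathcal{G}}_{\tilde{\theta}}=\text{Dec}\circ\mathcal{N}_{\tilde{\theta}}\circ\text{Enc}$. The triangle inequality, together with Parseval for the isometric decoder, combines the three $\epsilon_{NO}/3$ errors.

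I expect the main obstacle to be Step 3, namely controlling $\tilde{G}$ off $\mathcal{K}_{1}$. If neither fix works cleanly, I would fall back on the neural operator results of \cite{CHEN}, which handle exactly this point.
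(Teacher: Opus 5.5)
The paper gives no proof of this proposition; it simply imports the result from \cite{CHEN}. Your Steps 1, 2 and 4 are therefore already self-contained additions, and they are sound once you take the index sets nested and exhausting, e.g.\ $\Lambda_{M}=\{\beta:\lvert\beta\rvert\leqslant M\}$. That is what makes $P_{M}\to I$ strongly and your Dini argument valid.

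The genuine gap is Step 3, and neither of your fixes closes it. The first fix, $P_{M}\mathcal{K}_{1}\subset\mathcal{K}_{1}$, is an extra hypothesis that the statement does not contain. The second fix is circular. The assignment $\mathbf{c}(F)\mapsto\mathbf{c}(\tilde{G}F)$ is not a function on $\mathbf{c}(\mathcal{K}_{1})$, because distinct elements of $\mathcal{K}_{1}$ may share their first $M$ coefficients. Being ``injective up to $\epsilon$'' does not repair this, so there is nothing continuous to extend by Tietze. Your original $\Phi(\mathbf{c})=\mathbf{c}\bigl(\tilde{G}(\sum_{\beta}c_{\beta}e_{\beta}^{a})\bigr)$ is a genuine function, but proving it continuous requires continuity of $\tilde{G}$ on $P_{M}\mathcal{K}_{1}$. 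Likewise, the estimate $\tilde{G}F=\tilde{G}P_{M}F+O(C_{\tilde{G},\mathcal{K}_{1}}\lVert F-P_{M}F\rVert)$ invokes (\ref{CON:eqn6}) at the point $P_{M}F$, which need not lie in $\mathcal{K}_{1}$. That is exactly the step you had flagged as illegitimate.

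The missing idea is to never evaluate $\tilde{G}$ off $\mathcal{K}_{1}$, and instead build a continuous surrogate from a finite net.
\begin{itemize}
\item Let $\tau_{M}=\sup_{F\in\mathcal{K}_{1}}\lVert F-P_{M}F\rVert$, fix $\delta>0$, and choose a $\delta$-net $F_{1},\dots,F_{J}\in\mathcal{K}_{1}$.
\item Put $\mathbf{c}_{j}=\mathbf{c}(F_{j})$ and $\psi_{j}(\mathbf{c})=\max\{0,2\delta-\lvert\mathbf{c}-\mathbf{c}_{j}\rvert\}$. Since $\lvert\mathbf{c}(F)-\mathbf{c}_{j}\rvert\leqslant\lVert F-F_{j}\rVert$, every $\mathbf{c}(F)$ lies within $\delta$ of some $\mathbf{c}_{j}$, so $\sum_{i}\psi_{i}>\delta$ on $\mathbf{c}(\mathcal{K}_{1})$.
\item Hence $\phi_{j}=\psi_{j}/\sum_{i}\psi_{i}$ is a continuous partition of unity there. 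Define $\Psi(\mathbf{c})=\sum_{j}\phi_{j}(\mathbf{c})\,\mathbf{c}(\tilde{G}F_{j})$.
\item If $\phi_{j}(\mathbf{c}(F))>0$, Parseval gives $\lVert F-F_{j}\rVert\leqslant\lvert\mathbf{c}(F)-\mathbf{c}_{j}\rvert+2\tau_{M}<2\delta+2\tau_{M}$. Both $F$ and $F_{j}$ lie in $\mathcal{K}_{1}$, so (\ref{CON:eqn6}) and convexity give $\lvert\Psi(\mathbf{c}(F))-\mathbf{c}(\tilde{G}F)\rvert\leqslant 2C_{\tilde{G},\mathcal{K}_{1}}(\delta+\tau_{M})$ for all $F\in\mathcal{K}_{1}$.
\item Apply the finite-dimensional universal approximation theorem to $\Psi$ on the compact set $\mathbf{c}(\mathcal{K}_{1})$, and combine with your output truncation from Step 1. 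Choosing $M$ first, then $\delta$, then the network, yields the $\epsilon_{NO}$ bound.
\end{itemize}
This argument uses only uniform continuity of $\tilde{G}$ on $\mathcal{K}_{1}$, not the Lipschitz constant.

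Alternatively, if $\tilde{G}$ were known to be continuous on all of $\mathcal{F}_{a}^{2}(\C^{n})$, your Step 3 would go through after replacing $\mathcal{K}_{1}$ by the compact set $\mathcal{K}_{1}\cup\bigcup_{M}P_{M}\mathcal{K}_{1}$. That is again more than the statement assumes.
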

We are now ready to give our first result in neural operator estimate of the operator $ \mathcal{L}^{-1} $. 
\begin{theorem}\label{REV2:thm2}
Let $ s \geqslant s_{0} > \frac{n}{2} + 2 $ and $ u, f \in \mathcal{H}_{a}^{s}(\R^{n}; \R) \cap \calC^{\infty}(\R^{n}; \R) $ satisfying $ \mathcal{L} u = f $, where $ \mathcal{L} $ is a smooth differential operator whose inverse $ \mathcal{L}^{-1} $ is also a smooth operator. Suppose that we only have finitely many observations $ \{ f(x_{i}) \}_{i = 1}^{N} $ of the unknown ground truth $ f $. If {\it{Assumption A}} holds for $ f $, and {\it{Assumption B}} holds for some i.i.d. complex sampling $ \{ Z_{l} \}_{l = 1}^{L} \subset K \subset \C^{n} $, then for any $ \epsilon > 0 $ and a fixed compact admissible set $ \mathcal{K} \subset L^{2}(\R^{n}; \R) $ that contains $ \overline{\text{Span}(V_{M})} $, there exists a neural operator $ \tilde{G}_{NO} $ parametrized by $ \tilde{\theta} $ such that
\begin{equation}\label{REV2:eqn7}
\begin{split}
    & \mathbb{E}_{Z, X}\left[ \sup_{x \in \R^{n}, f \in \mathcal{K}, \{ Z_{l} \} \subset K} \lvert \mathcal{L}^{-1}(f)(x) - \mathcal{B}_{a}^{-1} \circ \tilde{\mathcal{G}}_{NO} \circ \mathcal{B}_{a}(\hat{f})(x) \rvert \right] \\
    & \qquad \leqslant C_{0}^{n}(2a)^{\frac{n}{4}} \left( 1 + \frac{B_{M}}{n} \right)^{-\frac{n}{12}} \cdot \sqrt{M}\left( \epsilon_{NO} + C_{\tilde{G}, \mathcal{K}} \sqrt{\frac{(M - 1)\lVert f \rVert_{L^{2}(\R^{n}; \R)}^{2}}{L} +  \frac{M\lVert f \rVert_{L^{\infty}(\R^{n}; \R)}^{2}}{L}} \right)
\end{split}
\end{equation}
where $ \hat{f} = \sum_{\beta \in \Lambda_{M}} \hat{c}_{\beta}h_{\beta}^{a} $ is determined by $ \{ Z_{l} \} $ and $ \{ f(x_{i}) \} $ as in Theorem \ref{REV:thm2}. In particular, there exists an appropriate choice of $ a $ such that the following dimension-free error bound holds:
\begin{equation}\label{REV2:eqn8}
\mathbb{E}_{Z, X}\left[ \sup_{x \in \R^{n},f \in \mathcal{K}, \{ Z_{l} \} \subset K} \lvert L^{-1}(f) - \mathcal{B}_{a}^{-1} \circ \tilde{\mathcal{G}}_{NO} \circ \mathcal{B}_{a}(\hat{f}) \rvert\right] = O\left( L^{-\frac{1}{2}} \right).
\end{equation}
\end{theorem}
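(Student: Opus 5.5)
The plan is to decompose the error in the Bargmann-Fock space and then transport it back to $\R^{n}$ with the uniform Hermite bound used in Proposition \ref{REV2:prop1}. Write $U = \mathcal{B}_{a}(u)$, $F = \mathcal{B}_{a}(f)$ and $\hat{F} = \mathcal{B}_{a}(\hat{f}) = \sum_{\beta \in \Lambda_{M}} \hat{c}_{\beta} e_{\beta}^{a}$. By Lemma \ref{CON:lemma1}, $\mathcal{L}^{-1} f = \mathcal{B}_{a}^{-1} \tilde{G} F$, where $\tilde{G} = \mathcal{B}_{a} \mathcal{L}^{-1} \mathcal{B}_{a}^{-1}$. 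By unitarity, $\tilde{G}$ inherits the Lipschitz bound (\ref{CON:eqn6}) on the compact set $\mathcal{K}_{1} = \mathcal{B}_{a}(\mathcal{K})$, with constant $C_{\tilde{G}, \mathcal{K}}$. Proposition \ref{REV2:prop2} then supplies $\tilde{\mathcal{G}}_{NO}$ with $\sup_{F \in \mathcal{K}_{1}} \lVert \tilde{G} F - \tilde{\mathcal{G}}_{NO} F \rVert_{\mathcal{F}_{a}^{2}} < \epsilon_{NO}$. The error splits by the triangle inequality as
\begin{equation*}
\tilde{G}F - \tilde{\mathcal{G}}_{NO}\hat{F} = \left( \tilde{G}F - \tilde{G}\hat{F} \right) + \left( \tilde{G}\hat{F} - \tilde{\mathcal{G}}_{NO}\hat{F} \right).
\end{equation*}
Since $\hat{F} \in \text{Span}(e_{\beta}^{a} : \beta \in \Lambda_{M}) = \mathcal{B}_{a}(\text{Span}\, V_{M}) \subset \mathcal{K}_{1}$, both terms are controlled in $\mathcal{F}_{a}^{2}$-norm. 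The first is at most $C_{\tilde{G},\mathcal{K}} \lVert \mathbf{c} - \hat{\mathbf{c}} \rVert$, and the second is at most $\epsilon_{NO}$.

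Next I pass from the Fock norm to the sup norm on $\R^{n}$. The estimate needs the output $\tilde{G}F - \tilde{\mathcal{G}}_{NO}\hat{F}$ to lie in $\text{Span}\{e_{\beta}^{a}\}_{\beta \in \Lambda_{M}}$, or at least the estimate is applied to its projection there. The architecture of \S4.3 guarantees this, since it outputs coefficient vectors in $\R^{M}$ decoded through $\tilde{B}^{-1}$. Using $\mathcal{L}^{-1}$ smooth together with Assumption A, I take $u$ to lie in the same span, possibly after enlarging $\Lambda_{M}$. For any $G = \sum_{\beta \in \Lambda_{M}} g_{\beta} e_{\beta}^{a}$, the Cauchy--Schwarz argument of Proposition \ref{REV2:prop1} gives
\begin{equation*}
\sup_{x \in \R^{n}} \lvert \mathcal{B}_{a}^{-1} G(x) \rvert \leqslant C_{0}^{n}(2a)^{\frac{n}{4}}\left(1 + \frac{B_{M}}{n}\right)^{-\frac{n}{12}} \sqrt{M}\, \lVert G \rVert_{\mathcal{F}_{a}^{2}}.
\end{equation*}
Applying this to the two terms yields the prefactor in (\ref{REV2:eqn7}) multiplying $\epsilon_{NO} + C_{\tilde{G},\mathcal{K}} \lVert \mathbf{c} - \hat{\mathbf{c}} \rVert$.

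Then I take $\mathbb{E}_{Z,X}$. The $\epsilon_{NO}$ term is deterministic. For the other term, Jensen's inequality and the second-moment bound from the proof of Theorem \ref{CON:thml1} give $\mathbb{E}\lVert \mathbf{c} - \hat{\mathbf{c}} \rVert \leqslant \sqrt{(M-1)\lVert f \rVert_{L^{2}}^{2}/L + M \lVert f \rVert_{L^{\infty}}^{2}/L}$, which produces (\ref{REV2:eqn7}).

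For (\ref{REV2:eqn8}), I choose $\epsilon_{NO} \leqslant L^{-1/2}$, which is allowed because Proposition \ref{REV2:prop2} holds for arbitrary $\epsilon_{NO}$. I also choose $a$ small enough, as in Proposition \ref{REV2:prop1}, that $C_{0}^{n}(2a)^{n/4}$ absorbs the $n$-dependence. This gives $O(L^{-1/2})$.

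The main obstacle is the supremum over $f \in \mathcal{K}$ and $\{Z_{l}\} \subset K$ inside the expectation. Theorem \ref{CON:thml1} only controls $\mathbb{E}\lVert \mathbf{c} - \hat{\mathbf{c}} \rVert$ for a single $f$. My first attempt would be to use that, under Assumption A, $\mathbf{c} \mapsto \hat{\mathbf{c}} - \mathbf{c}$ is linear in $f$ plus the noise term. The supremum over the finite-dimensional bounded set $\mathcal{K} \cap \text{Span}\, V_{M}$ is then bounded by the operator norm of the random matrix $\frac{1}{L}\sum_{l}(q_{M}\overline{\mathbf{e}^{a}}\mathbf{e}^{a\,t} - I_{M})$, whose expected squared Frobenius norm is of order $M^{2}/L$. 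This keeps the rate $L^{-1/2}$ with $M$-dependent constants, where $\lVert f \rVert$ is read as $\sup_{\mathcal{K}}$. If this becomes too messy, I would instead interpret the supremum as taken after fixing $f$ and state the bound pointwise in $f$.
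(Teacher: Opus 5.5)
Your skeleton is the paper's. You split the error in $\mathcal{F}_a^2(\C^n)$ by the triangle inequality into a Lipschitz piece bounded by $C_{\tilde G,\mathcal K}\lVert\mathbf c-\hat{\mathbf c}\rVert$ and an $\epsilon_{NO}$ piece from Proposition \ref{REV2:prop2}. You then use the Hermite sup bound of Proposition \ref{REV2:prop1}, Jensen with the second-moment estimate of Theorem \ref{CON:thml1}, and small $a$. Your explicit choice $\epsilon_{NO}\le L^{-1/2}$ and your Frobenius-norm treatment of the supremum over $f$ are more careful than the paper, which simply drops that supremum in the Lipschitz term.

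\textbf{The gap.} It is the transfer to $\sup_x$. The inequality you display holds only on $\mathcal{B}_a(\text{Span}\,V_M)$. No bound $\sup_x\lvert\mathcal{B}_a^{-1}G(x)\rvert\le C\lVert G\rVert_{\mathcal{F}_a^2}$ can hold on all of $\mathcal{F}_a^2(\C^n)$, since $\mathcal{B}_a^{-1}$ is unitary onto $L^2(\R^n)$. You obtain membership by asserting, ``using $\mathcal{L}^{-1}$ smooth together with Assumption A'', that $u$ lies in a finite Hermite span. Implicitly you also need $\mathcal{L}^{-1}(f-\hat f)$ there, since you apply the bound to $\tilde GF-\tilde G\hat F$ separately. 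This does not follow: Assumption A concerns $f$ only, and smooth inverses do not preserve finite Hermite spans.

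Take $\mathcal{L}=-\Delta+1$ and $f=h_0^a$; all hypotheses on $f$, $u$, $\mathcal{L}$ hold. Finite combinations of the $h_\beta^a$ have Fourier transforms equal to a polynomial times a fixed Gaussian. But $\widehat u=\widehat{h_0^a}/(1+\lvert\xi\rvert^2)$ is not of that form, so $u$ lies in no finite span. Enlarging $\Lambda_M$ would in any case change $M$ and $B_M$ in the prefactor. Projecting does not help either. In $\tilde GF-P_M\tilde{\mathcal G}_{NO}\hat F=(I-P_M)\tilde GF+P_M(\tilde GF-\tilde{\mathcal G}_{NO}\hat F)$, the first piece $\mathcal{B}_a(u-P_Mu)$ has an $L$-independent sup norm. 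Lemma \ref{CON:lemma0} can bound it, but only as a non-decaying term, as in Theorem \ref{REV2:thm4}. The claim that the \S4.3 architecture makes the output span-valued also fails. The neural operator of Proposition \ref{REV2:prop2} is not span-valued, and neither is the real output of \S4.3, because of the local term $W_{S-1,k}v^{S-1,k}$.

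\textbf{The paper's proof.} It has the same hole. It applies Proposition \ref{REV2:prop1} directly to $\tilde G(\mathcal{B}_af)-\tilde{\mathcal G}_{NO}\circ\mathcal{B}_a(\hat f)$. For the $\epsilon_{NO}$ piece it cites $\int_{\C^n}\lvert A_a(\bar z,x)\rvert\,d\mu_a(z)$, which controls $\sup_x\lvert\mathcal{B}_a^{-1}G\rvert$ by $\sup_z\lvert G\rvert$, not by $\lVert G\rVert_{\mathcal{F}_a^2}$. So you have located the real weak point without closing it.

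\textbf{A repair that keeps the rate.} Use linearity of $\mathcal{L}^{-1}$, as in Lemma \ref{CON:lemma1}. Set $V=\mathcal{L}^{-1}(\text{Span}\,V_M)$, which is finite-dimensional, and post-compose the neural operator with the orthogonal projection $\Pi$ onto $\mathcal{B}_aV$. Since $\tilde G\hat F\in\mathcal{B}_aV$, $\Pi$ does not increase the $\epsilon_{NO}$ error. Both pieces then lie in $\mathcal{B}_aV$. There $\sup_x$ is at most a constant $C_V$ times the $L^2$ norm, provided each $\mathcal{L}^{-1}h_\beta^a$ is bounded. This gives the $O(L^{-1/2})$ rate of (\ref{REV2:eqn8}), with $C_V$ in place of the Hermite prefactor. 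The bound (\ref{REV2:eqn7}) as stated needs the additional hypothesis $\mathcal{L}^{-1}(\text{Span}\,V_M)\subseteq\text{Span}\,V_M$.

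\textbf{The borrowed prefactor.} The factor you take from Proposition \ref{REV2:prop1} is itself not valid. The AM--GM step there reverses, because the exponent $-n/12$ is negative. Passing to $B_M=\max_{\beta\in\Lambda_M}\lvert\beta\rvert$ also reverses, because $(1+t/n)^{-n/12}$ decreases in $t$. What holds is $\sup_x\lvert h_\beta^a\rvert\le C_0^n(2a)^{n/4}$, so the factor $(1+B_M/n)^{-n/12}$ should be dropped; this is harmless for the rate.
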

\begin{proof}
Fix some $ \epsilon > 0 $. Since $ \mathcal{B}_{a} $ is a unitary transformation, $ \mathcal{K}' : =\mathcal{B}_{a} (\mathcal{K}) $ is a compact admissible set of $ \mathcal{F}_{a}^{2}(\C^{n}) $. By our hypotheses, $ \mathcal{B}_{a}(\hat{f}), \mathcal{B}_{a}(f) \in \mathcal{K}' $. Fix a positive constant $ \epsilon_{NO} < \epsilon $ that will be determined later. By Lemma \ref{CON:lemma1} and Proposition \ref{REV2:prop2}, there exists a neural operator $ \tilde{\mathcal{G}}_{NO} $ such that
\begin{align*}
    & \mathbb{E}_{Z, X}\left[ \sup_{\mathcal{B}_{a}(f) \in \mathcal{K}', \{ Z_{l} \} \subset K}\left\lVert \tilde{G}(\mathcal{B}_{a}(f)) - \tilde{\mathcal{G}}_{NO} \circ \mathcal{B}_{a}(\hat{f}) \right\rVert_{\mathcal{F}_{a}^{2}(\C^{n})} \right] \\
    & \qquad \leqslant \mathbb{E}_{Z, X}\left[ \left\lVert \tilde{G}(\mathcal{B}_{a}(f)) - \tilde{G} \circ \mathcal{B}_{a}(\hat{f}) \right\rVert_{\mathcal{F}_{a}^{2}(\C^{n})} \right] \\
    & \qquad \qquad + \mathbb{E}_{Z, X}\left[ \sup_{\mathcal{B}_{a}(f) \in \mathcal{K}', \{ Z_{l} \} \subset K}\left\lVert \tilde{G} \circ \mathcal{B}_{a}(\hat{f}) - \tilde{\mathcal{G}}_{NO} \circ \mathcal{B}_{a}(\hat{f}) \right\rVert_{\mathcal{F}_{a}^{2}(\C^{n})} \right] \\
    & \qquad \leqslant C_{\tilde{G}, \mathcal{K}'}\mathbb{E}_{Z, X}\left[ \left\lVert \tilde{G}(\mathcal{B}_{a}(f)) - \tilde{G} \circ \mathcal{B}_{a}(\hat{f}) \right\rVert_{\mathcal{F}_{a}^{2}(\C^{n})} \right] + \epsilon_{NO} \\
    & \qquad \leqslant C_{\tilde{G}, \mathcal{K}'} \sqrt{\frac{(M - 1)C^{2}}{L} +  \frac{M(C')^{2}}{L}} + \epsilon_{NO}.
\end{align*}
Apply Lemma \ref{CON:lemma1} and Proposition \ref{REV2:prop1}, we conclude that
\begin{align*}
     & \mathbb{E}_{Z, X}\left[ \sup_{x \in \R^{n}, f \in \mathcal{K}, \{ Z_{l} \} \subset K} \lvert L^{-1}(f)(x) - \mathcal{B}_{a}^{-1} \circ \tilde{\mathcal{G}}_{NO} \circ \mathcal{B}_{a}(\hat{f})(x) \rvert \right] \\
     & \qquad =  \mathbb{E}_{Z, X}\left[ \sup_{x \in \R^{n}, f \in \mathcal{K}, \{ Z_{l} \} \subset K} \left\lvert \mathcal{B}_{a}^{-1} \left( \tilde{G}(\mathcal{B}_{a}(f)) - \tilde{\mathcal{G}}_{NO} \circ \mathcal{B}_{a}(\hat{f}) \right)(x) \right\rvert \right] \\
     & \qquad \leqslant C_{0}^{n}(2a)^{\frac{n}{4}} \left( 1 + \frac{B_{M}}{n} \right)^{-\frac{n}{12}} \cdot \sqrt{M} \left( C_{\tilde{G}, \mathcal{K}} \sqrt{\frac{(M - 1)C^{2}}{L} +  \frac{M(C')^{2}}{L}} + C_{1} (2a)^{\frac{n}{4}} \epsilon_{NO} \right) \\
     & \qquad \leqslant C_{0}^{n}(2a)^{\frac{n}{4}} \left( 1 + \frac{B_{M}}{n} \right)^{-\frac{n}{12}} \cdot \sqrt{M} \left( \epsilon + C_{\tilde{G}, \mathcal{K}} \sqrt{\frac{(M - 1)C^{2}}{L} +  \frac{M(C')^{2}}{L}} \right).
\end{align*}
Here the bound $ C_{1} (2a)^{\frac{n}{4}} $ comes from the integration $ \int_{\C^{n}} \lvert A_{a}(\bar{z}, x) \rvert d\mu_{a}(z) $, where $ C_{1} $ is independent of the dimension. We can shrink $ \epsilon_{NO} $ if necessary so that the last inequality above holds. This gives (\ref{REV2:eqn7}). With an appropriate choice of $ a $, the $ a^{\frac{n}{4}} $ term is small enough to compensate any potential dimensional growth in $ M $ and $ C_{0}^{n} $. Hence (\ref{REV2:eqn8}) follows.
\end{proof}
\begin{remark}\label{REV2:re3}
If we define
\begin{equation*}
    \mathcal{G}_{\theta} : = \mathcal{B}_{a}^{-1} \circ \tilde{\mathcal{G}}_{NO} \circ \mathcal{B}_{a},
\end{equation*}
we can apply Markov's inequality for fixed $ \delta \in (0, 1) $ to show that the following error bound holds in probability at least $ 1 - \delta $:
\begin{equation}\label{REV2:eqn9}
\sup_{x \in \R^{n}, f \in \mathcal{K}, \{ Z_{l} \} \subset K } \lvert u(x) - \mathcal{G}_{\theta}(\hat{f})(x) \rvert = O\left( L^{-\frac{1}{2}} \right).
\end{equation}
It means that with the help of complex grids $ \{ z_{l} \}_{l = 1}^{L} $, we can start at a finite set of observations $ \{ f(x_{i}) \}_{i = 1}^{N} $ to solve the PDE with $ O\left( L^{-\frac{1}{2}} \right) $ error, provided that {\it{Assumption A}} and {\it{Assumption B}} hold.

If we only know that the observations $ \{ f(x_{i}) \} $ are evaluated on grids $ \{ x_{i} \} $ that are sampled from a collection of random variables $ \{ X_{i} \} $ but nothing else, we cannot reach this conclusion.
\end{remark}
If we assume $ f \in \mathcal{H}_{a}^{s}(\R^{n}; \R) $, we can drop the {\it{Assumption A}} and get the same convergence rate as in (\ref{REV2:eqn8}).
\begin{theorem}\label{REV2:thm4}
Let $ u, f \in L^{2}(\R^{n}; \R) \cap \calC^{\infty}(\R^{n}; \R) $ satisfying $ \mathcal{L} u = f $, where $ L $ is a smooth differential operator whose inverse $ \mathcal{L}^{-1} $ is a bounded smooth operator. Suppose that we only have finitely many observations $ \{ f(x_{i}) \}_{i = 1}^{N} $ of the unknown ground truth $ f $. If only {\bf{Assumption B}} holds for $ P_{M}f $ for some i.i.d. complex sampling $ \{ Z_{l} \}_{l = 1}^{L} \subset K \subset \C^{n} $, then for any $ \epsilon > 0 $ and a fixed compact admissible set $ \mathcal{K} \subset L^{2}(\R^{n}; \R) $ that contains $ \overline{\text{Span}(V_{M})} $, there exists a neural operator $ \tilde{G}_{NO} $ parametrized by $ \tilde{\theta} $  such that
\begin{equation}\label{REV2:eqn9}
\begin{split}
    & \mathbb{E}_{Z, X}\left[ \sup_{x \in \R^{n}, f \in \mathcal{K}, \{ Z_{l} \} \subset K} \lvert \mathcal{L}^{-1}(f)(x) - \mathcal{B}_{a}^{-1} \circ \tilde{\mathcal{G}}_{NO} \circ \mathcal{B}_{a}(\hat{f})(x) \rvert \right] \\
    & \qquad \leqslant C_{0}^{n}(2a)^{\frac{n}{4}} \left( 1 + \frac{B_{M}}{n} \right)^{-\frac{n}{12}} \cdot \sqrt{M} \left( \epsilon + C_{\tilde{G}, \mathcal{K}} \sqrt{\frac{(M - 1)\lVert f \rVert_{L^{2}(\R^{n}; \R)}^{2}}{L} +  \frac{C_{M} M\lVert f \rVert_{L^{\infty}(\R^{n}; \R)}^{2}}{L}} \right) \\
    & \qquad \qquad + (1 + 2a(2M_{0} + n))^{-\frac{s - s_{0}}{2}} (4\pi)^{-\frac{n}{4}} \sqrt{\frac{\Gamma(s_{0} - \frac{n}{2})}{\Gamma(s_{0})}} \lVert \mathcal{L}^{-1} \rVert \lVert f \rVert_{\mathcal{H}_{a}^{s}}
\end{split}
\end{equation}
where $ \hat{f} = \sum_{\beta \in \Lambda_{M}} \hat{c}_{\beta}h_{\beta}^{a} $ determined by $ \{ Z_{l} \} $ and $ \{ f(x_{i}) \} $ as in Theorem \ref{REV:thm2}. In particular, there exists an appropriate choice of $ a $ such that the following dimension-free error bound holds:
\begin{equation}\label{REV2:eqn10}
\mathbb{E}_{Z, X}\left[ \sup_{x \in \R^{n}, f \in \mathcal{K}, \{ Z_{l} \} \subset K} \lvert \mathcal{L}^{-1}(f) - \mathcal{B}_{a}^{-1} \circ \tilde{\mathcal{G}}_{NO} \circ \mathcal{B}_{a}(\hat{f}) \rvert\right] = O\left( L^{-\frac{1}{2}} \right).
\end{equation}
\end{theorem}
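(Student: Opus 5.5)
The plan is to reduce Theorem \ref{REV2:thm4} to Theorem \ref{REV2:thm2} by splitting $f$ into its truncation $P_{M}f$ (here $P_{M}$ and $P_{M_0}$ are identified through $\Lambda_{M}=\{\lvert\beta\rvert\le M_0\}$) and the tail $f - P_{M}f$. By linearity of $\mathcal{L}^{-1}$ we write, pointwise in $x$,
\begin{equation*}
\mathcal{L}^{-1}(f) - \mathcal{B}_{a}^{-1}\circ\tilde{\mathcal{G}}_{NO}\circ\mathcal{B}_{a}(\hat f)
= \Big(\mathcal{L}^{-1}(P_{M}f) - \mathcal{B}_{a}^{-1}\circ\tilde{\mathcal{G}}_{NO}\circ\mathcal{B}_{a}(\hat f)\Big) + \mathcal{L}^{-1}(f - P_{M}f),
\end{equation*}
then take $\sup_{x}$ and expectations, and bound the two pieces separately.

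For the first piece, $P_{M}f \in \text{Span}(V_{M})$ satisfies \emph{Assumption A} by construction, and \emph{Assumption B} holds for $P_{M}f$ by hypothesis. So I will run the argument of Theorem \ref{REV2:thm2} verbatim with $f$ replaced by $P_{M}f$. Concretely, Theorem \ref{CON:thml1} gives $\mathbb{E}\lVert \mathbf{c}-\hat{\mathbf{c}}\rVert^{2} \le \frac{(M-1)\lVert P_Mf\rVert_{L^2}^2}{L} + \frac{M\sigma^2}{L}$. Proposition \ref{REV2:prop2} together with the Lipschitz bound (\ref{CON:eqn6}) controls the operator error in $\mathcal{F}_{a}^{2}$. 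Finally, the Hermite sup bound of Proposition \ref{REV2:prop1} converts the coefficient error to a sup-norm error, producing the factor $C_{0}^{n}(2a)^{n/4}(1+B_{M}/n)^{-n/12}\sqrt{M}$.

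Two adjustments are needed in that step. First, $\lVert P_{M}f\rVert_{L^{2}} \le \lVert f\rVert_{L^{2}}$ by orthogonality. Second, the variance constant $\sigma_{ll}^{2}$ in (\ref{REV2:eqn0}) must be compared with $\lVert f\rVert_{L^{\infty}}^{2}$ rather than $\lVert P_{M}f\rVert_{L^\infty}^{2}$. For this I will use $\lVert P_{M}f\rVert_{L^{\infty}} \le \lVert f\rVert_{L^{\infty}} + \sup\lvert f-P_{M}f\rvert$, bounded via Lemma \ref{CON:lemma0}, and absorb the resulting ratio into a constant $C_{M}$. This is exactly the $C_{M}$ appearing in (\ref{REV2:eqn9}).

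For the tail piece, I will use that $\mathcal{L}^{-1}$ is bounded and smooth, interpreting $\lVert\mathcal{L}^{-1}\rVert$ as the operator norm on $\mathcal{H}_{a}^{s}$. Then $u - \mathcal{L}^{-1}P_{M}f$ has $\mathcal{H}_{a}^{s}$ norm at most $\lVert\mathcal{L}^{-1}\rVert\lVert f - P_Mf\rVert_{\mathcal{H}_a^s}\le \lVert\mathcal{L}^{-1}\rVert\lVert f\rVert_{\mathcal{H}_{a}^{s}}$, and its Hermite coefficients are supported on $\lvert\beta\rvert>M_0$ if $\mathcal{L}^{-1}$ commutes with $P_{M}$, or are at least approximately so. I then apply the second estimate (\ref{CON:eqnl1}) of Lemma \ref{CON:lemma0} to get the deterministic term
\begin{equation*}
(1+2a(2M_0+n))^{-\frac{s-s_0}{2}}(4\pi)^{-\frac{n}{4}}\sqrt{\Gamma(s_0-\tfrac n2)/\Gamma(s_0)}\,\lVert\mathcal{L}^{-1}\rVert\lVert f\rVert_{\mathcal{H}_a^s}.
\end{equation*}

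\textbf{Main obstacle.} The hardest step is this tail estimate: a general $\mathcal{L}^{-1}$ does not preserve the spectral splitting, so $\sup\lvert\mathcal{L}^{-1}(f-P_Mf)\rvert$ is not directly a Hermite tail. I would first try the cruder route of (\ref{CON:eqnl2}) applied to $g=\mathcal{L}^{-1}(f-P_Mf)\in\mathcal{H}_a^{s}$, with $s>n/2$ giving a Sobolev-type sup bound. I would then recover the decay factor by bounding $\lVert f-P_Mf\rVert_{\mathcal{H}_a^{s_0}} \le (1+2a(2M_0+n))^{-(s-s_0)/2}\lVert f\rVert_{\mathcal{H}_a^s}$ and using boundedness of $\mathcal{L}^{-1}$ on $\mathcal{H}_a^{s_0}$.

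Finally, for (\ref{REV2:eqn10}), I will choose $M_0$ (equivalently $a$) so that the tail term is $O(L^{-1/2})$, for instance $M_0\gtrsim L^{1/(s-s_0)}$ with $M$ held in the bounded regime of the theorem. I will also choose $\epsilon\le L^{-1/2}$, and take $a$ small as in Theorem \ref{REV2:thm2} to absorb the $C_0^{n}(2a)^{n/4}$ dimensional factors.
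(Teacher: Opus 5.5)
Your proof of the main inequality of Theorem \ref{REV2:thm4} mostly follows the paper. You use the same split into $\mathcal{L}^{-1}(f-P_Mf)$ and $\mathcal{L}^{-1}(P_Mf)-\mathcal{B}_a^{-1}\circ\tilde{\mathcal{G}}_{NO}\circ\mathcal{B}_a(\hat f)$. You reduce the second piece to Theorem \ref{REV2:thm2} for $P_Mf$ in the same way, with $\lVert P_Mf\rVert_{L^2}\le\lVert f\rVert_{L^2}$ and the $L^\infty$ norm absorbed into $C_M$.

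On the tail, your route is genuinely different and sounder than the paper's. The paper applies (\ref{CON:eqnl1}) directly to $\mathcal{L}^{-1}(f)-\mathcal{L}^{-1}(P_Mf)$. That silently treats this difference as a Hermite tail, i.e.\ it assumes $\mathcal{L}^{-1}$ maps $\mathrm{Span}(V_M)^{\perp}$ into itself, which is already false for $\mathcal{L}=-\Delta+1$. Your route combines three facts:
\begin{enumerate}
\item the full-series embedding $\sup_x\lvert g\rvert\le(4\pi)^{-\frac n4}\sqrt{\Gamma(s_0-\frac n2)/\Gamma(s_0)}\,\lVert g\rVert_{\mathcal{H}_a^{s_0}}$, which is what the proof of (\ref{CON:eqnl2}) actually establishes, since it bounds the tail sum by the full sum;
\item boundedness of $\mathcal{L}^{-1}$ on $\mathcal{H}_a^{s_0}$;
\item the decay $\lVert f-P_Mf\rVert_{\mathcal{H}_a^{s_0}}\le(1+2a(2M_0+n))^{-\frac{s-s_0}{2}}\lVert f\rVert_{\mathcal{H}_a^{s}}$.
\end{enumerate}
Together these give exactly the stated last term with no commutation hypothesis, provided $\lVert\mathcal{L}^{-1}\rVert$ is read as the $\mathcal{H}_a^{s_0}$ operator norm. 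You can discard the ``commutes approximately'' variant.

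The gap is in (\ref{REV2:eqn10}). The tail term does not depend on $L$, so an $O(L^{-1/2})$ rate forces you to couple the truncation to $L$. Your choice ``$M_0\gtrsim L^{1/(s-s_0)}$ with $M$ held bounded'' is not available. For $f-P_Mf$ to live on $\lvert\beta\rvert>M_0$ you need $\{\lvert\beta\rvert\le M_0\}\subseteq\Lambda_M$ (your own identification), hence $M\ge\binom{M_0+n}{n}\ge(1+M_0/n)^n$. The first term contains $(1+B_M/n)^{-\frac n{12}}\sqrt M\cdot\sqrt{M/L}$ with $B_M=M_0$. It is therefore at least of order $(1+M_0/n)^{\frac{11n}{12}}L^{-\frac12}$, which with $M_0\gtrsim L^{1/(s-s_0)}$ decays strictly slower than $L^{-1/2}$, or even grows. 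In addition, the neural operator achieving a given $\epsilon$, and the constants $C_{\tilde{G},\mathcal{K}}$ and $C_M$, all depend on $M$ through $\mathcal{K}\supseteq\overline{\mathrm{Span}(V_M)}$, so they stop being fixed.

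Your parenthetical ``equivalently $a$'' conflicts with your last step. The tail factor $(1+2a(2M_0+n))^{-\frac{s-s_0}{2}}$ decays only as $a$ grows, while absorbing $C_0^n(2a)^{\frac n4}$ requires $a$ small. As $a\to0$ that factor tends to $1$, while $\lVert f\rVert_{\mathcal{H}_a^s}\ge\lVert f\rVert_{L^2}$, so the tail bound stays bounded away from zero. The paper's one-line appeal to ``$a\ll1$ as in Theorem \ref{REV2:thm2}'' has the same defect.

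What the main inequality actually yields is $O(L^{-1/2})$ plus an $L$-independent truncation error. Balancing $M_0$ against $L$ instead gives only a slower, $n$-dependent rate. (\ref{REV2:eqn10}) follows only when the truncation term vanishes, i.e.\ $f\in\mathrm{Span}(V_M)$, which is Theorem \ref{REV2:thm2} again.
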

\begin{proof}
Clearly,
\begin{align*}
    & \mathbb{E}_{Z, X}\left[ \sup_{x \in \R^{n}, f \in \mathcal{K}, \{ Z_{l} \} \subset K} \lvert \mathcal{L}^{-1}(f)(x) - \mathcal{B}_{a}^{-1} \circ \tilde{\mathcal{G}}_{NO} \circ \mathcal{B}_{a}(\hat{f})(x) \rvert \right] \\
    & \qquad \leqslant \mathbb{E}_{Z, X}\left[ \sup_{x \in \R^{n}, f \in \mathcal{K}, \{ Z_{l} \} \subset K} \lvert \mathcal{L}^{-1}(f)(x) - \mathcal{L}^{-1}(P_{M}f)(x) \rvert \right] \\ 
    & \qquad \qquad \mathbb{E}_{Z, X}\left[ \sup_{x \in \R^{n}, f \in \mathcal{K}, \{ Z_{l} \} \subset K} \lvert \mathcal{L}^{-1}(P_{M}f)(x) - \mathcal{B}_{a}^{-1} \circ \tilde{\mathcal{G}}_{NO} \circ \mathcal{B}_{a}(\hat{f})(x) \rvert \right] \\
    & \qquad  = E_{1} + E_{2}.
\end{align*}
For $ E_{1} $, we apply (\ref{CON:eqnl1}) in Lemma \ref{CON:lemma0},
\begin{align*}
    E_{1} & = \sup_{x \in \R^{n}, f \in \mathcal{K}, \{ Z_{l} \} \subset K} \lvert \mathcal{L}^{-1}(f)(x) - \mathcal{L}^{-1}(P_{M}f) \rvert \\
    & \leqslant  (1 + 2a(2M_{0} + n))^{-\frac{s - s_{0}}{2}} (4\pi)^{-\frac{n}{4}} \sqrt{\frac{\Gamma(s_{0} - \frac{n}{2})}{\Gamma(s_{0})}} \left\lVert \mathcal{L}^{-1}(f) - \mathcal{L}^{-1}(P_{M}f) \right\rVert_{\mathcal{H}_{a}^{s}(\R^{n}; \R)} \\
    & \leqslant (1 + 2a(2M_{0} + n))^{-\frac{s - s_{0}}{2}} (4\pi)^{-\frac{n}{4}} \sqrt{\frac{\Gamma(s_{0} - \frac{n}{2})}{\Gamma(s_{0})}} \lVert \mathcal{L}^{-1} \rVert \left\lVert f \right\rVert_{\mathcal{H}_{a}^{s}(\R^{n}; \R)}
\end{align*}
By the hypotheses of the statement with respect to $ P_{M} f $, we apply (\ref{REV2:eqn7}) and conclude that $ E_{2} $ satisfies
\begin{align*}
    & \mathbb{E}_{Z, X}\left[ \sup_{x \in \R^{n}, f \in \mathcal{K}, \{ Z_{l} \} \subset K} \lvert \mathcal{L}^{-1}(P_{M}f)(x) - \mathcal{B}_{a}^{-1} \circ \tilde{\mathcal{G}}_{NO} \circ \mathcal{B}_{a}(\hat{f})(x) \rvert \right] \\
    & \qquad \leqslant C_{0}^{n}(2a)^{\frac{n}{4}} \left( 1 + \frac{B_{M}}{n} \right)^{-\frac{n}{12}} \cdot \sqrt{M} \left( \epsilon + C_{\tilde{G}, \mathcal{K}} \sqrt{\frac{(M - 1)\lVert P_{M}f \rVert_{L^{2}(\R^{n}; \R)}^{2}}{L} +  \frac{M\lVert P_{M}f \rVert_{L^{\infty}(\R^{n}; \R)}^{2}}{L}} \right) \\
    & \qquad \leqslant C_{0}^{n}(2a)^{\frac{n}{4}} \left( 1 + \frac{B_{M}}{n} \right)^{-\frac{n}{12}} \cdot \sqrt{M} \left( \epsilon + C_{\tilde{G}, \mathcal{K}} \sqrt{\frac{(M - 1)\lVert f \rVert_{L^{2}(\R^{n}; \R)}^{2}}{L} +  \frac{C_{M} M\lVert f \rVert_{L^{\infty}(\R^{n}; \R)}^{2}}{L}} \right)
\end{align*}
where $ C_{M} $ depends only on $ f $ and $ M $. Combing the upper bound of $ E_{1} $ and $ E_{2} $ gives (\ref{REV2:eqn9}). By the same argument of Theorem \ref{REV2:thm2}, there exists an appropriate $ a \ll 1 $ such that (\ref{REV2:eqn10}) holds, which is dimension-free.
\end{proof}
\medskip

\section{Numerical Experiments}
In this section, we study the Segal--Bargmann neural operator (SBNO) through
2D Darcy flow, the 1D Burgers' equation, 
and two nonlinear
elliptic equations in spatial dimension $ 5 $. Based on our best understanding, the state-of-the-art neural operator approach for unknown $ \mathcal{L} $ is spatial dimension $ 3 $. Our goal is to learn the corresponding
solution operators from finitely many observations of input and output functions.
For the first three PDEs, we compare SBNO with FNO on a regular grid, and with GNOT and BelNet on scattered
points. Within each comparison, the methods use the same training and testing
functions and the same observations. The Darcy and Burgers'
experiments use $2000$ training functions and $200$ independent testing functions.
For the scattered-point experiments, we also evaluate the same testing functions
on a new point set to study how the learned operators depend on the observation locations.

For the Burgers' 
equation, the models receive only
observations of the initial field and predict solution values throughout
$0\leq t\leq1$ in one evaluation. Different from standard benchmark comparisons, no intermediate solution snapshots are given
as inputs for our experiments. This requires learning the evolution over the whole interval
without recursive time stepping. The new-point test is more demanding than
testing only at the training locations, since the operator must generalize
to both unseen initial conditions and unseen space--time sampling points.
All tests remain within the training time interval.

\subsection{Two-dimensional Darcy flow}
We first consider the Darcy equation
\begin{equation}\label{exp:darcy}
    -\nabla\cdot\bigl(\kappa(x)\nabla u(x)\bigr)=g
    \quad {\rm in} \Omega,\qquad
    u=0\quad\text{on }\partial \Omega,
\end{equation}
where $\kappa$ is the positive permeability field. The input is the collection
$\{\kappa(x_i)\}_{i=1}^{N}$, and the output consists of the solution values
$\{u(x_i)\}_{i=1}^{N}$. Thus, we seek an approximation of the map $\kappa\mapsto u$
with the forcing and boundary conditions fixed.

For the comparison with FNO~\cite{FNO}, we take $\Omega=(-1,1)^{2}$ and $g=1$.
The permeability fields are drawn from four families: discontinuous mosaics,
rotated mosaics, curved channels, and continuous multiscale fields with sharp
transitions. Their random parameters are chosen independently for each function,
with $0.1\leq\kappa\leq10$.
We solve~\eqref{exp:darcy} on a $1024\times1024$ uniform grid by a conservative
five-point finite-difference scheme, using harmonic averages of permeability
across adjacent nodes. Retaining every sixteenth node along each coordinate
gives the $64\times64$ grid used by both SBNO and FNO.

For the comparison with GNOT~\cite{GNOT} and BelNet~\cite{BelNet}, we use
$\Omega=(-1,1)^2$ and $g=-1$, which gives the convention
$\nabla\cdot(\kappa\nabla u)=1$ in the scattered-point dataset.
Here, $\kappa$ is a random expansion in $256$ normalized Hermite functions,
with a positive ground-state component and bounded perturbations chosen to
preserve positivity. We draw $1024^2$ interior nodes from a density proportional
to $e^{-2|x|^2}$ on $D$, add boundary nodes, and form a Delaunay triangulation.
The reference solutions are computed by conforming piecewise-linear finite
elements. We then select $32^2=1024$ interior nodes without replacement and take
the permeability and solution values directly from those nodes. In this
experiment, $32\times32$ denotes the number of observations; the points are
scattered rather than arranged as a tensor grid.

We plot the relative $ L^{2} $ test error of both SBNO and FNO on original $ 64 \times 64 $ uniform grids and expanded $ 128 \times 128 $ grids, the $ \mathbb{E}_{X}\left[ \frac{\lvert \hat{u}(X) - u(X) \rvert}{\lvert u(X) \rvert} \right]$ error, and the relative $ L^{\infty} $ error in Figure 1. Figure 1 demonstrates that SBNO outperforms FNO for all three types of errors, for both original grids and for generalization; in particular, the typical relative solution error for SBNO is one order better than FNO. 

We point out that FNO has a lower expected relative error on the  on the $ 128 \times 128 $ grid than the $ 64 \times 64 $ grid. This is a reasonable result since the smaller proportion of grid points in the outermost grid layers are chosen for $ 128 \times 128 $ grids, where the errors are larger for FNO. In contrast, the numerical results illustrates the stability of our SBNO approach everywhere.

We also give the training speed comparison table between a fast algorithm of SBNO and FNO in Table 1 for a $ 32 \times 32 $ original grid test. Note that the fast SBNO algorithm is not used in training the results in Figure 1 above. Table 1 shows that the fast SBNO can reach the same accuracy as FNO, meanwhile 10 times faster than the FNO. 

\begin{figure}[h]
\includegraphics[width=165mm, height=120mm]{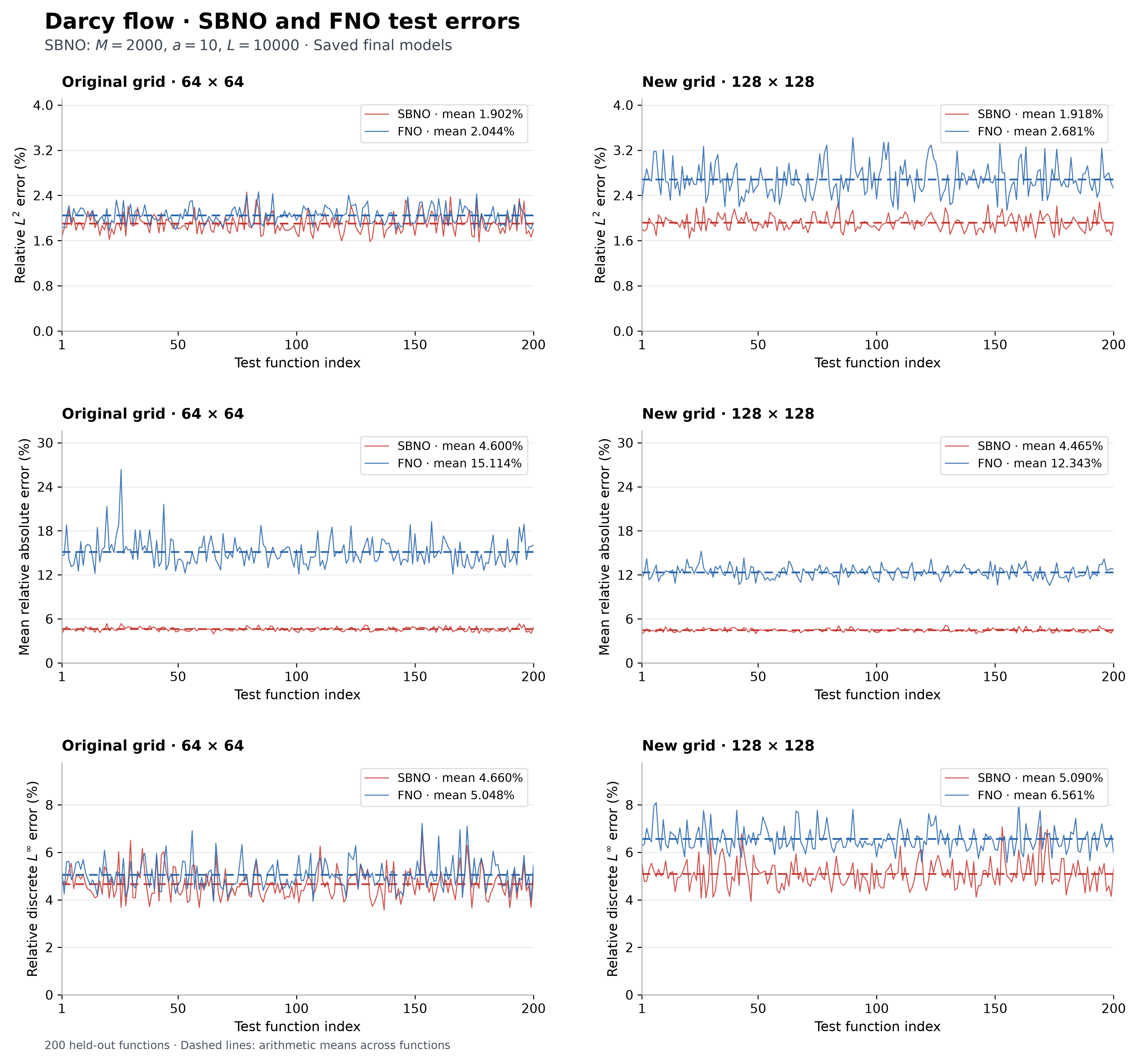} 
    \caption{2D Darcy Flow FNO vs SBNO}
\end{figure}
\begin{table}[ht]
    \centering
    \renewcommand{\arraystretch}{1.35}
    \setlength{\tabcolsep}{14pt}

    \begin{tabular}{lcc}
        \toprule

        \multicolumn{3}{c}{\Large\bfseries Benchmark Comparison: SBNO vs.\ FNO} \\

        \midrule

        \textbf{Metric}
        &
        \textbf{SBNO}
        &
        \textbf{FNO}
        \\
        \midrule

        Training-pipeline time (s)
        &
        $T_{\mathrm{train}}^{\mathrm{SBNO}} = 43.43s$
        &
        $T_{\mathrm{train}}^{\mathrm{FNO}} = 372.67s$
        \\

        Wall-clock time (s)
        &
        $T_{\mathrm{wall}}^{\mathrm{SBNO}} = 45.70s$
        &
        $T_{\mathrm{wall}}^{\mathrm{FNO}} = 372.68s$
        \\

       Training squared $ L^{2} $ loss & $E_{\mathrm{train}}^{\mathrm{SBNO}} = 3.85 \times 10^{-4} $ & $E_{\mathrm{train}}^{\mathrm{FNO}} = 4.86 \times 10^{-4}$ \\ 
       
       Relative Test $ L^{2} $ error & $E_{\mathrm{test}}^{\mathrm{SBNO}}= 1.94 \times 10^{-2}$ & $E_{\mathrm{test}}^{\mathrm{FNO}} = 2.03 \times 10^{-2} $ \\
       
        Optimizer steps
        &
        $N_{\mathrm{step}}^{\mathrm{SBNO}} = 5000 $
        &
        $N_{\mathrm{step}}^{\mathrm{FNO}} = 50000 $
        \\

        \bottomrule
    \end{tabular}

    \caption{Computational benchmark comparison between the
    Segal--Bargmann Neural Operator (SBNO) and the Fourier
    Neural Operator (FNO).}

    \label{tab:sbno-fno-benchmark}

\end{table}

On a total of $ 1024 $ scattered grids, we also plot the relative $ L^{2} $ test error of SBNO, GNOT and BelNet, the $ \mathbb{E}_{X}\left[ \frac{\lvert \hat{u}(X) - u(X) \rvert}{\lvert u(X) \rvert} \right]$ error, and the relative $ L^{\infty} $ error in Figure 2. Figure 2 demonstrates that SBNO is comparable with GNOT and outperforms BelNet for all three types of errors for unseen test functions on original grids; in particular, the testing error of SBNO is one order better than the BelNet. Although the GNOT performs a little better in this scenario, this experiments also demonstrates the wide applicability of our SBNO approach, especially we give experimental evidence of the outstanding behavior of SBNO for other PDEs.

\begin{figure}[h]
\includegraphics[width=165mm, height=120mm]{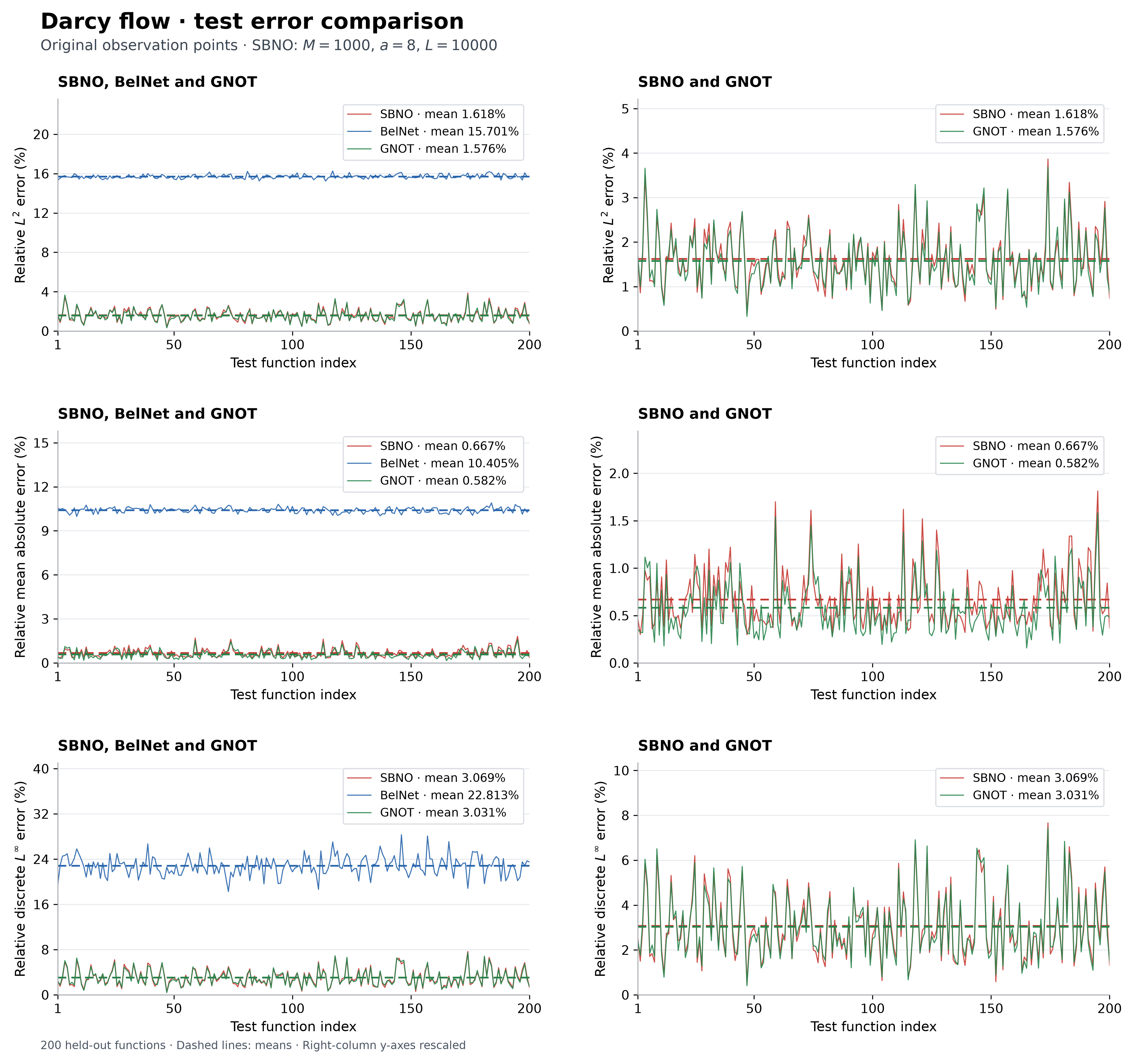} 
    \caption{2D Darcy Flow SBNO vs GNOT vs BelNet}
\end{figure}

\subsection{The one-dimensional Burgers' equation}
We next consider the viscous Burgers' equation on $[0,1]\times(-1,1)$:
\begin{equation}\label{exp:burgers}
    \partial_t u+u\,\partial_xu=0.1\,\partial_{xx}u,
    \qquad u(0,x)=u_0(x),
\end{equation}
with periodic boundary conditions
$u(t,-1)=u(t,1)$ and $\partial_xu(t,-1)=\partial_xu(t,1)$.
Our goal is to learn the map $u_0\mapsto u(t,x)$ over the whole time interval.
At a sampled point $(t_i,x_i)$, the input value is $u_0(x_i)$ and the output
value is $u(t_i,x_i)$. The initial condition is therefore extended constantly
in time when constructing the input observations.

We generate $u_0$ as a zero-mean Gaussian Fourier series with frequencies
$1,\ldots,16$, using independent Gaussian sine and cosine coefficients whose
variances decrease with frequency. A fixed scaling gives an expected spatial
mean square of $0.5^2$.
The reference solution is obtained from the Cole--Hopf formula, using $2048$
quadrature points for the initial heat potential and $128$ positive Fourier
frequencies for its evolution. This representation allows direct evaluation at
the scattered space--time points.
Writing $s_t=2t-1$, we draw $1024^2$ points $(s_t,x)$ from the same truncated
Gaussian law as in the scattered Darcy experiment and select $32^2=1024$ of
them for training and testing. SBNO is compared with GNOT~\cite{GNOT} and
BelNet~\cite{BelNet} using these observations.

For each test initial condition, predictions are compared with the reference
values at the same $1024$ space--time points. We first use the training point
set and then a new, independently sampled set
$\{(t'_i,x'_i)\}_{i=1}^{1024}$, with inputs $u_0(x'_i)$ and targets
$u(t'_i,x'_i)$. The same $200$ unseen initial conditions are used for both
tests. Thus, $32^2$ counts scattered points over the whole interval; it does
not denote $32$ time slices. The second test additionally measures
generalization to new observation locations.

On a total of $ 1024 $ scattered grids in $ [0, 1]_{t} \times [-1, 1]_{x} $, we plot the relative $ L^{2} $ test error of SBNO, GNOT and BelNet, the $ \mathbb{E}_{X}\left[ \frac{\lvert \hat{u}(X) - u(X) \rvert}{\lvert u(X) \rvert} \right]$ error, and the relative $ L^{\infty} $ error in Figure 3. Figure 3 demonstrates that SBNO outperforms GNOT and BelNet for all three types of errors for unseen test functions on original grids; in particular, the testing error of SBNO is at least half of the GNOT
for all three types of errors, and is one order better than the BelNet.
\begin{figure}[h]
\includegraphics[width=165mm, height=70mm]{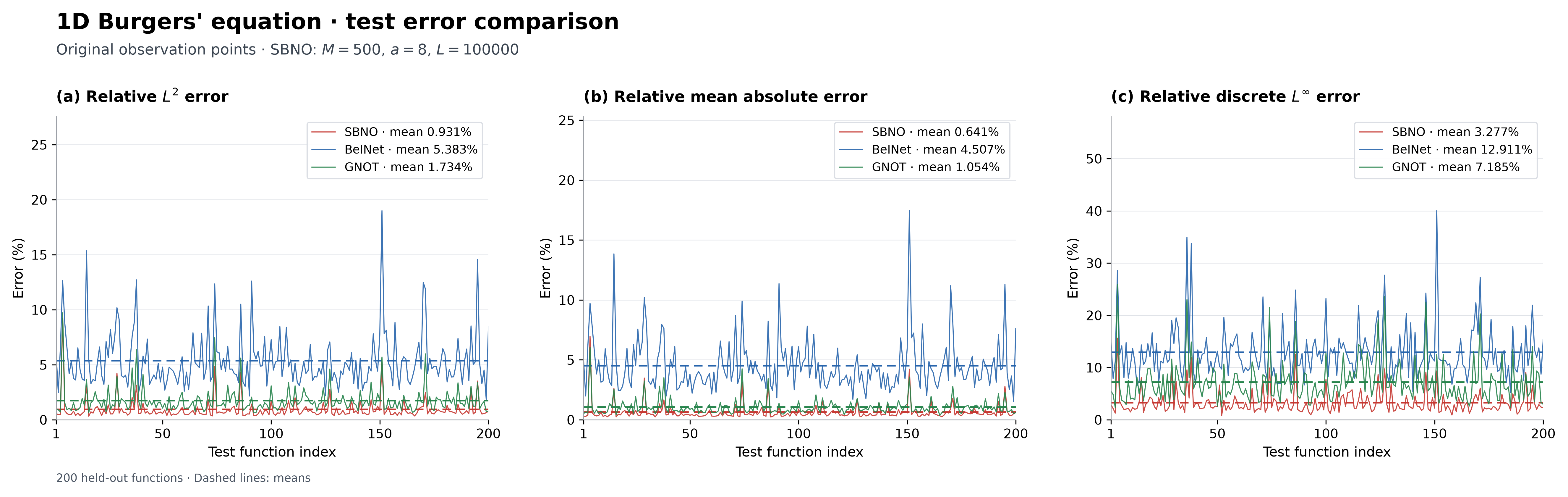} 
    \caption{2D Darcy Flow SBNO vs GNOT vs BelNet}
\end{figure}

\subsection{Five-dimensional nonlinear elliptic equations}
Finally, we consider the two equations
\begin{equation}\label{exp:nonlinear5d}
    -\Delta u + u +u^3=f,
    \qquad\text{and}\qquad
    -\Delta u+e^u+u^3=f,
    \qquad x\in\mathbb{R}^5,
\end{equation}
with $u(x)\to0$ as $|x|\to\infty$.
We observe the functions on $[-1,1]^5$ and learn the map $f\mapsto u$ from
the sampled pairs $\{f(x_i),u(x_i)\}_{i=1}^{N}$.
The boundary values of the restricted solutions are inherited from their
whole-space definitions and are generally nonzero on the boundary of the
observation cube. These are stationary problems, so no initial condition is
prescribed.

To generate the data, we first choose a solution
\begin{equation}\label{exp:manufactured5d}
    u^{(k)}(x)=\sum_{\beta\in S_k}c^k_\beta h^5_\beta(x),
    \qquad S_k\subset\{\beta\in\mathbb{N}_0^5:|\beta|\leq6\},
    \qquad |S_k|=50,
\end{equation}
where $h^5_\beta$ denotes a normalized Hermite function with parameter $a=5$.
For each function, we select the $50$ indices uniformly without replacement
from the $2002$ available modes and draw the coefficients independently from
$\mathcal{N}(0,1/50)$.
We then evaluate $f^{(k)}=-\Delta u^{(k)}+ u^{(k)} + (u^{(k)})^3$ or
$f^{(k)}=-\Delta u^{(k)}+e^{u^{(k)}}+(u^{(k)})^3$ analytically at uniformly sampled points
in $[-1,1]^5$. Both nonlinear terms are evaluated without truncation.
This gives $2000$ training pairs and $1000$ independent testing pairs for each
equation. Only the sampled function values and coordinates are supplied to
the models; the Hermite coefficients are used to generate the data.
For the exponential equation, $f(x)\to1$ at infinity, while $f-1$ and $u$
are square-integrable on $\mathbb{R}^5$. The observations and reported errors
are restricted to the cube. Note that the second PDE is a challenging one: once we take the exponential nonlinearity of $ u $, the input $ f $ encodes infinitely Gauss-Hermitian modes. 

We plot the relative $ L^{2} $ test error, the $ \mathbb{E}_{X}\left[ \frac{\lvert \hat{u}(X) - u(X) \rvert}{\lvert u(X) \rvert} \right]$ error, and the relative $ L^{\infty} $ error for SBNO in Figure 4. We use a total of $ 1 \times 10^{5} $ real samplings for both $ -\Delta u + u + u^{3} = f $ and $ -\Delta u + e^{u} + u^{3} = f $. Figure 4 demonstrates that SBNO can achieve a very good testing error for relatively simple nonlinearity, and can still learn the solution operator even when highly nonlinear term $ e^{u} $. We expect better numerical results for better designs of neural networks. These results suggest that our proposed method has great potential to be applied to more challenging PDEs and to higher spatial dimensions.
\begin{figure}[h]
\includegraphics[width=165mm, height=100mm]{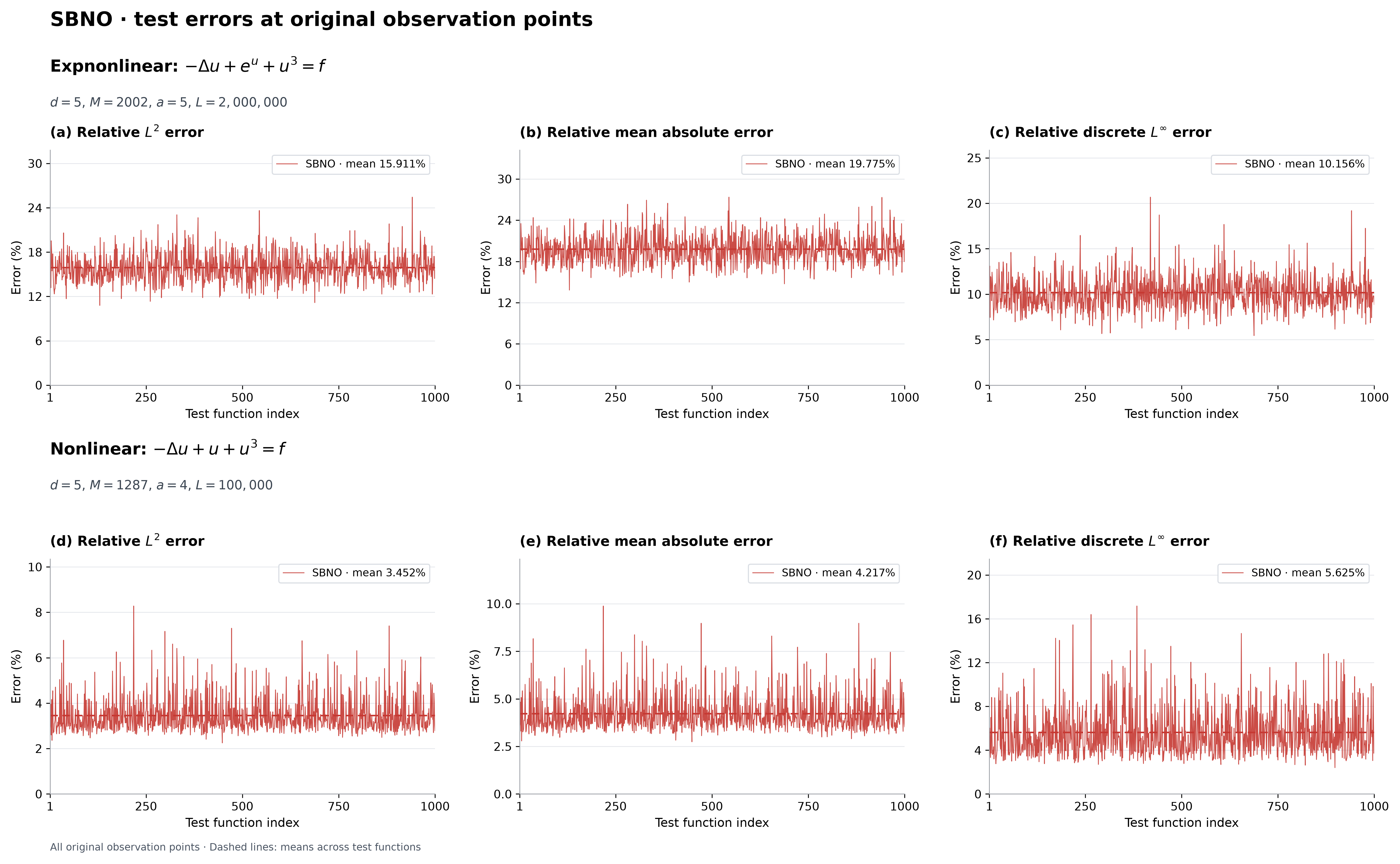} 
    \caption{5D Two Nonlinear PDEs SBNO}
\end{figure}
\medskip

\noindent {\bf{Data Availability Statement and Conflict of Interest.}} \\
No external datasets were used in this study. All data used in the numerical experiments were generated computationally using the procedures described in this paper. The authors declare no conflicts of interest related to this work.
\medskip

\section{Appendix: Validity of Assumption B}
In this section, we show that with probability $ 1- \delta $, where $ \delta $ can be aribtrarily small, the {\it{Assumption B}} with $ \eta_{l} \equiv 0 $ for all $ l $ holds with some assumptions that will be stated below. It indicates that the {\it{Assumption B}} is a general and reasonable assumption. The proof follows the construction of \cite[Theorem 3.2]{DU}.

\subsection{Model and statement}

Fix observation locations $x_1,\ldots,x_N$ and a training set

\[
\{(\mathbf f^{(k)},y_k)\}_{k=1}^K,\qquad
\mathbf f^{(k)}=(f^{(k)}(x_1),\ldots,f^{(k)}(x_N))^\top\in\mathbb R^N,
\quad \|\mathbf f^{(k)}\|_2=1,\quad |y_k|\le C_y.
\tag{1}
\]

The training set does not change with $L$. Let $A(z,x)$ be fixed; in the intended application, $A=A_{a,M}$ with fixed $a$ and $M$. Draw $Z_1,\ldots,Z_L$ independently from a fixed distribution $\rho$, and hold them fixed during training. Set

\[
v_l(W,Z_l)=
 \bigl(w_{l1}A(Z_l,x_1),\ldots,w_{lN}A(Z_l,x_N)\bigr)^\top.
\tag{2}
\]

To use the real ReLU with a complex kernel, take the real part of the preactivation:

\[
\begin{aligned}
u_k(W)&=\frac1{\sqrt L}\sum_{l=1}^L b_l
 \sigma\!\left(\operatorname{Re}
       (v_l(W,Z_l)^\top\mathbf f^{(k)})\right),\\
\sigma(s)&=\max\{s,0\},\qquad
\mathcal L(W)=\frac12\sum_{k=1}^K(u_k(W)-y_k)^2.
\end{aligned}
\tag{3}
\]

Write $w_l=p_l+iq_l$ and $\theta_l=(p_l,q_l)\in\mathbb R^{2N}$. Initialize $\theta_l(0)\sim\mathcal N(0,I_{2N})$ and $b_l\sim\operatorname{Unif}\{-1,1\}$, independently across neurons and independently of the $Z_l$. Keep $b_l$ fixed and train only $\theta_l$ by Euclidean gradient flow for $\mathcal L$, using the ReLU differential-inclusion convention. Thus the trained parameters are the original $w_{li}$, through their real and imaginary parts.

For the proof, define

\[
\xi_k(z)=
 \begin{pmatrix}
  (\operatorname{Re}A(z,x_i)\,\mathbf f_i^{(k)})_{i=1}^N\\
  (-\operatorname{Im}A(z,x_i)\,\mathbf f_i^{(k)})_{i=1}^N
 \end{pmatrix},
\qquad B(z)=\max_{1\le i\le N}|A(z,x_i)|.
\tag{4}
\]

Then $\operatorname{Re}(v_l^\top\mathbf f^{(k)})=\theta_l^\top\xi_k(Z_l)$ and $\|\xi_k(z)\|_2\le B(z)$. The vectors $\xi_k(Z_l)$ are auxiliary expressions for differentiating (3); the training inputs remain $\mathbf f^{(k)}$.

Define

\[
H^\infty_{kj}
 =\mathbb E_{Z,G}\left[
 \xi_k(Z)^\top\xi_j(Z)\,
 \mathbf1_{\{G^\top\xi_k(Z)>0,\ G^\top\xi_j(Z)>0\}}\right],
\quad
G\sim\mathcal N(0,I_{2N}),\quad G\perp Z.
\tag{5}
\]

\begin{theorem}
Suppose
\[
\mu_4:=\mathbb E_\rho B(Z)^4<\infty,
\qquad \lambda_0:=\lambda_{\min}(H^\infty)>0.
\tag{6}
\]

Write $\mu_j=\mathbb E_\rho B(Z)^j$ for $j=2,3$ and $S=C_y^2+\mu_2/2$. There is a universal constant $C>0$ such that, for any $\delta\in(0,1)$, if

\[
L\ge C\left(
 \frac{K^2\mu_4}{\lambda_0^2\delta}
 +\frac{K^6\mu_3^2S}{\lambda_0^4\delta^3}
 \right),
\tag{7}
\]

then, with probability at least $1-\delta$ over complex sampling and initialization,

\[
\|u(t)-y\|_2^2
 \le e^{-\lambda_0t}\|u(0)-y\|_2^2
 \quad\text{for all }t\ge0.
\tag{8}
\]

In particular, $\mathcal L(W(t))\to0$ as $t\to\infty$ for each such fixed finite width. The event can be chosen to work for every global admissible gradient-flow trajectory.
\end{theorem}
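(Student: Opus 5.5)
We adapt the Du--Zhai--Poczos--Singh argument of \cite[Theorem 3.2]{DU} to the random-feature setting in which each neuron $l$ carries its own random input map $\xi_k(Z_l)$. Write $\theta=(\theta_1,\dots,\theta_L)$ and $u=(u_1,\dots,u_K)$. Along gradient flow, $\dot u=-H(t)(u-y)$, with Gram matrix
\[
H_{kj}(t)=\frac1L\sum_{l=1}^L \xi_k(Z_l)^\top\xi_j(Z_l)\,\mathbf 1_{\{\theta_l(t)^\top\xi_k(Z_l)>0,\ \theta_l(t)^\top\xi_j(Z_l)>0\}}
\]
(we use $b_l^2=1$ and a fixed Clarke selection at the kinks). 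Then $\frac{d}{dt}\|u-y\|_2^2=-2(u-y)^\top H(t)(u-y)$. So (8) follows once $\lambda_{\min}(H(t))\ge\lambda_0/2$ for all $t$, and the plan is to prove exactly that on one high-probability event.

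\emph{Step 1 (initial Gram matrix).} Each entry $H_{kj}(0)$ is an average of $L$ i.i.d.\ terms, bounded by $B(Z_l)^2$, with mean $H^\infty_{kj}$. This holds because $\theta_l(0)\sim G$ is independent of $Z_l$. Chebyshev plus a union bound over the $K^2$ entries gives
\[
\|H(0)-H^\infty\|_F\le \frac{K\sqrt{\mu_4}}{\sqrt{L\delta}}
\]
with probability at least $1-\delta/3$. Under the first term of (7) this is at most $\lambda_0/4$.

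\emph{Step 2 (stability of the activation pattern).} For each $l$, let $A_{lk}=\{|\theta_l(0)^\top\xi_k(Z_l)|\le R\,B(Z_l)\}$. Conditioning on $Z_l$, the scalar $\theta_l(0)^\top\xi_k(Z_l)$ is Gaussian with standard deviation at most $B(Z_l)$, and indeed equal to $\|\xi_k(Z_l)\|$. This gives $\mathbb P(A_{lk})\lesssim R$ when normalised by $\|\xi_k\|$; I will define $A_{lk}$ with $\|\xi_k(Z_l)\|$ in place of $B(Z_l)$ to get $\mathbb P(A_{lk}\mid Z_l)\le 2R/\sqrt{2\pi}$ uniformly. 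If $\|\theta_l(t)-\theta_l(0)\|\le R$ and $A_{lk}$ fails, the sign of neuron $l$ on sample $k$ cannot change. Hence
\[
\|H(t)-H(0)\|_F\le \frac{2}{L}\sum_{k,j}\sum_l B(Z_l)^2\big(\mathbf 1_{A_{lk}}+\mathbf 1_{A_{lj}}\big),
\]
whose expectation is $\lesssim K^2\mu_2 R$. Markov's inequality, with failure probability $\delta/3$, makes this at most $\lambda_0/4$ provided $R\lesssim \lambda_0\delta/(K^2\mu_2)$. The $\mu_3$ in (7) suggests the intended bookkeeping puts $B^3$ weights here, so I may instead bound with $\mu_3$ after Cauchy--Schwarz.

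\emph{Step 3 (weights stay in the ball).} While $\lambda_{\min}(H(s))\ge\lambda_0/2$ on $[0,t]$, we have $\|u(s)-y\|\le e^{-\lambda_0 s/2}\|u(0)-y\|$ and
\[
\|\dot\theta_l\|\le \frac{1}{\sqrt L}\sum_k|u_k-y_k|\,\|\xi_k(Z_l)\|\le \frac{\sqrt K\,B(Z_l)}{\sqrt L}\|u-y\|.
\]
Integrating gives
\[
\|\theta_l(t)-\theta_l(0)\|\le \frac{2\sqrt K\,B(Z_l)\,\|u(0)-y\|}{\sqrt L\,\lambda_0}.
\]
Since $B(Z_l)$ is not bounded uniformly, I will use neuron-dependent radii $R_l\propto B(Z_l)$ in Step 2, which produces the $\mu_3$ moment. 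I will also bound $\mathbb E\|u(0)-y\|^2\le K(C_y^2+\mu_2/2)=KS$ via $\mathbb E[u_k(0)^2]=\mathbb E[\sigma(G^\top\xi_k)^2]\le\mu_2/2$, and apply Markov with failure probability $\delta/3$. Matching $R$ with Step 2 then yields the second term $K^6\mu_3^2S/(\lambda_0^4\delta^3)$ of (7).

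\emph{Step 4 (continuity argument).} Let $t^*$ be the first time $\lambda_{\min}(H)<\lambda_0/2$ or some $\theta_l$ exits its ball. On the intersection of the three events, Steps 1--3 show that neither can happen, so $t^*=\infty$ and (8) holds. Because the event is defined purely in terms of initialization and $Z$, the argument applies to every admissible trajectory. I expect the main obstacle to be Step 2--3 coupling: making the scale-dependent radii and the unbounded $B(Z_l)$ consistent while also handling $H$'s discontinuity in the differential inclusion. For the latter I would use the Clarke/energy identity along absolutely continuous trajectories.
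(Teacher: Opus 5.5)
Your final plan is essentially the paper's proof. It uses concentration of $H(0)$ around $H^\infty$ from second moments, and activation-stability events with the neuron-dependent radii $R\,B(Z_l)$, which is exactly where the $\mu_3$ moment comes from. It then applies Markov to get $\|u(0)-y\|_2^2\le 3KS/\delta$, bounds the displacement by $\|\theta_l(t)-\theta_l(0)\|_2\le B(Z_l)\,2\sqrt K\,\|u(0)-y\|_2/(\lambda_0\sqrt L)$, and closes with a no-first-exit argument on the intersection of the three events; the matching of radii does reproduce both terms of (7).

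Two small repairs are needed. First, the bound $\|H(0)-H^\infty\|_F\lesssim K\sqrt{\mu_4/(L\delta)}$ should come from Markov applied to $\mathbb E\|H(0)-H^\infty\|_F^2\le K^2\mu_4/L$. An entrywise Chebyshev-plus-union bound loses an extra factor of $K$, which (7) does not cover. Second, the stability events must exclude $\xi_k(Z_l)=0$, as the paper does with $E_{kl}(R)$. In that case the conditional probability is $1$ rather than $O(R\,B(Z_l))$, but the Gram contribution vanishes, so nothing is lost.
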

\begin{proof}
We follow the four steps in \S3 of \cite{DU}: concentration at initialization, stability near initialization, control of weight movement, and continuation for all time.

Write $\xi_{kl}=\xi_k(Z_l)$ and let $s_{kl}(t)$ be the ReLU derivative at $\theta_l(t)^\top\xi_{kl}$, with a value in $[0,1]$ permitted at zero. Almost everywhere,

\[
\begin{aligned}
\dot\theta_l
 &=\frac{b_l}{\sqrt L}\sum_{k=1}^K(y_k-u_k)s_{kl}\xi_{kl},\\
\dot u&=H(t)(y-u),\qquad
H_{kj}(t)=\frac1L\sum_{l=1}^L
 \xi_{kl}^\top\xi_{jl}s_{kl}(t)s_{jl}(t).
\end{aligned}
\tag{9}
\]

This is the Gram matrix for training $W$ itself, with

\[
\xi_k(z)^\top\xi_j(z)
 =\sum_{i=1}^N|A(z,x_i)|^2\mathbf f_i^{(k)}\mathbf f_i^{(j)}.
\]

The summands of $H(0)$ are independent across neurons, have mean $H^\infty$, and their entries have second moments at most $\mu_4$. Thus

\[
\mathbb E\|H(0)-H^\infty\|_F^2\le\frac{K^2\mu_4}{L}.
\tag{10}
\]

Markov's inequality and the first term of (7) give, with probability at least $1-\delta/3$,

\[
\|H(0)-H^\infty\|_2\le\lambda_0/4,\qquad
\lambda_{\min}(H(0))\ge3\lambda_0/4.
\tag{11}
\]

Put $B_l=B(Z_l)$ and $c_0=\sqrt{2/\pi}$. For $R>0$, define

\[
E_{kl}(R)=
 \left\{\xi_{kl}\ne0,\ 
 |\theta_l(0)^\top\xi_{kl}|
 \le R B_l\|\xi_{kl}\|_2\right\}.
\]

Conditionally on $Z_l$, the normalized nonzero Gaussian projection has variance one, so

\[
\mathbb P(E_{kl}(R)\mid Z_l)\le c_0 R B_l.
\tag{12}
\]

If $\|\theta_l-\theta_l(0)\|_2\le R B_l$, the activation on input $k$ can change only on $E_{kl}(R)$. When $\xi_{kl}=0$, its Gram contribution is zero. Uniformly over this neighborhood and over derivative choices at zero,

\[
\|H-H(0)\|_2
 \le T_R:=
 \frac1L\sum_{l=1}^L\sum_{k,j=1}^K
 B_l^2\bigl(\mathbf1_{E_{kl}(R)}+\mathbf1_{E_{jl}(R)}\bigr).
\]

By (12), $\mathbb E T_R\le2c_0K^2R\mu_3$. Choose

\[
R=\frac{\delta\lambda_0}{24c_0K^2\mu_3}.
\tag{13}
\]

Then $\mathbb P(T_R>\lambda_0/4)\le\delta/3$. Together with (11), this gives $\lambda_{\min}(H)\ge\lambda_0/2$ throughout the neighborhood. The radii $R B_l$ explicitly retain the dependence on $Z_l$.

The independent output signs give $\mathbb E u_k(0)=0$, and the Gaussian ReLU identity gives

\[
\mathbb E u_k(0)^2
 =\tfrac12\mathbb E\|\xi_k(Z)\|_2^2\le\mu_2/2.
\]

Therefore $\mathbb E\|u(0)-y\|_2^2\le KS$, and with probability at least $1-\delta/3$,

\[
E_0^2:=\|u(0)-y\|_2^2\le\frac{3KS}{\delta}.
\tag{14}
\]

As long as the trajectory remains in the neighborhood, (9) implies

\[
\frac{d}{dt}\|u-y\|_2^2
 =-2(u-y)^\top H(t)(u-y)
 \le-\lambda_0\|u-y\|_2^2.
\]

Hence $\|u(t)-y\|_2\le e^{-\lambda_0t/2}E_0$ and

\[
\begin{aligned}
\|\theta_l(t)-\theta_l(0)\|_2
 &\le\frac{B_l\sqrt K}{\sqrt L}
       \int_0^t\|u(s)-y\|_2\,ds\\
 &\le B_l R',\qquad
R'=\frac{2\sqrt K\,E_0}{\lambda_0\sqrt L}.
\end{aligned}
\tag{15}
\]

Equations (7), (13), and (14), with a sufficiently large universal $C$, give $R'\le R/2$.

The three events above hold jointly with probability at least $1-\delta$. On their intersection, there can be no first exit from the neighborhood: up to any proposed exit time, (15) bounds each displacement by half its allowed radius. Neurons with $B_l=0$ have zero velocity. The bounded displacements also allow continuation for all time. The Gram lower bound therefore holds globally, and integration of the loss differential inequality proves (8).
\end{proof}

\subsection{Application to the truncated Segal--Bargmann kernel}

\begin{corollary}
Let
\[
A(z,x)=A_{a,M}(z,x)
 =\sum_{\beta\in\Lambda_M}e_\beta^a(z)h_\beta^a(x),
\qquad
d\rho_a(z)=\frac{\sum_{\beta\in\Lambda_M}|e_\beta^a(z)|^2}{M}\,d\mu_a(z),
\tag{16}
\]

with fixed $a>0$ and finite $\Lambda_M$ containing $0$. If no two distinct training vectors $\mathbf f^{(k)}$ are parallel, both conditions in (6) hold, so (7) and (8) apply.
\end{corollary}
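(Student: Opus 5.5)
The plan is to verify the two conditions in (6) separately for $A=A_{a,M}$ and $\rho=\rho_a$. Once $\mu_4<\infty$ and $\lambda_0>0$ are established, the preceding theorem applies verbatim and yields (7) and (8).

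\emph{Moment condition.} For each fixed $x_i$, the function $z\mapsto A_{a,M}(z,x_i)=\sum_{\beta\in\Lambda_M}e_\beta^a(z)h_\beta^a(x_i)$ is a polynomial in $z$ of degree at most $B_M$. Hence $B(z)\le C_{N,M,a}(1+\lvert z\rvert)^{B_M}$, where the constant depends only on the finitely many values $h_\beta^a(x_i)$. The density of $\rho_a$ with respect to Lebesgue measure is $\frac{K_M(z)}{M}(a/\pi)^n e^{-a\lvert z\rvert^2}$, which is a polynomial times a Gaussian. Therefore every polynomial moment is finite, and in particular $\mu_4=\mathbb E_{\rho_a}B(Z)^4<\infty$. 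The same argument gives $\mu_2,\mu_3<\infty$.

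\emph{Positivity of $H^\infty$.} First condition on $Z=z$. Write $H^\infty=\mathbb E_Z[H(z)]$, where $H(z)_{kj}=\mathbb E_G[\xi_k(z)^\top\xi_j(z)\mathbf 1_{\{G^\top\xi_k(z)>0,\,G^\top\xi_j(z)>0\}}]$. Each $H(z)$ is a Gram matrix of the $L^2(G)$-valued features $\phi_k(z,G)=\xi_k(z)\mathbf 1_{\{G^\top\xi_k(z)>0\}}$, so it is positive semidefinite. It therefore suffices to show that $H(z)$ is positive definite for $z$ in a set of positive $\rho_a$-measure. Indeed, then $\alpha^\top H^\infty\alpha=\mathbb E_Z[\alpha^\top H(z)\alpha]>0$ for every $\alpha\ne0$, and since $H^\infty$ is a finite matrix this gives $\lambda_0>0$.

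For the pointwise statement I will invoke the argument of \cite[Theorem 3.1]{DU}. If the vectors $\xi_1(z),\ldots,\xi_K(z)$ are nonzero and pairwise nonparallel (in the sense of not being real multiples of each other, including negative multiples), then the features $\phi_k(z,\cdot)$ are linearly independent in $L^2$. To see this, suppose $\sum_k\alpha_k\phi_k=0$ a.s. Fix $k$ and choose $G_0$ with $G_0^\top\xi_k=0$ but $G_0^\top\xi_j\ne0$ for all $j\ne k$; such a $G_0$ exists precisely by non-parallelism. Perturbing $G_0$ by $\pm\epsilon\xi_k$ within a small ball flips only the $k$-th indicator. Subtracting the two resulting identities, on sets of positive Gaussian measure, gives $\alpha_k\xi_k=0$, hence $\alpha_k=0$.

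It remains to show non-parallelism for almost every $z$. Identify $\xi_k(z)\in\mathbb R^{2N}$ with the complex vector $\overline{(A(z,x_i)\mathbf f^{(k)}_i)_i}$. A real proportionality $\xi_k(z)=c\,\xi_j(z)$ then reads $A(z,x_i)(\mathbf f^{(k)}_i-c\mathbf f^{(j)}_i)=0$ for every $i$. Since $0\in\Lambda_M$ and the monomials $z^\beta$ are linearly independent, $A_{a,M}(\cdot,x_i)$ is a nonzero polynomial: its constant term is $e_0^ah_0^a(x_i)$, and $h_0^a(x_i)>0$. Its zero set in $\C^n$ is therefore Lebesgue-null, and hence $\rho_a$-null. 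Off the finite union of these null sets, every $A(z,x_i)$ is nonzero. Consequently proportionality forces $\mathbf f^{(k)}=c\,\mathbf f^{(j)}$, which contradicts the hypothesis. Likewise $\xi_k(z)\ne0$ because $\lVert\mathbf f^{(k)}\rVert_2=1$.

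The main obstacle is the Du-type linear-independence step. In particular, one must make sure that "parallel" in the hypothesis is read to include negative multiples, since the perturbation argument needs $G_0^\top\xi_j\ne0$ for all $j\ne k$, and that the ball around $G_0$ has positive Gaussian measure. Everything else is a measure-zero or moment bookkeeping argument.
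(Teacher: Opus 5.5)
Your proposal is correct and follows essentially the same route as the paper's proof. Both arguments get $\mu_4<\infty$ from the polynomial-times-Gaussian structure, use the nonvanishing constant term $h_0^a(x_i)>0$ to make the zero sets of $A_{a,M}(\cdot,x_i)$ null, transfer non-parallelism through the injective map $\mathbf f\mapsto\xi_k(z)$, and apply \cite[Theorem 3.1]{DU} conditionally on $Z=z$ before integrating. The only difference is cosmetic: the paper applies that theorem to the normalized vectors $\xi_k(z)/\lVert\xi_k(z)\rVert_2$ and then rescales, whereas you reproduce its hyperplane-crossing argument directly for the unnormalized vectors. You are also right that ``parallel'' must include negative multiples.
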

\begin{proof}
For each sensor, $A_{a,M}(z,x_i)$ is a polynomial in $z$, while $\rho_a$ has a polynomial-times-Gaussian density. Thus $B(Z)$ has finite moments of every finite order.

The constant coefficient of $A_{a,M}(z,x_i)$ is $h_0^a(x_i)>0$. Each polynomial is nonzero, and its zero set has Lebesgue measure zero. Absolute continuity of $\rho_a$ and finiteness of $N$ imply that all $A_{a,M}(Z,x_i)$ are nonzero almost surely.

For each such $z$, the real-linear map $\mathbf f\mapsto\xi(z,\mathbf f)$ is injective and preserves nonparallelness. Apply \cite[Theorem 3.1]{DU} to the normalized vectors $\xi_k(z)/\|\xi_k(z)\|_2$. Multiplication by their positive norms shows that the conditional Gaussian Gram matrix on the unnormalized $\xi_k(z)$ is positive definite. Its expectation is $H^\infty$ and is finite by the moment bound. Every nonzero vector has strictly positive conditional quadratic form almost surely, and hence strictly positive expected quadratic form. Since $K$ is finite, $\lambda_{\min}(H^\infty)>0$.

\end{proof}

\bibliographystyle{plain}
\bibliography{SegalBargmann}
\end{document}